\documentclass[10pt,twoside]{amsart}
\usepackage{amsmath,amssymb,amsthm}
\usepackage{mathrsfs}		
\usepackage{graphicx}
\usepackage[curve]{xypic}
\usepackage{leftidx}		

\usepackage[colorlinks=true, urlcolor=blue,bookmarks=true,bookmarksopen=true, citecolor=blue]{hyperref}

\addtolength{\textwidth}{2.5cm}
\addtolength{\hoffset}{-1.0cm}
\addtolength{\textheight}{1.0cm}
\addtolength{\topmargin}{-0.5cm}

\synctex=1
\numberwithin{equation}{section}

\newtheorem{theorem}{Theorem}[section]
\newtheorem{assumption}[theorem]{Assumption}

\newtheorem{corollary}[theorem]{Corollary}

\newtheorem{lemma}[theorem]{Lemma}
\newtheorem{prop}[theorem]{Proposition}
\newtheorem*{theorem*}{Theorem}
\newtheorem*{prop*}{Proposition}

\theoremstyle{remark}
\newtheorem{defn}[theorem]{Definition}
\newtheorem{example}[theorem]{Example}
\newtheorem{remark}[theorem]{Remark}

\newtheorem*{defn*}{Definition}
\newtheorem*{remark*}{Remark}

\def \begineq{\begin{equation}}
\def \endeq{\end{equation}}

\def \bb{\mathbb}

\def \mc{\mathcal}
\def \mf{\mathfrak}

\newcommand{\Cnabla}[1]{{\leftidx{^C}{\nabla}{^{#1}}}}

\newcommand{\half}[1]{\frac{#1}{2}}

\newcommand{\norm}[1]{{\left|\!\left|#1\right|\!\right|}}

\renewcommand{\tilde}{\widetilde}

\def \CC{{\bb{C}}}

\def \GG{{\bb G}}
\def \JJ{{\bb{J}}}

\def \PP{{\bb{P}}}

\def \RR{{\bb{R}}}

\def \TT{{\bb{T}}}

\def \ZZ{{\bb{Z}}}

\def \FFC{{\mc F}}

\def \LLC{{\mc L}}

\def \({\left(}
\def \){\right)}
\def \<{\left\langle}
\def \>{\right\rangle}

\def \bar{\overline}
\def \deg{\mathrm{deg}}
\def \dsum{\oplus}

\def \half{\frac{1}{2}}
\def \inter{\cap}
\def \into{\hookrightarrow}
\def \Laplacian{{\Delta}}

\def \tensor{\otimes}
\def \union{\cup}
\def \vargeq{\geqslant}
\def \varleq{\leqslant}
\def \weakto{\rightharpoonup}

\def \Ad{{\rm Ad}}

\def \End{{\rm End}}

\def \Herm{{\rm Herm}}

\def \id{{\rm Id}}
\def \img{{\rm img}}
\def \ind{{\rm ind}}

\def \rk{{\rm rk }}
\def \Span{{\rm Span}}

\def \tr{{\rm tr}}
\def \vol{{\rm vol}}
\def \Vol{{\rm Vol}}



\renewcommand{\1}{1\!\!1}

\def \qed{\hfill $\square$ \vspace{0.1in}}

\begin{document}

\title{A Kobayashi-Hitchin correspondence for $I_\pm$-holomorphic bundles}

\author{Shengda Hu}
\address{{\it Shengda Hu}: Department of Mathematics, Wilfrid Laurier University, 75 University Ave. West, Waterloo, Canada}
\email{shu@wlu.ca}
\author{Ruxandra Moraru}
\address{{\it Ruxandra Moraru}: Department of Pure Mathematics, University of Waterloo, 200 University Ave. West, Waterloo, Canada}
\email{moraru@math.uwaterloo.ca}
\author{Reza Seyyedali}
\address{{\it Reza Seyyedali}: Department of Pure Mathematics, University of Waterloo, 200 University Ave. West, Waterloo, Canada}
\email{rseyyeda@uwaterloo.ca}

\subjclass[2000]{14D21, 32G13, 32L05, 53C07, 53C55, 53D18}
\keywords{Hermitian manifolds, vector bundles, stability, Hermitian-Einstein metrics, Kobayashi-Hitchin correspondence, generalized geometries (\`a la Hitchin)}

\thanks{Shengda Hu was partly supported by an NSERC Discovery Grant. Ruxandra Moraru was partially supported by an NSERC Discovery Grant.}

\abstract
In this paper, we introduce the notions of $\alpha$-Hermitian-Einstein metric and $\alpha$-stability for $I_\pm$-holomorphic vector bundles on bi-Hermitian manifolds. 
Moreover, we establish a Kobayashi-Hitchin correspondence for $I_\pm$-holomorphic vector bundles on bi-Hermitian manifolds. 
Examples of such vector bundles include generalized holomorphic bundles on generalized K\"ahler manifolds. 
We also show that $\alpha$-stability of a vector bundle, in this sense, can depend on the parameter $\alpha$.
\endabstract

\maketitle

\section{Introduction}\label{sect:intro}

A {\em bi-Hermitian structure} on a manifold $M$ consists of a triple $(g,I_+,I_-)$ where $g$ is a Riemannian metric on $M$ and $I_\pm$ are integrable complex structures on $M$ that are both orthogonal with respect to $g$. Let $(M,g,I_+,I_-)$ be a bi-Hermitian manifold. In this paper, we study the stability properties of complex vector bundles on $M$  endowed with two holomorphic structures $\bar{\partial}_+$ and $\bar{\partial}_-$ with respect to the complex structure $I_+$ and $I_-$, respectively. Our motivation for studying such bundles comes from generalized complex geometry.

\subsection{Motivation}\label{subsect:motivation}
In generalized complex geometry, generalized holomorphic bundles are analogues of holomorphic vector bundles on complex manifolds, introduced by Gualtieri in \cite{Gualtieri0703} 
(see \S \ref{subsect:genholobundles} for a definition). For instance, on a complex manifold $M$, generalized holomorphic bundles correspond to co-Higgs bundles, which are pairs 
$(E, \varphi)$ consisting of a holomorphic vector bundle $E$ on $M$ and a holomorphic bundle map $\varphi: E \rightarrow E \otimes T_X$. Some of the general properties of co-Higgs bundles were studied by Hitchin in \cite{Hitchin11}. Moreover, moduli spaces of stable co-Higgs bundles were studied by Rayan on $\mathbb{P}^1$ \cite{Rayan10,Rayan11} and on  $\mathbb{P}^2$ \cite{Rayan13}, by Vicente-Colmenares on Hirzebruch surfaces \cite{Vicente-Colmenares}, and by Gualtieri-Hu-Moraru 
on Hopf surfaces \cite{Gualtieri-Hu-Moraru}. 

An interesting problem to consider is finding a good notion of stability for generalized holomorphic bundles on any generalized complex manifold. 
Given their relationship with bi-Hermitian geometry, a natural place to start is with generalized K\"ahler manifolds. 
Gualtieri has indeed shown \cite{Gualtieri04, Gualtieri10} that any generalized K\"ahler structure $(\JJ,\JJ')$ on a manifold $M$ is equivalent to a bi-Hermitian structure $(g,I_+,I_-)$ such that
\begin{equation}\label{eq:torsion}
d^c_+ \omega_+ = -d^c_- \omega_- = \gamma
\end{equation}
for some closed $3$-form $\gamma \in \Omega^3(M)$, where $\omega_\pm(\cdot,\cdot) = g(I_\pm \cdot,\cdot)$ are the fundamental 2-forms of $g$ and $d^c_\pm = I_\pm \circ d \circ I_\pm$ are the twisted differentials with respect to $I_\pm$. Let $T^{0,1}_\pm M$ be the anti-holomorphic cotangent bundles of $M$ with respect to $I_\pm$.  In this case, a {\em generalized holomorphic bundle} on $(M,\JJ)$ 
(or {\em $\JJ$-holomorphic bundle} on $M$) corresponds to a triple $(V,\bar\partial_+,\bar\partial_-)$ where $V$ is a complex vector bundle on $M$ together with holomorphic structures
\[\bar\partial_\pm : C^\infty(V) \to C^\infty(V \tensor T^{0,1}_\pm M),\] 
with respect to $I_\pm$, that satisfy a commutation relation (for details, see equation \eqref{eq:commutationrelation}).
Our goal is therefore to formulate a notion of stability for $\JJ$-holomorphic bundles in terms of the bi-Hermitian structure $(g,I_+.I_-)$, in a way that preserves properties of stable bundles on complex manifolds. This would, in particular,  give us stability with respect to any generalized complex structure $\JJ$ on $M$ that fits into a generalized K\"ahler pair $(\JJ,\JJ')$.

\subsection{Objects of study}\label{subsect:objects} We begin by considering a more general situation. Let $(M, g, I_+, I_-)$ be a compact bi-Hermitian manifold. Note that we are not assuming that $g$ satisfies equation \eqref{eq:torsion}  so that the bi-Hermitian structure $(g,I_+,I_-)$ may not come from a generalized K\"ahler structure. 
\begin{defn*}
 An \emph{$I_\pm$-holomorphic bundle} on $M$  consists of a triple $(V,\bar\partial_+,\bar\partial_-)$ where $V$ is a complex vector bundle on $M$ and $\bar\partial_\pm$ 
 are $I_\pm$-holomorphic structures on $V$. Moreover, we call the pair $(\bar\partial_+,\bar\partial_-)$ an \emph{$I_\pm$-holomorphic structure} on $V$.
\end{defn*}

\noindent
Consequently, on a generalized K\"ahler manifold $(M,\JJ,\JJ')$ with associated bi-Hermitian structure $(g,I_+,I_-)$, a $\JJ$-holomorphic bundle corresponds to an $I_\pm$-holomorphic bundle that satisfies a commutation relation (for details, see \S \ref{sect:genholobundles}).

The main result of this paper is a Kobayashi-Hitchin correspondence for $I_\pm$-holomorphic bundles. On a compact complex manifold endowed with a Gauduchon metric, the {\em Kobayashi-Hitchin correspondence} states that a holomorphic vector bundle admits a Hermitian-Einstein metric if and only if it is polystable.
This correspondence was first proven for Riemann surfaces by Narasimhan-Seshadri \cite{NarasimhanSeshadri}, then for K\"ahler manifolds by Donaldson-Uhlenbeck-Yau \cite{Donaldson1983, Donaldson1985, UhlenbeckYau86, Donaldson1987} and for complex manifolds with Gauduchon metrics by Buchdahl-Li-Yau \cite{Buchdahl, Li-Yau}.  We refer the reader to \cite{LubkeTeleman1995} for a general reference on the Kobayashi-Hitchin correspondence. Before stating the correspondence for bi-Hermitian manifolds, let us first describe what we mean by Hermitian-Einstein and polystable in this context.

\smallskip
In the remainder, we make the following assumption on the metric $g$.
\begin{assumption}\label{assump:Gauduchon}
The metric $g$ is Gauduchon with respect to both $I_+$ and $I_-$, that is, $dd^c_\pm \omega_\pm = 0$, and $\vol_g = \frac{1}{n!} \omega_\pm^n$.
 \end{assumption}
 \noindent
We note that this assumption is not too restrictive. It is automatically satisfied on {\em even} generalized K\"ahler 4-manifolds. Moreover, such metrics exist in higher dimension, for example on real compact Lie groups \cite{HuPre}. The assumption is, however, essential to our discussion of polystability.

\smallskip
Let $(V,\bar\partial_+, \bar\partial_-)$ be an $I_\pm$-holomorphic bundle on $M$ and $h$ be a Hermitian metric on $V$. Let $F_\pm$ be the curvatures of the Chern connections $\nabla^C_{\pm}$ on $V$ corresponding to the holomorphic structures $\bar\partial_\pm$. Motivated by Hitchin \cite{Hitchin11}, we introduce the following \emph{$\alpha$-Hermitian-Einstein equation}, where $\alpha \in (0,1)$ and $\lambda \in\RR$:
\begin{equation}\label{eq:alphaHEintro}
 \sqrt{-1}(\alpha F_+\wedge \omega_+^{n-1} + (1-\alpha) F_-\wedge \omega_-^{n-1}) = (n-1)!\lambda \id_V \vol_g.
\end{equation}
\noindent
Note that this equation is a natural generalisation of the Hermitian-Einstein equation on a complex manifold, endowed with a Gauduchon metric, to the bi-Hermitian setting; as in the complex case, we have:
\begin{defn*}
The Hermitian metric $h$ on $V$ is called \emph{$\alpha$-Hermitian-Einstein} if the corresponding pair $(\nabla^C_+, \nabla^C_-)$ of Chern connections satisfies the $\alpha$-Hermitian-Einstein equation \eqref{eq:alphaHEintro}.
\end{defn*}

Now, for polystability, we need notions of degree and of coherent subsheaf of an $I_\pm$-holomorphic bundle $(V,\bar\partial_+,\bar\partial_-)$. First note that, since we are assuming $g$ to be 
Gauduchon with respect to both $I_+$ and $I_-$, we can associate to $V$ two degrees $\deg_\pm(V)$ and two slopes $\mu_\pm(V)$ in the standard way:
 \[\deg_\pm(V) =\frac{\sqrt{-1}}{2\pi} \int_M \tr(F_\pm) \wedge \omega_\pm^{n-1}\] 
 and 
 \[\mu_\pm(V) = \frac{\deg_\pm(V)}{\rk V}.\] 
We then define the {\em $\alpha$-degree $\deg_\alpha(V)$} and the {\em $\alpha$-slope $\mu_\alpha(V)$} of $(V,\bar\partial_+,\bar\partial_-)$, for any $\alpha \in (0,1)$, as
\[ \deg_\alpha(V) := \alpha \deg_+(V)+ (1-\alpha) \deg_- (V)\]
and
\[\mu_\alpha(V) := \alpha \mu_+(V) + (1-\alpha)\mu_-(V),\]
respectively. As for coherent subsheaves of $(V,\bar\partial_+,\bar\partial_-)$, they are defined in \S \ref{sect:biholbundles} (see definition \ref{defn:coherentsheaves}).

We are now in a position to define the analogue of classical slope stability (Mumford \cite{Mumford63}) for $I_\pm$-holomorphic and $\JJ$-holomorphic bundles.
\begin{defn*}
Let $\alpha \in (0,1)$.
The $I_\pm$-holomorphic structure  $(\bar\partial_+, \bar\partial_-)$ on $V$ is called \emph{$\alpha$-stable} if, for any proper coherent subsheaf $\FFC$ of 
$(V,\bar\partial_+,\bar\partial_-)$, we have $\mu_\alpha(\FFC) < \mu_\alpha(V)$.  Furthermore,  $(\bar\partial_+, \bar\partial_-)$ is said to be \emph{$\alpha$-polystable} if it is a direct sum of $\alpha$-stable 
bundles with the same $\alpha$-slope. 
In addition, a $\JJ$-holomorphic bundle $(V,\bar\partial_+, \bar\partial_-)$ is called \emph{$\alpha$-(poly)stable} if the corresponding $I_\pm$-holomorphic structure $(\bar\partial_+, \bar\partial_-)$  is.
\end{defn*}

\subsection{Main result}
  
We can now give a precise statement of the main result of the paper.
 \begin{theorem*}[Theorem \ref{thm:setKHcorrespondence}]
 Let $(M,g,I_+,I_-)$ be a compact bi-Hermitian manifold such that $g$ is Gauduchon with respect to both $I_+$ and $I_-$, and $\vol_g = \frac{1}{n!} \omega_\pm^n$. 
 Moreover, let $(V, \bar\partial_+, \bar\partial_-)$ be an $I_\pm$-holomorphic bundle on $M$. 
 Then, $(V, \bar\partial_+, \bar\partial_-)$ admits an $\alpha$-Hermitian-Einstein metric if and only if it is $\alpha$-polystable, for any $\alpha \in (0,1)$.
 \end{theorem*}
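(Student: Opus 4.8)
The plan is to follow the classical Donaldson--Uhlenbeck--Yau strategy, as adapted to Gauduchon metrics by Buchdahl, Li--Yau and L\"ubke--Teleman, the one genuinely new feature being that the two Chern connections are coupled through a single metric $h$. Taking the trace of \eqref{eq:alphaHEintro} and integrating over $M$, the hypothesis $\vol_g=\frac1{n!}\omega_\pm^n$ forces
\begin{equation*}
\lambda=\frac{2\pi\,\mu_\alpha(V)}{(n-1)!\,\Vol(M)},
\end{equation*}
so the Einstein constant is pinned down by the $\alpha$-slope; this is exactly where the volume-compatibility assumption enters, since it lets the $\omega_+$- and $\omega_-$-contributions be integrated against the same measure. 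I would treat the two implications separately.

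For the direction \emph{$\alpha$-Hermitian-Einstein $\Rightarrow$ $\alpha$-polystable}, I would fix an $\alpha$-Hermitian-Einstein metric $h$ and a proper coherent subsheaf $\FFC$ of $(V,\bar\partial_+,\bar\partial_-)$. Away from a codimension-two analytic set $\FFC$ is a subbundle to which $h$ restricts, and the Chern--Weil formula for each $I_\pm$ expresses $\deg_\pm(\FFC)$ in terms of the induced mean curvature minus the $L^2$-norm of the second fundamental form $\beta_\pm$. Forming the $\alpha$-weighted combination and substituting \eqref{eq:alphaHEintro} yields
\begin{equation*}
\mu_\alpha(\FFC)\le\mu_\alpha(V)-\frac{1}{\text{const}\cdot\rk\FFC}\int_M\big(\alpha\|\beta_+\|^2+(1-\alpha)\|\beta_-\|^2\big)\,\vol_g,
\end{equation*}
so $\mu_\alpha(\FFC)\le\mu_\alpha(V)$, with equality only if $\beta_+=\beta_-=0$; in the equality case $\FFC$ splits off $h$-orthogonally as a subbundle holomorphic for both $\bar\partial_\pm$, and iterating produces the decomposition into $\alpha$-stable summands of equal $\alpha$-slope.

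For the converse it suffices, by the splitting just described, to produce the metric on an $\alpha$-stable bundle and then take the block-diagonal metric in the polystable case. I would run the continuity method on the perturbed equation $K_\alpha(h)=\lambda\,\id_V+\epsilon\,\log(h\,h_0^{-1})$, where $K_\alpha(h)$ is the endomorphism given by the left-hand side of \eqref{eq:alphaHEintro} divided by $\vol_g$ and $h_0$ is a reference metric. The decisive structural point is that the linearization of $h\mapsto K_\alpha(h)$ has leading part $\alpha\Delta_++(1-\alpha)\Delta_-$, a convex combination of the two $I_\pm$-Laplacians; since $\alpha\in(0,1)$ this is a genuinely elliptic self-adjoint operator with positive symbol, which yields solvability for $\epsilon>0$ via the implicit function theorem and supplies the maximum principle used below. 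Openness of the set of solvable $\epsilon$ is then immediate.

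The hard part will be closedness as $\epsilon\to0$. The $C^0$-estimate follows from the maximum principle for $\alpha\Delta_++(1-\alpha)\Delta_-$ together with the integral identity fixing $\lambda$, and higher-order bounds follow from elliptic regularity for this combined operator; if the a priori bounds are uniform, the solutions converge to an $\alpha$-Hermitian-Einstein metric. The genuine obstacle is the case in which the $C^0$-norms of $s_\epsilon=\log(h_\epsilon h_0^{-1})$ blow up: following Uhlenbeck--Yau one renormalizes, extracts a weak $L^2_1$-limit, and builds from its spectral projections a weakly holomorphic projection $\pi$. Here the bi-Hermitian structure intervenes crucially, because the controlled quantity is the \emph{weighted} energy $\alpha\|\bar\partial_+\pi\|^2+(1-\alpha)\|\bar\partial_-\pi\|^2$; as both weights are strictly positive, $\pi$ is weakly holomorphic for $\bar\partial_+$ \emph{and} for $\bar\partial_-$ simultaneously. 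Applying the Uhlenbeck--Yau regularity theorem to each structure then shows that $\pi$ cuts out a genuine coherent subsheaf of $(V,\bar\partial_+,\bar\partial_-)$ in the sense of Definition \ref{defn:coherentsheaves}, and the renormalized estimate forces $\mu_\alpha(\FFC)\ge\mu_\alpha(V)$, contradicting $\alpha$-stability. Verifying that the limiting projection is simultaneously holomorphic for both operators, and that the Gauduchon degree theory behaves well for these combined subsheaves, is the crux of the whole argument.
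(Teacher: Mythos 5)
Your proposal is correct in outline and, for the hard direction, is essentially the paper's own proof: the same perturbed equation $L^\alpha_\varepsilon(f)=K^0_\alpha+\sqrt{-1}\,\Lambda^\alpha\bar\partial(f^{-1}\partial_0 f)+\varepsilon\log f=0$, openness via the implicit function theorem for the elliptic operator $P^\alpha=\alpha P_++(1-\alpha)P_-$, and, in the blow-up case, the Uhlenbeck--Yau construction of a projection $\pi$ whose weighted energy $\alpha\norm{\bar\partial_+\pi}^2+(1-\alpha)\norm{\bar\partial_-\pi}^2$ vanishes, so that (both weights being strictly positive) $\pi$ is weakly holomorphic for $\bar\partial_+$ and $\bar\partial_-$ simultaneously and cuts out a destabilizing pair $(\FFC_+,\FFC_-)$ in the sense of Definition \ref{defn:coherentsheaves}; this is exactly Proposition \ref{prop:destabilizing}, and it is precisely the origin of that definition of coherent subsheaf. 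Where you genuinely diverge is the easier direction: you run the Chern--Weil/second-fundamental-form argument, bounding $\mu_\alpha(\FFC)$ by $\mu_\alpha(V)$ minus the weighted $L^2$-norm of $(\beta_+,\beta_-)$, whereas the paper follows L\"ubke--Teleman's vanishing-theorem route: a coherent subsheaf induces a nonzero $I_\pm$-holomorphic section of $\wedge^{r_{\FFC}}V\tensor\det\FFC^*$, and the sign of the Einstein constant of that bundle, via Theorem \ref{thm:vanishing}, forces $\mu_\alpha(\FFC)\varleq\mu_\alpha(V)$, the equality/parallel-section case giving the polystable splitting (Theorem \ref{thm:HEtostable} and Corollary \ref{coro:injectivityofbarK}). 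Both are standard and both work here; your route is more directly geometric but must justify integrability of $|\beta_\pm|^2$ across the singular sets $S_\pm$ of the subsheaf pair, while the paper's route recycles the vanishing theorem it needs anyway and treats the pair of Chern connections uniformly.

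One caveat on your closedness step: the claim that ``higher-order bounds follow from elliptic regularity for this combined operator'' hides the one place where stability, through simplicity, is actually used at positive $\varepsilon$. The paper's a priori $L^p_2$ bound (Proposition \ref{prop:fLp2boundsbyf1}) is not obtained by pure elliptic bootstrapping from the $C^0$ bound $m\varleq m_K/\varepsilon_0$: it bounds $\phi_\varepsilon=df_\varepsilon/d\varepsilon$ using a Poincar\'e-type inequality for $\Laplacian_{\bar\partial,\alpha}=\alpha\Laplacian_{\bar\partial_+}+(1-\alpha)\Laplacian_{\bar\partial_-}$ (Lemma \ref{lemma:boundetasquare}), whose kernel consists of multiples of $\id_V$ precisely because $(V,\bar\partial_+,\bar\partial_-)$ is simple (Definition \ref{defn:simpleness}, guaranteed by $\alpha$-stability via Lemma \ref{lemma:stabilityimpliessimplicity}), and then integrates in $\varepsilon$. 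Without this, or an equivalent hard estimate, uniform $C^0$ control alone does not deliver the $L^p_2$ compactness needed for closedness of $J$ in $(0,1]$; you should make simplicity an explicit ingredient of that step.
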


As a corollary, we obtain a Kobayashi-Hitchin correspondence for generalized holomorphic bundles on generalized K\"ahler manifolds.
\begin{prop*}[Corollary \ref{coro:generalizeKahlerKH}]
Let $(M,\JJ,\JJ')$ be a compact generalized K\"ahler manifold whose associated bi-Hermitian structure $(g,I_+,I_-)$ is such that $g$ is Gauduchon with respect to both $I_+$ and $I_-$, 
and $\vol_g = \frac{1}{n!} \omega_\pm^n$. Moreover, let $(V, \bar\partial_+, \bar\partial_-)$ be a $\JJ$-holomorphic bundle on $M$. 
Then, $(V, \bar\partial_+, \bar\partial_-)$ admits an $\alpha$-Hermitian-Hermitian metric if and only if it is $\alpha$-polystable, for any $\alpha \in (0,1)$.
\end{prop*}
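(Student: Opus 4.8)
The plan is to deduce the corollary directly from Theorem~\ref{thm:setKHcorrespondence} by translating the generalized K\"ahler data into bi-Hermitian data and observing that the generalized K\"ahler setting is a special case of the one already treated. First I would invoke Gualtieri's equivalence \cite{Gualtieri04, Gualtieri10}: a generalized K\"ahler structure $(\JJ,\JJ')$ on $M$ is the same as a bi-Hermitian structure $(g,I_+,I_-)$ satisfying the torsion condition \eqref{eq:torsion}. In particular $(M,g,I_+,I_-)$ is a genuine compact bi-Hermitian manifold, and the two hypotheses imposed in the corollary---that $g$ be Gauduchon with respect to both $I_+$ and $I_-$ and that $\vol_g = \frac{1}{n!}\omega_\pm^n$---are precisely Assumption~\ref{assump:Gauduchon}, under which the main theorem is proved. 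Note that the torsion condition \eqref{eq:torsion} plays no further role: Theorem~\ref{thm:setKHcorrespondence} is established without assuming it, so it applies \emph{a fortiori} to bi-Hermitian structures coming from a generalized K\"ahler pair.

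Next I would unwind the definition of a $\JJ$-holomorphic bundle. By construction (see \S\ref{sect:genholobundles}), such a bundle is a triple $(V,\bar\partial_+,\bar\partial_-)$ which is an $I_\pm$-holomorphic bundle in the sense of \S\ref{subsect:objects}, subject to the additional commutation relation \eqref{eq:commutationrelation} between $\bar\partial_+$ and $\bar\partial_-$; forgetting this relation yields an honest $I_\pm$-holomorphic bundle. Moreover, the notions of $\alpha$-(poly)stability and of $\alpha$-Hermitian-Einstein metric for a $\JJ$-holomorphic bundle were defined verbatim as the corresponding notions for its underlying $I_\pm$-holomorphic structure: the $\alpha$-slope $\mu_\alpha$ depends on $(\bar\partial_+,\bar\partial_-)$ only through the curvatures $F_\pm$, the competing coherent subsheaves $\FFC$ are those of the $I_\pm$-holomorphic bundle, and the $\alpha$-Hermitian-Einstein equation \eqref{eq:alphaHEintro} is a condition on the metric alone.

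With this dictionary in place, the corollary is an immediate application of Theorem~\ref{thm:setKHcorrespondence} to the underlying $I_\pm$-holomorphic bundle: for any $\alpha \in (0,1)$ it admits an $\alpha$-Hermitian-Einstein metric if and only if it is $\alpha$-polystable. I expect essentially no obstacle, since all the analytic content is contained in the main theorem; the only point requiring a word is that passing between the $\JJ$-holomorphic and $I_\pm$-holomorphic pictures loses nothing relevant. Indeed, the metric produced by the theorem is simply a Hermitian metric on $V$, and since the commutation relation \eqref{eq:commutationrelation} constrains only the holomorphic structures $\bar\partial_\pm$ and not $h$, it serves equally well as an $\alpha$-Hermitian-Einstein metric for the $\JJ$-holomorphic bundle; conversely, $\alpha$-polystability was defined through the underlying $I_\pm$-structure, so no further verification is needed. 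This completes the reduction.
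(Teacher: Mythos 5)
Your proposal is correct and follows exactly the paper's own route: the corollary is deduced immediately from Theorem \ref{thm:setKHcorrespondence} by noting that a $\JJ$-holomorphic bundle is an $I_\pm$-holomorphic bundle satisfying the commutation relation \eqref{eq:commutationrelation} (Proposition \ref{prop:bihermitianholobundle}), that $\alpha$-(poly)stability and the $\alpha$-Hermitian-Einstein condition for $\JJ$-holomorphic bundles are defined through the underlying $I_\pm$-structure (Definition \ref{defn:alphastablegenholo}), and that the corollary's hypotheses are precisely Assumption \ref{assump:Gauduchon}. Nothing is missing.
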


\subsection{Plan of the paper}
We begin by providing some basic facts about generalized K\"ahler geometry and generalized holomorphic bundles in section \S \ref{subsect:gengeom}, 
and introduce the notion of $I_\pm$-holomorphic bundles on bi-Hermitian manifolds in section \S \ref{subsect:genholobundles}. We then describe how these bundles relate to generalized holomorphic bundles on generalized K\"ahler manifolds in section \S \ref{subsect:genholocommutativity}.

The notions of $\alpha$-Hermitian-Einstein metric and $\alpha$-stability for $I_\pm$-holomorphic and generalized holomorphic bundles are introduced in section \S \ref{sect:biholbundles},
and some of their properties are discussed in sections \ref{sect:biholbundles} and \ref{generalizedholbun}. 

In section \S \ref{sect:Hopfexample}, we consider $I_\pm$-holomorphic and generalized holomorphic bundles on Hopf surfaces. 
These 4-manifolds admit a natural bi-Hermitian structure that corresponds to an even generalized K\"ahler structure
and thus satisfies Assumption \ref{assump:Gauduchon} (see section \S \ref{subsect:bihermitian}). 
We study line bundles with respect to this bi-Hermitian structure in section \S \ref{subsect:linebundleonHopf}, in particular showing that generalized holomorphic line bundles always exist. 
We then consider rank-2 bundles in \S \ref{subsect:rank2onhopf}, giving examples of both $\alpha$-stable and $\alpha$-unstable
$I_\pm$-holomorphic and generalized holomorphic rank-2 bundles. In particular, we show that $\alpha$-stability can depend on the choice of $\alpha$ (see examples \ref{dependence-alpha:c2=0} and 
\ref{dependence-alpha:c2=1}). 

The main result of the paper (Theorem \ref{thm:setKHcorrespondence}) is proven in section \S \ref{sect:correspondenceforIpm}, where we follow closely the presentation of Chapters 2  and 3 in \cite{LubkeTeleman1995}. The deformation theory of $I_\pm$-holomorphic and generalized holomorphic bundles will appear elsewhere \cite{HuMoraruSeyyedali}.
\medskip

\noindent
{\bf Convention.} We use the following notation throughout the paper. Let $M$ be a manifold endowed with a complex structure $I$. Then, $T_{1,0}M$ and $T_{0,1}M$ denote the bundles of holomorphic and anti-holomorphic vector fields on $M$, respectively. Furthermore, $T^{1,0}M$ and $T^{0,1}M$ denote the bundles of holomorphic and anti-holomorphic forms on $M$, respectively.

\medskip

{\bf Acknowledgements.} The authors would like to thank Marco Gualtieri and Nigel Hitchin for their interest in this work and for helpful discussions. Moreover, Shengda Hu would like to thank Bohui Chen for helpful discussions and the Yangtze Center of Mathematics at Sichuan University for their hospitality where part of research was done. 


\section{Generalized holomorphic bundles}\label{sect:genholobundles}
In this section, we give some basic facts on generalized K\"ahler geometry and generalized holomorphic bundles. More details can be found in several of Gualtieri's papers \cite{Gualtieri04, Gualtieri0703, Gualtieri0710, Gualtieri10}. We also discuss the relationship between generalized holomorphic bundles on generalized K\"ahler manifolds and 
$I_\pm$-holomorphic bundles on bi-Hermitian manifolds in sections \S \ref{subsect:genholobundles} and \S \ref{subsect:genholocommutativity}.

\subsection{Generalized geometry}\label{subsect:gengeom}
Let $M$ be a smooth $2n$-manifold, $\gamma \in \Omega^3(M)$ be a closed $3$-form, and $g$ be a Riemannian metric on $M$. 
Consider the generalized tangent bundle $\TT M = TM \dsum T^*M$, which admits a Courant-Dorfman bracket defined by $\gamma$:
$$(X+\xi)*(Y+\eta) = [X,Y] + \LLC_X\eta - \iota_Y d\xi + \iota_X\iota_Y \gamma.$$
The natural projection $\TT M \to TM$ is denoted $a$.
Recall that a \emph{generalized almost complex structure} on $M$ is an endomorphism $\JJ : \TT M \to \TT M$ that satisfies $\JJ^2 = -\1$ and is orthogonal with respect to the natural pairing
$$\<X+\xi, Y+\eta\> = \frac{1}{2}(\iota_X\eta + \iota_Y \xi).$$
Extend $*$, $\JJ$ and $\<,\>$ complex-linearly to $\TT_\CC M = \TT M\tensor_\RR \CC$.
The $i$-eigenbundle $L$ of $\JJ$ in $\TT_\CC M$ is then maximally isotropic with respect to the pairing $\<,\>$. 
Moreover, the structure $\JJ$ is \emph{integrable} and called a \emph{generalized complex structure} if, in addition, $L$ is involutive with respect to the Courant-Dorfman bracket.
Here are two basic examples of generalized complex structures.

\begin{example}\label{ex:complex}
Let $(M,I)$ be a complex manifold. The complex structure $I$ induces a natural generalized complex structure on $M$ given by
$$\JJ_I =
\begin{pmatrix}
 I & 0 \\ 0 & -I^\ast
\end{pmatrix}.
$$
\end{example}

\begin{example}\label{ex:symplectic}
Let $(M,\omega)$ be a symplectic manifold. The symplectic structure $\omega$ induces the following natural generalized complex structure 
$$\JJ_\omega =
\begin{pmatrix}
 0 & -\omega^{-1} \\ \omega & 0
\end{pmatrix}
$$
on $M$.
\end{example}

\noindent
There are nonetheless many examples of generalized complex structures that do not arise from a complex structure or a symplectic structure. 
See for example \cite{Gualtieri0710, Gualtieri0703}.

\medskip

A pair of generalized complex structures $(\JJ, \JJ')$ on $M$ defines a \emph{generalized K\"ahler structure} if they commute (that is, $\JJ\JJ' = \JJ'\JJ$) and are such that
$\GG : = -\JJ\JJ'$ is a positive definite metric on $\TT M$, called the {\em generalized K\"ahler metric}.
(In other words, the symmetric pairing
\[ G(X+\xi,Y+\eta) := \langle \GG(X+\xi),(Y+\eta) \rangle \] 
is positive definite on $\TT M$.)

\begin{example}\label{ex:Kahler}
Let $(M,I,g)$ be a K\"ahler manifold with fundamental form $\omega$, which is symplectic. We then have two natural generalized complex structures on $M$, namely, the generalized
complex structures $\JJ_I$ and $\JJ_\omega$ defined in examples \ref{ex:complex} and \ref{ex:symplectic}, respectively. In fact, the pair $(\JJ_I,\JJ_\omega)$ defines a generalized K\"ahler structure
on $M$ with generalized K\"ahler metric
$$G = 
\begin{pmatrix}
 0 & g^{-1} \\ g & 0
\end{pmatrix}.
$$
Such a generalized K\"ahler structure is called {\em trivial}.
\end{example}

\begin{remark*}
K\"ahler manifolds can admit non-trivial generalized K\"ahler structures 
\cite{Gualtieri04,Gualtieri10}. On the other hand, manifolds that do not admit K\"ahler metrics can nonetheless  sometimes admit generalized K\"ahler structures, as illustrated by the Hopf surface (see section \S \ref{sect:Hopfexample}).
\end{remark*}

We now explain how generalized K\"ahler geometry is related to bi-Hermitian geometry. 
Let $(\JJ, \JJ')$ be a generalized K\"ahler structure on $M$. It induces a bi-Hermitian structure $(g,I_+,I_-)$ on $M$ such that $d^c_\pm\omega_\pm = \pm\gamma$ as follows.
Note that $\GG^2 = \1$ so that its eigenvalues are $\pm1$.
 Let $C_{\pm}$ be the $\pm 1$-eigenbundles of $\GG$ in $\TT M$.
The projections $a_\pm :=a|_{C_\pm}: C_{\pm} \subset \mathbb{T}M \rightarrow TM$ are then isomorphisms. 
We set
\[ g(X,Y) := \pm \langle a_\pm^{-1}(X),a_\pm^{-1}(Y) \rangle\]
for all $X,Y \in C^\infty(TM)$.
Then, $g$ is a Riemannian metric on $M$ such that
$$C_\pm = \{ X \pm g(X) : X \in TM \}.$$ 
In other words, $C_\pm$ are the graphs of $\pm g$. In addition, since $\mathbb{J}$ preserves $C_\pm$, 
we can define complex structures $I_\pm$ on $M$ by restricting $\JJ$ to $C_\pm$. 
Concretely, we have
$$I_\pm(X) := a \left(\JJ (X \pm g(X))\right),$$
for $X \in C^\infty(TM)$.
Then, $I_\pm$ are clearly orthogonal with respect to $g$ and $d^c_\pm \omega_\pm = \pm \gamma$.

To see why $d^c_\pm \omega_\pm = \pm \gamma$, we first note that we have the following decompositions:
$$\TT_\CC M = C_+ \dsum C_- = L \dsum \bar L = L' \dsum \bar L' = \ell_+ \dsum \ell_- \dsum \bar \ell_+ \dsum \ell_-,$$
where $L$ and $L'$ are the $i$-eigenbundles of $\JJ$ and $\JJ'$, respectively, and 
\[ \mbox{$\ell_+ := L \inter L'$ and $\ell_- := L \inter \bar L'$.} \] 
We then have $C_\pm = \ell_\pm \dsum \bar\ell_\pm$, implying that $a$ maps  $\ell_\pm$ isomorphically onto the holomorphic tangent bundles $T^\pm_{1,0} M$ of $(M,I_\pm)$.
Thus,
\begin{equation}\label{ellpm}
\ell_\pm = \{X \pm g(X) = X \mp i\iota_X\omega_\pm : X \in T^\pm_{1,0} M\}.
\end{equation}
Furthermore, $\ell_\pm$ are involutive with respect to the Courant-Dorfman bracket because $L$ and $L'$ are.
The condition that $\ell_\pm$ are involutive is equivalent to
$$(X \mp i\iota_X \omega_\pm) * (Y \mp i\iota_Y \omega_\pm) = [X, Y] \mp i(\LLC_{X}\iota_Y\omega_\pm - \iota_{Y}d\iota_X \omega_\pm) + \iota_X\iota_Y \gamma = [X, Y] \mp i\iota_{[X,Y]} \omega_\pm,$$
which implies that
$$\pm i \iota_X\iota_Y d\omega_\pm + \iota_X\iota_Y \gamma = 0 \iff \mp i \left(d\omega_\pm\right)^{2,1}_\pm = \mp i \partial_\pm \omega_\pm= \gamma^{(2,1) + (3,0)}_\pm.$$
Since $\omega_\pm$ and $\gamma$ are both real forms, we have
$$\gamma = \mp i (\partial_\pm \omega_\pm - \bar\partial_\pm \omega_\pm) = \pm d^c_\pm \omega_\pm.$$

Conversely, given a bi-Hermitian manifold $(M,g,I_+,I_-)$ with $d^c_\pm\omega_\pm = \pm \gamma$ for some closed 3-form $\gamma$, we recover the generalized K\"ahler structure $(\mathbb{J},\mathbb{J}')$ as
\[ \JJ / \JJ' = \frac{1}{2}\begin{pmatrix}
					I_+ \pm I_- & -(\omega_+^{-1} \mp \omega_-^{-1})\\
					\omega_+ \mp \omega_- & -(I_+^* \pm I_- ^*)
					\end{pmatrix}.\]
We thus have a one-to-one correspondence between generalized K\"ahler structures and bi-Hermitian structures with torsion \cite{Gualtieri04, Gualtieri10}. In the rest of this paper, we move interchangeably between
a given generalized K\"ahler structure $(\JJ,\JJ')$ and its associated bi-Hermitian structure $(M,g,I_+,I_-)$.

\begin{example}\label{ex:Kahler-biherm}
Let $(M,I,g)$ be a K\"ahler manifold. The bi-Hermitian structure corresponding to the trivial generalized K\"ahler structure $(\mathbb{J}_I,\mathbb{J}_\omega)$ constructed in example
\ref{ex:Kahler} is then $(g,I_+=I_- = I)$.
\end{example}

\begin{remark*}
The generalized K\"ahler structure on Hopf surfaces given in section \S \ref{sect:Hopfexample} is described in terms of its associated bi-Hermitian structure.
\end{remark*}

We end this section with two definitions that will come up in the sequel.

\begin{defn}\label{defn:evenGK}
A generalized K\"ahler structure $(\JJ,\JJ')$ on $M$ is said to be {\em even} if both $a(L)$ and $a(L')$ have even codimension in $TM \otimes \CC$, where $L$ and $L'$ are the $i$-eigenbundles of
$\JJ$ and $\JJ'$, respectively.
\end{defn}

\noindent
Note that, if $M$ is real $4k$-dimensional and the generalized K\"ahler structure $(\JJ,\JJ')$ is even, then its associated bi-Hermitian structure $(g,I_+,I_-)$ is such that $\vol_g = \frac{1}{n!} \omega_\pm^n$
(see Remark 6.14 in \cite{Gualtieri04}). These manifolds therefore satisfy Assumption \ref{assump:Gauduchon}. 
The generalized K\"ahler structure on Hopf surfaces described in section \S \ref{subsect:bihermitian} is an example of even structure on a 4-manifold.

\smallskip
Recall that the \emph{Bismut connection} on a Hermitian manifold is the unique connection that is compatible with both the Riemannian metric 
and the complex structure and has totally skew-symmetric torsion. 
Let $(\JJ,\JJ')$ be a generalized K\"ahler structure on $M$ with associated bi-Hermitian structure $(g,I_+,I_-)$. 
We denote $\nabla^\pm$ the Bismut connections of $(M,g,I_\pm)$, so that $\nabla^\pm g = 0 \text{ and } \nabla^\pm I_\pm = 0$.
The connections $\nabla^\pm$ can then be expressed in terms of the Courant-Dorfman bracket as follows. Let $X, Y \in C^\infty(TM)$, then
\begin{equation}\label{Bismut}
\nabla^\pm_X Y := a\left(\left[(X\mp g(X))*(Y\pm g(Y))\right]^{\pm}\right),
\end{equation}
where $\bullet^\pm$ denote projections to $C_\pm$, respectively (see \cite{Gualtieri0710}, section \S 3, for details).
In the sequel, we also use $\nabla^\pm$ to denote the induced Bismut connection on $T^*M$.
Note that since the Bismut connections $\nabla^\pm$ preserve the complex structures $I_\pm$, respectively, they also preserve the anti-holomorphic cotangent bundles $T^{0,1}_\pm M$, respectively.

\subsection{Holomorphic bundles}\label{subsect:genholobundles}
Let $(M,\JJ)$ be a generalized complex manifold with $i$-eigenbundle $L$, and let $V$ be a complex vector bundle on $M$. 
\begin{defn}
A {\em generalized holomorphic structure} or \emph{$\JJ$-holomorphic structure} on $V$ is a flat 
$L$-connection $\bar {\mc D}$ on $V$. More explicitly, it is a derivation 
\[ \bar {\mc D} : C^\infty(V) \to C^\infty(V \tensor \bar L^*)\] 
such that $\bar{\mc D}^2 = 0$. The pair $(V, \bar {\mc D})$ is then called a {\em generalized holomorphic bundle} or {\em $\JJ$-holomorphic bundle} on $(M,\JJ)$.
\end{defn}

\begin{remark}
Note that the Leibniz Rule is given, in this case, by
\[ \bar{\mc D}(fs) = d_{\bar L}(f) \otimes s + f\bar{\mc D}(s),\]
where $d_L$ is the Lie algebroid differential of $(\bar L,*)$ (see \cite{Gualtieri04}, Definition 3.7). 
For example, 
$$d_{\bar L}(f)(v) := a(v)(f),$$ for all $v \in C^\infty(\bar L)$.
\end{remark}

Generalized holomorphic bundles were introduced by Gualtieri \cite{Gualtieri0703} and some of their properties were studied by Hitchin \cite{Hitchin11}.
Here are some examples.

\begin{example}
Let $(M,\omega)$ be symplectic manifold. In this case, $\JJ_\omega$-holomorphic bundles correspond to flat bundles where $\JJ_\omega$ is the generalized complex structure 
given in example \ref{ex:symplectic}.
\end{example}

\begin{example}\label{co-Higgs}
Let $(M,I)$ complex manifold and consider the generalized complex structure $\mathbb{J}_I $ defined in example \ref{ex:complex}.
Then $\bar L = T_{0,1} M \oplus T^{1,0} M$. A $\mathbb{J}_I$-holomorphic structure on $V$ is then a derivation
\[\bar{\mc D} = \bar{\partial} + \phi: C^\infty(V) \rightarrow C^\infty(V \otimes T^{0,1}M) \oplus C^\infty(V \otimes T_{1,0}M)\] such that
$\bar{\mc D}^2 = 0$, which is equivalent to $\bar{\partial}^2 = 0$, $\bar{\partial} \phi = 0$ and $\phi \wedge \phi = 0$. In other words,
\[ \bar{\partial}: C^\infty(V) \rightarrow C^\infty(V \otimes T^{0,1}M) \]
is a holomorphic structure on $V$, with respect to the complex structure $I$, and 
\[ \phi: C^\infty(V) \rightarrow C^\infty(V \otimes T_{1,0}M)\] 
is a holomorphic section of $\End V \otimes T_{1,0}M$ satisfying the integrability condition $\phi \wedge \phi = 0$.
Hence, $\mathbb{J}_I$-holomorphic bundles correspond to co-Higgs-bundles $(V,\bar{\partial},\phi)$ with $\phi$ the {\em Higgs field}.
\end{example}


For the remainder of the paper, we assume that the generalized complex structure $\JJ$ is part of a generalized K\"ahler pair $(\JJ,\JJ')$ on $M$ 
with associated bi-Hermitian structure $(g,I_+.I_-)$. Referring to section \S \ref{subsect:gengeom}, we have $\bar L = \bar \ell_+ \oplus \bar \ell_-$.
Similarly to generalized holomorphic structures, 
we can define flat $\bar \ell_\pm$-connections on $V$ as derivations 
\[ \bar {\mc D}_\pm : C^\infty(V) \to C^\infty(V \tensor \bar \ell_\pm^*)\]
such that $\bar{\mc D}_\pm^2 = 0$. In this case, any $\JJ$-holomorphic structure $ \bar {\mc D}$ can be written as a sum
\[  \bar {\mc D} =  \bar {\mc D}_+ +  \bar {\mc D}_- \]
of $\bar\ell_\pm$-connections $\bar{\mc D}_\pm$ on $V$. By composing $\bar{\mc D}_\pm$ with the isomorphisms $a : \bar\ell_\pm \to T^\pm_{0,1}M$, we obtain
 partial connections 
\[ \bar\partial_\pm : C^\infty(V) \to C^\infty(V \tensor T_\pm^{0,1} M)\] 
given by 
\[\bar\partial_{\pm,X}(v) := \bar{\mc D}_{\pm,s}(v),\]
for all $X \in C^\infty(T^\pm_{0,1}M)$ and $v \in C^\infty(V)$, where $s \in C^\infty(\bar\ell_\pm)$ is such that $X = a(s)$.
In particular, $\bar{\mc D}_\pm$ is flat if and only if $\bar\partial_\pm$ is flat, in which case, $(V,\bar\partial_\pm)$ is an $I_\pm$-holomorphic bundle.

\begin{defn}\label{Ipmbun}
An {\em $I_\pm$-holomorphic structure} on $V$ is defined to be a pair $(\bar \partial_+,\bar\partial_-)$
of holomorphic structures  on $V$ with respect to the complex structures $I_+$ and $I_-$, respectively. In this case, the triple
$(V, \bar \partial_+,\bar\partial_-)$ is called an {\em $I_\pm$-holomorphic bundle}.
\end{defn}

\begin{prop}\label{prop:genholoisbiholo}
 Any $\JJ$-holomorphic bundle induces an $I_\pm$-holomorphic bundle  on $(M, g, I_+, I_-)$.
\end{prop}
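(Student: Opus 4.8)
The plan is to use the splitting $\bar L = \bar\ell_+ \dsum \bar\ell_-$ recorded above, together with the involutivity of the two summands, to show that flatness of the full $L$-connection forces flatness of each of its two pieces separately. First I would take a $\JJ$-holomorphic bundle $(V,\bar{\mc D})$ and write $\bar{\mc D} = \bar{\mc D}_+ + \bar{\mc D}_-$ as in the preceding discussion, where $\bar{\mc D}_\pm$ is the $\bar\ell_\pm$-component; composing with the isomorphism $a\co \bar\ell_\pm \xto{\sim} T^\pm_{0,1}M$ yields the partial connections $\bar\partial_\pm$. One checks that the Leibniz rule for $\bar{\mc D}$, namely $\bar{\mc D}(fv) = d_{\bar L}(f)\tensor v + f\bar{\mc D}(v)$ with $d_{\bar L}(f)(s) = a(s)(f)$, restricts on each summand to exactly the Leibniz rule of a $(0,1)$-connection along $T^\pm_{0,1}M$, so each $\bar\partial_\pm$ is a genuine $I_\pm$-partial connection.

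The crux is the curvature computation. I would view $\bar{\mc D}^2$ as a section of $\End V \tensor \wedge^2 \bar L^*$ and decompose it according to
\[ \wedge^2 \bar L^* = \wedge^2\bar\ell_+^* \dsum (\bar\ell_+^* \tensor \bar\ell_-^*) \dsum \wedge^2\bar\ell_-^*. \]
For $s,t \in C^\infty(\bar\ell_+)$ the curvature is $\bar{\mc D}_s\bar{\mc D}_t - \bar{\mc D}_t\bar{\mc D}_s - \bar{\mc D}_{[s,t]}$; since $\bar\ell_+$ is involutive we have $[s,t] \in C^\infty(\bar\ell_+)$, so every term involves only $\bar{\mc D}_+$, and the $\wedge^2\bar\ell_+^*$-component of $\bar{\mc D}^2$ is exactly the curvature of $\bar{\mc D}_+$. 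Hence $\bar{\mc D}^2 = 0$ forces $\bar{\mc D}_+^2 = 0$, and symmetrically $\bar{\mc D}_-^2 = 0$. The mixed $\bar\ell_+^*\tensor\bar\ell_-^*$-component records the commutation relation \eqref{eq:commutationrelation} between $\bar\partial_+$ and $\bar\partial_-$, which plays no role in the present statement, since Definition \ref{Ipmbun} imposes no compatibility between the two holomorphic structures.

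Finally I would invoke the equivalence established just above: $\bar{\mc D}_\pm$ is flat if and only if $\bar\partial_\pm$ is, in which case the Koszul--Malgrange integrability theorem makes $(V,\bar\partial_\pm)$ a holomorphic bundle with respect to $I_\pm$. Combining the two pieces, $(V,\bar\partial_+,\bar\partial_-)$ is an $I_\pm$-holomorphic bundle in the sense of Definition \ref{Ipmbun}, as claimed.

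I expect the only genuine obstacle to be the bookkeeping in the curvature decomposition, specifically verifying that no cross terms between $\bar{\mc D}_+$ and $\bar{\mc D}_-$ leak into the pure $\wedge^2\bar\ell_\pm^*$-components. This is precisely where involutivity of $\bar\ell_\pm$ is indispensable: without it the bracket $[s,t]$ of two $\bar\ell_+$-sections could acquire an $\bar\ell_-$-component, and the identification of the pure component of $\bar{\mc D}^2$ with $\bar{\mc D}_+^2$ would break down.
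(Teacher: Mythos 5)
Your proposal is correct and follows essentially the same route as the paper: decompose $\bar{\mc D} = \bar{\mc D}_+ \dsum \bar{\mc D}_-$ via $\bar L = \bar\ell_+ \dsum \bar\ell_-$, then restrict the flatness identity \eqref{eq:flatnessdiffequation} to pairs of sections lying in a single summand (your $\wedge^2\bar\ell_\pm^*$-components), using involutivity of $\bar\ell_\pm$ under the Courant--Dorfman bracket so that the bracket term stays in that summand. The only cosmetic difference is that you phrase the restriction as a decomposition of $\bar{\mc D}^2$ in $\End V \tensor \wedge^2\bar L^*$ and make the role of involutivity explicit, whereas the paper simply sets the opposite components of $s,t$ to zero.
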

\begin{proof}
Let $\bar{\mc D} : C^\infty(V) \to C^\infty(V \tensor \bar L^*)$ be a flat $\bar L$-connection. Since $\bar L = \bar \ell_+ \dsum \bar \ell_-$, the connection decomposes as
$\bar{\mc D} = \bar{\mc D}_+ \dsum \bar{\mc D}_- $ with $\bar{\mc D}_\pm : C^\infty(V) \to C^\infty(V \tensor \bar \ell_\pm^*)$.
More explicitly, let $s_\pm \in C^\infty(\bar\ell_\pm)$ and write $s = s_+ + s_- \in C^\infty(\bar L)$. Then, for $v \in C^\infty(V)$,
$$\bar{\mc D}_s(v) := \<\bar{\mc D}(v), s\> = \bar{\mc D}_{s_+}(v) + \bar{\mc D}_{s_-}(v).$$ 
Define 
$$\bar{\mc D}_{\pm, s_\pm} (v) = \bar{\mc D}_{s_\pm}(v).$$
The flatness of $\bar{\mc D}$ is equivalent to the following identity for any $s, t \in C^\infty(\bar L)$ and $v \in C^\infty(V)$:
\begin{equation}\label{eq:flatnessdiffequation}
 \bar{\mc D}_s\bar{\mc D}_t(v) - \bar{\mc D}_t\bar{\mc D}_s(v) - \bar{\mc D}_{s*t}(v) = 0.
\end{equation}
By setting $s_\bullet = t_\bullet = 0$, $\bullet = \pm$, in \eqref{eq:flatnessdiffequation}, we see that $\bar{\mc D}_\pm$ are flat as well.
\end{proof}

\begin{example}\label{exple:trivialholomorphic}
Consider the trivial line bundle $M \times \CC$ on $M$ and its constant section $u(z) = (z,1)$. 
  The trivial $\bar L$-connection $\bar{\mc D}$ on $M \times \CC$ is given by $\bar{\mc D}(u) = 0$. 
  Clearly, $\bar{\mc D}^2 = 0$ and the corresponding partial connections $\bar\partial_\pm$ coincide with the standard complex operators $\bar\partial_\pm$ on functions:
 \[\bar\partial_\pm (fu) = \bar\partial_\pm f \tensor u.\]
\end{example}

\noindent
For examples of non-trivial $\JJ$-holomorphic and $I_\pm$-holomorphic bundles, see section \S \ref{sect:Hopfexample}.

\subsection{Commutation relation}\label{subsect:genholocommutativity}
Let $(V, \bar\partial_+, \bar\partial_-)$ be an $I_\pm$-holomorphic bundle; the $I_\pm$-holomorphic structures $\bar\partial_\pm$ induce 
$\bar\ell_\pm$-connections $\bar{\mc D}_\pm$ by
\[\bar{\mc D}_{\pm,s}(v) := \bar\partial_{\pm,a(s)}(v),\] 
for all $s \in C^\infty(\bar\ell_\pm)$ and $v \in C^\infty(V)$,
via the isomorphisms $a : \bar\ell_\pm \to T_{0,1}^\pm M$. Define 
\[ \bar{\mc D} = \bar{\mc D}_+ \dsum \bar{\mc D}_- : C^\infty(V) \to C^\infty(V\tensor \bar L^*).\]
Then $(V,\bar{\mc D})$ is a $\JJ$-holomorphic bundle if and only if \eqref{eq:flatnessdiffequation} holds.

Since the Bismut connections $\nabla^\pm : C^\infty(T^*M) \to C^\infty(T^*M \tensor T^*M)$ preserve the sub-bundles $T^{0,1}_\pm M$, we consider the component $\bar\delta_\pm$ of $\nabla^\pm$ defined by the following composition:
\begin{equation}\label{eq:partialBismut}
 \bar\delta_\pm := \nabla^+|_{T^{0,1}_\pm M} : C^\infty(T^{0,1}_\pm M) \to C^\infty(T^*M \tensor T^{0,1}_\pm M) \to C^\infty(T^{0,1}_\mp M \tensor T^{0,1}_\pm M).
\end{equation}
For example, for $\alpha_+ \in C^\infty(T^{0,1}_+ M)$, $\bar\delta_+ \alpha_+ \in C^\infty(T^{0,1}_- M \tensor T^{0,1}_+ M)$. 
We extend the partial connections $\bar\partial_\pm$ on $V$ to $V \tensor T^{0,1}_\pm M$ as follows:
\begin{equation}\label{eq:Bismutextension}
 \bar\partial_\pm : C^\infty(V \tensor T^{0,1}_\pm M) \to C^\infty(V \tensor T^{0,1}_\mp M \tensor T^{0,1}_\pm M): \bar\partial_\pm(v \tensor \alpha_\pm) = \bar\partial_\pm(v) \tensor \alpha_\pm + v \tensor \bar\delta_\pm(\alpha_\pm).
\end{equation}
\begin{prop}\label{prop:bihermitianholobundle}
 $\JJ$-holomorphic bundles are exactly the $I_\pm$-holomorphic bundles that also satisfy the commutation relation
\begin{equation}\label{eq:commutationrelation}
 [\bar\partial_+, \bar\partial_-] := \bar\partial_+\bar\partial_- + \bar\partial_-\bar\partial_+ = 0,
\end{equation}
where $T^{0,1}_+ M \tensor T^{0,1}_- M$ is identifies with $T^{0,1}_- M \tensor T^{0,1}_+ M$ via $\alpha\tensor\beta \mapsto -\beta\tensor \alpha$.
\end{prop}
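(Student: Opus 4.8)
The plan is to expand the flatness equation \eqref{eq:flatnessdiffequation} for $\bar{\mc D} = \bar{\mc D}_+ \dsum \bar{\mc D}_-$ along the splitting $\bar L = \bar\ell_+ \dsum \bar\ell_-$ and to read off its graded components. Writing $s = s_+ + s_-$ and $t = t_+ + t_-$ with $s_\pm, t_\pm \in C^\infty(\bar\ell_\pm)$, the curvature $F(s,t)(v) := \bar{\mc D}_s\bar{\mc D}_t(v) - \bar{\mc D}_t\bar{\mc D}_s(v) - \bar{\mc D}_{s*t}(v)$ splits by bilinearity into the pure parts $F(s_+,t_+)$, $F(s_-,t_-)$ and the mixed parts $F(s_+,t_-)$, $F(s_-,t_+)$. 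Setting $s_\mp = t_\mp = 0$ recovers exactly the two pure components, and by (the argument of) Proposition \ref{prop:genholoisbiholo} their vanishing is equivalent to $\bar\partial_+^2 = 0$ and $\bar\partial_-^2 = 0$; these are precisely the conditions that $(V,\bar\partial_+,\bar\partial_-)$ be an $I_\pm$-holomorphic bundle. Consequently, both inclusions of the claimed equality reduce to showing that the vanishing of the mixed components is equivalent to the commutation relation \eqref{eq:commutationrelation}.

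To analyze the mixed component, set $X_+ := a(s_+) \in C^\infty(T^+_{0,1}M)$ and $Y_- := a(t_-) \in C^\infty(T^-_{0,1}M)$, so that $\bar{\mc D}_{s_+} = \bar\partial_{+,X_+}$ and $\bar{\mc D}_{t_-} = \bar\partial_{-,Y_-}$. Then
\[ F(s_+, t_-)(v) = \bar{\mc D}_{s_+}\bar{\mc D}_{t_-}(v) - \bar{\mc D}_{t_-}\bar{\mc D}_{s_+}(v) - \bar{\mc D}_{s_+ * t_-}(v), \]
and the first two terms become $\bar\partial_{+,X_+}\bar\partial_{-,Y_-}(v) - \bar\partial_{-,Y_-}\bar\partial_{+,X_+}(v)$. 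Since $\bar L$ is involutive, $s_+ * t_- \in C^\infty(\bar L)$, so the bracket term splits as $\bar{\mc D}_+$ applied to $[s_+ * t_-]_{\bar\ell_+}$ plus $\bar{\mc D}_-$ applied to $[s_+ * t_-]_{\bar\ell_-}$. The remaining task is to evaluate these two projections.

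The heart of the argument, and the step I expect to be the main obstacle, is to show that the two projections $a([s_+ * t_-]_{\bar\ell_\pm})$ are precisely the Bismut covariant derivatives packaged into the operators $\bar\delta_\mp$ of \eqref{eq:partialBismut}. For this one restricts Gualtieri's expression \eqref{Bismut} of $\nabla^\pm$ as projections of the Courant-Dorfman bracket onto $C_\pm$ to the holomorphic eigenbundles $\bar\ell_\pm \subset C_\pm$, and uses that $\nabla^\pm$ preserve $T^{0,1}_\pm M$; the mixed bracket $s_+ * t_-$ then reproduces exactly the derivatives of the form parts $X_+$ and $Y_-$ appearing in the extension \eqref{eq:Bismutextension}. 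The contribution $\iota_{X_+}\iota_{Y_-}\gamma$ to the bracket must be tracked here, as it is absorbed into the torsion of the Bismut connections; careful bookkeeping of this term and of the signs is the delicate part. Once this identification is in hand, the covariant-derivative corrections coming from the extension \eqref{eq:Bismutextension} cancel against the bracket term, and $F(s_+,t_-)(v)$ reduces to the purely tensorial expression obtained by contracting $(\bar\partial_+\bar\partial_- + \bar\partial_-\bar\partial_+)(v)$ against $X_+$ and $Y_-$.

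Finally, I would record how the signs match: the curvature produces a commutator $\bar\partial_{+,X_+}\bar\partial_{-,Y_-} - \bar\partial_{-,Y_-}\bar\partial_{+,X_+}$, whereas \eqref{eq:commutationrelation} is the anticommutator $\bar\partial_+\bar\partial_- + \bar\partial_-\bar\partial_+$; the extra sign is exactly supplied by the identification $\alpha\tensor\beta \mapsto -\beta\tensor\alpha$ of $T^{0,1}_+M \tensor T^{0,1}_-M$ with $T^{0,1}_-M \tensor T^{0,1}_+M$, under which swapping the order of the two contracted factors flips a sign. The mixed component $F(s_-,t_+)$ yields the same equation with the roles of $+$ and $-$ interchanged and so imposes no new condition. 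Hence $\bar{\mc D}$ is flat if and only if $\bar\partial_\pm^2 = 0$ together with $[\bar\partial_+,\bar\partial_-] = 0$, which proves both inclusions and thus the asserted identification of the two classes of bundles.
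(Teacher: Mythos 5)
Your plan is correct and is essentially the paper's own proof: you decompose the flatness equation \eqref{eq:flatnessdiffequation} along $\bar L = \bar\ell_+ \oplus \bar\ell_-$, dispose of the pure components via (the argument of) Proposition \ref{prop:genholoisbiholo}, and reduce the mixed component to identifying the projections of $s_+ * t_-$ with Bismut derivatives, which is exactly how the paper arrives at its equation \eqref{eq:PairedCommutativity} before matching signs through the identification $\alpha\tensor\beta \mapsto -\beta\tensor\alpha$. The one step you flag as the main obstacle is in fact a one-liner requiring no bookkeeping of $\iota_{X}\iota_{Y}\gamma$: since $\bar L$ is isotropic and involutive, $s*t = -t*s$, so the definition \eqref{Bismut} gives directly $a(s*t) = a\bigl((s*t)^-\bigr) - a\bigl((t*s)^+\bigr) = \nabla^-_X Y - \nabla^+_Y X$ with $X = a(s)$, $Y = a(t)$, the torsion term being already packaged inside the Courant-Dorfman bracket that defines the Bismut connections.
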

\begin{proof}
For $s \in C^\infty(\bar \ell_+)$ and $t \in C^\infty(\bar \ell_-)$, the equation \eqref{eq:flatnessdiffequation} becomes
\[\bar{\mc D}_{+, s}\bar{\mc D}_{-, t}(v) - \bar{\mc D}_{-, t}\bar{\mc D}_{+, s}(v) - \bar{\mc D}_{s*t} (v) = 0.\]
Since $s, t \in C^\infty(\bar\ell_+ \dsum \bar\ell_-) = C^\infty(\bar L)$ and $\bar{L}$ is involutive and isotropic, we see that $s*t = -t*s$ and
\[ [X, Y] = a(s*t) = a\left((s*t)^- - (t*s)^+\right) = \nabla^-_X Y - \nabla^+_Y X,\]
where $X = a(s) \in C^\infty(T_{0,1}^+ M)$ and $Y = a(t) \in C^\infty(T_{0,1}^-M)$. Thus, we get \begin{equation}\label{eq:PairedCommutativity}
\bar\partial_{+,X}\bar\partial_{-,Y}(v) - \bar\partial_{-,\nabla^-_X Y}(v)- \bar\partial_{-,Y}\bar\partial_{+,X}(v) + \bar\partial_{+,\nabla^+_Y X}(v) = 0,
\end{equation}
which is equivalent to \eqref{eq:commutationrelation}.
\end{proof}

\begin{remark}
Note that not every $I_\pm$-holomorphic bundle satisfies the commutation relation \eqref{eq:commutationrelation} (see examples \ref{c2=0} and \ref{dependence-alpha:c2=1}).
\end{remark}

\begin{example}\label{exple:linebundlecommuteoperator}
Let $(V,\bar\partial_+,\bar\partial_-)$ be an $I_\pm$-holomorphic line bundle and $u$ be a local section of $V$. 
 Then, $\bar\partial_\pm(u) = u \tensor \alpha_\pm$, for some $\alpha_\pm \in C^\infty(T^{0,1}_\pm M)$, and
\[ [\bar\partial_+, \bar\partial_-](u) =  \bar\partial_+(u\tensor\alpha_-) + \bar\partial_-(u \tensor \alpha_+) = u\tensor (\bar\delta_+\alpha_- + \bar\delta_- \alpha_+).\]
 The commutation relation \eqref{eq:commutationrelation} becomes
 \[ \bar\delta_+\alpha_- + \bar\delta_- \alpha_+ = 0, \]
 where $\bar\delta_\pm$ are the components of the Bismut connection defined in \eqref{eq:partialBismut}.
\end{example}


\section{Stability and the Kobayashi-Hitchin correspondence}\label{Stability-KH}

In this section, we introduce the notions of $\alpha$-Hermitian-Einstein metric and $\alpha$-stability for $I_\pm$-holomorphic and generalized holomorphic bundles.
Our choice of $\alpha$-Hermitian-Einstein equation is motivated in section \S \ref{subsect:Kahlercohiggs}, where we consider generalized holomorphic bundles on K\"ahler
manifolds endowed with the trivial generalised K\"ahler structure. Moreover, some of the properties of $\alpha$-stable bundles are discussed in sections \ref{sect:biholbundles}
and \ref{generalizedholbun}. 

\subsection{The case of K\"ahler manifolds}\label{subsect:Kahlercohiggs}
Let $(M,g,I)$ be a K\"ahler manifold and consider the trivial generalized K\"ahler structure $(\JJ_I,\JJ_\omega)$ defined in example \ref{ex:Kahler}, whose
associated bi-Hermitian structure is $(g, I_+ = I_-= I)$. As we have seen in example \ref{co-Higgs}, all $\JJ_I$-holomorphic bundles correspond 
in this case to co-Higgs bundles, with respect to the complex structure $I$, and are therefore given by the following data:
a complex vector bundle $V$ on $M$ with 
\begin{itemize}
\item
a holomorphic structure $\bar\partial : C^\infty(V) \to C^\infty(V \tensor T^{0,1}M)$;
\item
a Higgs field $\psi \in C^\infty(\End(V) \tensor T_{1,0} M)$ such that $\bar\partial\psi = 0$ and $\psi \wedge \psi = 0$.
\end{itemize}

On the other hand, by Proposition \ref{prop:genholoisbiholo}, we know the co-Higgs bundle $(V,\bar\partial,\psi)$ also corresponds to an $I_\pm$-holomorphic bundle
$(V,\bar\partial_+,\bar\partial_-)$. Note that here 
\[\bar\partial_\pm: C^\infty(V) \to C^\infty(V \tensor T^{0,1}M)\] 
are just holomorphic structures on $V$ with respect to the complex structure $I$ since $I_\pm = I$. 
In fact, these $I_\pm$-holomorphic structures $\bar\partial_\pm$ can be constructed in terms of the co-Higgs data as follows. 

For any $\alpha \in (0,1)$, define
\begin{equation}\label{eq:inversejalpha}
\mbox{$\bar\partial_- = \bar\partial - \alpha\varphi$ \, and \, $\bar\partial_+ = \bar\partial + (1-\alpha)\varphi$ \, with \, $\varphi = \displaystyle \sqrt{\frac{1}{\alpha(\alpha-1)}}\omega \psi$.}
\end{equation}
Then, $\bar\partial_\pm$ are both derivation. Furthermore, $\bar\partial \varphi = 0$ because $d\omega = 0$ (since $g$ is K\"ahler) and $\bar\partial \psi = 0$. 
Therefore,
\[ \bar\partial_-^2 = \bar\partial^2 + \alpha^2\varphi \wedge \varphi = 0,\]
where $\varphi \wedge \varphi = 0$ comes from the fact that $\psi \wedge \psi = 0$. Since $\bar\partial_+ = \bar\partial_- + \varphi$, we also have $\bar\partial_+^2 = 0$, implying that
$(\bar\partial_+,\bar\partial_-)$ is an $I_\pm$-holomorphic structure on $V$. However, 
\[ [\bar\partial_+, \bar\partial_-] = [\bar\partial_- + \varphi, \bar\partial_-] = 2(\bar\partial_-)^2 + \bar\partial_- \varphi = 2(\bar\partial_-)^2 + \bar\partial \varphi - \varphi \wedge \varphi = 0, \]
so that $(\bar\partial_+,\bar\partial_-)$ satisfies the commutation relation \eqref{eq:commutationrelation}. In other words, $(\bar\partial_+,\bar\partial_-)$ corresponds to a
$\JJ_I$-holomorphic bundle.

Recall that a co-Higgs bundle $(V, \bar\partial,\psi)$ is called stable if the holomorphic bundle $(V, \bar\partial)$ satisfies the usual slope-stability condition 
for $\psi$-invariant coherent subsheaves \cite{Hitchin11,Rayan11}. Conversely, Hitchin suggests the following Yang-Mills-type equations for co-Higgs bundles \cite{Hitchin11}.
Fix a Hermitian metric $h$ on $V$ and let $F$ be the curvature of the Chern connection $\nabla^C = \partial + \bar\partial$ corresponding to $\bar\partial$. The co-Higgs bundle is 
then (poly)stable if $F$ and $\psi$ satisfy the equation:
\begin{equation}\label{eq:coHiggsHE}
 (F + [\varphi_0, \varphi_0^*]) \wedge \omega^{n-1} = (n-1)!\lambda \id_V \vol_g, \text{ where } \varphi_0 = \sqrt{-1}\omega\psi.
\end{equation}
These equations are the co-Higgs counterpart of Simpson's Yang-Mills equations for Higgs bundles on K\"ahler manifolds \cite{Simpson88}.
Also note that equation \eqref{eq:coHiggsHE} reduces to the Hermitian-Einstein equations for holomorphic vector bundles when $\psi = 0$.
Given the relationship we established between co-Higgs bundles and generalized holomorphic bundles at the beginning of the section, 
equation \eqref{eq:coHiggsHE} may give us a hint at possible Yang-Mills-type equations for generalized holomorphic bundles on generalized K\"ahler manifolds.

Let $\nabla^C_\pm$ be the Chern connections on $(V, h)$ corresponding to $\bar\partial_\pm$. Their curvatures are  
\begin{equation*}
 \begin{split}
  F_+ & = F + (1-\alpha) \partial \varphi - (1-\alpha)\bar\partial \varphi^* - (1-\alpha)^2 (\varphi \varphi^* + \varphi^*\varphi),\\
  F_- & = F -\alpha \partial \varphi +\alpha \bar\partial \varphi^* - \alpha^2 (\varphi \varphi^* + \varphi^*\varphi),
 \end{split}
\end{equation*}
respectively, giving us
\begin{equation}\label{eq:alphaHEleftside}
 \alpha F_+ +(1-\alpha)F_- = F - \alpha(1-\alpha)(\varphi\varphi^*+\varphi^*\varphi) = F + [\varphi_0, \varphi^*_0].
\end{equation}

\noindent
Equations \eqref{eq:coHiggsHE} and \eqref{eq:alphaHEleftside} then suggest the following Hermitian-Einstein equation for generalized holomorphic bundles on generalized K\"ahler manifolds:
\[\sqrt{-1}\left(\alpha F_+\wedge \omega_+^{n-1} + (1-\alpha) F_-\wedge \omega_-^{n-1}\right) = (n-1)!\lambda \id_V \vol_g.\]
This is the equation we will adopt in section \ref{sect:biholbundles} as our $\alpha$-Hermitian-Einstein equation \eqref{eq:alphaHE}.

\begin{remark}
Equations \eqref{eq:alphaHEleftside} and \eqref{eq:alphaHE} were derived by Hitchin for $\alpha = 1/2$ in \cite{Hitchin11}. 
However, the choice of $\alpha$ has an impact on the set of solutions of the equation (see sections \S \ref{sect:biholbundles} and \S \ref{subsect:rank2onhopf}). 
We therefore study the equation for all $\alpha \in (0,1)$.
\end{remark}

\subsection{$I_\pm$-holomorphic vector bundles}\label{sect:biholbundles}

Let $(V,\bar\partial_+, \bar\partial_-)$ be an $I_\pm$-holomorphic vector bundle on $M$ and $h$ be a Hermitian metric on $V$. Let $F_\pm$ be the curvatures of the Chern connections $\nabla^C_{\pm}$ on $V$ corresponding to the $I_\pm$-holomorphic structures $\bar\partial_\pm$. Fix $\alpha \in (0,1)$. 

We define the \emph{$\alpha$-Hermitian-Einstein equation} as:
\begin{equation}\label{eq:alphaHE}
 \sqrt{-1}\left(\alpha F_+\wedge \omega_+^{n-1} + (1-\alpha) F_-\wedge \omega_-^{n-1}\right) = (n-1)!\lambda \id_V \vol_g,
\end{equation}
where  $\lambda \in\RR$.
Note that this equation is a natural generalisation of the Hermitian-Einstein equation on a complex manifold endowed with a Gauduchon metric, to the bi-Hermitian setting; as in the complex case, we have:

\begin{defn}\label{defn:HermitianEinsteinconnections}
The Hermitian metric $h$ on $V$ is called \emph{$\alpha$-Hermitian-Einstein} if the Chern connections  $\nabla^C_\pm$ corresponding 
to the $I_\pm$-holomorphic structure $\bar\partial_\pm$ satisfy the $\alpha$-Hermitian-Einstein equation \eqref{eq:alphaHE}.
Alternatively, let $(\nabla_+, \nabla_-)$ be a pair of $h$-unitary connections on $V$. It is then called an \emph{$\alpha$-Hermitian-Einstein pair} if the curvatures $F_\pm$ of $\nabla_\pm$ are of type $(1,1)$ with respect to $I_\pm$, respectively, and satisfy equation \eqref{eq:alphaHE}. 
Hence, the Hermitian metric $h$ on $(V,\bar\partial_+, \bar\partial_-)$ is $\alpha$-Hermitian-Einstein if and only if the corresponding pair of Chern connections $(\nabla^C_+, \nabla^C_-)$ is an
$\alpha$-Hermitian-Einstein pair.
\end{defn}

In order to determine the $\alpha$-polystability of the $I_\pm$-holomorphic bundle $(V,\bar\partial_+,\bar\partial_-)$, 
we first need to define what we mean by $\alpha$-slope and by coherent subsheaf of $(V,\bar\partial_+,\bar\partial_-)$. 
Since we assumed the Riemannian metric $g$ to be Gauduchon with respect to both $I_+$ and $I_-$, 
we can associate to $V$ two degrees $\deg_\pm(V)$ and two slopes $\mu_\pm(V)$ in the standard way (see \cite{LubkeTeleman1995}, Definition 1.4.1):
 \[\deg_\pm(V) =\frac{\sqrt{-1}}{2\pi} \int_M \tr(F_\pm) \wedge \omega_\pm^{n-1}\] 
 and 
 \[\mu_\pm(V) = \frac{\deg_\pm(V)}{\rk V}.\] 
Note that $\deg_\pm(V)$ are independent of the choice of Hermitian metric $h$ on $V$ because the curvatures of Chern connections corresponding to different Hermitian metrics on $V$ differ by 
$\partial_\pm \bar\partial_\pm$-exact forms. Given these degrees and slopes, we now have:
 
\begin{defn}\label{defn:alphadegree}
We define the {\em $\alpha$-degree $\deg_\alpha(V)$} and the {\em $\alpha$-slope $\mu_\alpha(V)$} of $(V,\bar\partial_+,\bar\partial_-)$ as
\[ \deg_\alpha(V) := \alpha \deg_+(V)+ (1-\alpha) \deg_- (V)\]
and
\[\mu_\alpha(V) := \alpha \mu_+(V) + (1-\alpha)\mu_-(V),\]
respectively.
\end{defn}

\noindent
Furthermore, we define coherent subsheaves of $(V,\bar\partial_+,\bar\partial_-)$ as follows:
 \begin{defn} \label{defn:coherentsheaves}
  Let $\FFC_\pm$ be coherent subsheaves of $(V, \bar\partial_\pm)$, respectively. The pair $\FFC := (\FFC_+, \FFC_-)$ is said to be a \emph{coherent subsheaf} of $(V, \bar\partial_+, \bar\partial_-)$  if there exists analytic subsets $S_+$  and $S_-$ of $(M, I_+)$ and $(M, I_-)$, respectively, such that
  \begin{enumerate}
   \item $S := S_-\union S_+$ have codimension at least $2$;
   \item $\FFC_\pm|_{M \setminus S_\pm}$ are locally free and $\FFC_-|_{M\setminus S} = \FFC_+|_{M\setminus S}$.
  \end{enumerate}
The \emph{rank} of $\FFC$ is the rank of $\FFC_\pm|_{M \setminus S_\pm}$ and the \emph{$\alpha$-slope} of $\FFC$ is
  $$\mu_\alpha(\FFC) := \alpha \frac{\deg_+(\FFC_+)}{\rk(\FFC_+)} + (1-\alpha) \frac{\deg_-(\FFC_-)}{\rk(\FFC_-)}.$$
\end{defn}
 
\begin{remark*}
This notion of coherent subsheaf is motivated by the proof of theorem \ref{thm:setKHcorrespondence} (for details, see section \S \ref{subsect:coherentsheavesIpm}).
It would be interesting to understand the singularities of these sheaves with respect to the $I_\pm$-holomorphic structures, where they fail to be locally free, more concretely.
\end{remark*}

Let us now define $\alpha$-stability for $I_\pm$-holomorphic bundles.
\begin{defn}\label{defn:alphastable}
An $I_\pm$-holomorphic structure  $(\bar\partial_+, \bar\partial_-)$ on $V$ is called \emph{$\alpha$-stable} (resp.,  \emph{$\alpha$-semistable}), if, for any proper coherent subsheaf $\FFC$ of 
$(V,\bar\partial_+,\bar\partial_-)$, we have 
\[\mbox{$ \mu_\alpha(\FFC) < \mu_\alpha(V)$ (resp., $\mu_\alpha(\FFC) \leq \mu_\alpha(V)$}).  \]
Moreover,  $(\bar\partial_+, \bar\partial_-)$ is said to be \emph{$\alpha$-polystable} if it is a direct sum of $\alpha$-stable vector bundles with the same $\alpha$-slope.
\end{defn}

As in the complex case, there are several useful properties of $\alpha$-stability that follow directly from the definition:
\begin{enumerate}
\item
All $I_\pm$-holomorphic line bundles are $\alpha$-stable for all $\alpha \in (0,1)$.

\item
If $(V,I_+,I_-)$ does not admit a proper coherent subsheaf, then it is $\alpha$-stable for all $\alpha \in (0,1)$.
This can occur in two ways. Either $(V,\bar\partial_+)$ or $(V,\bar\partial_-)$ does not admit proper coherent subsheaves,
which is possible on non-projective manifolds (see example \ref{c2>0} on a Hopf surface). 
Or $(V,\bar\partial_+)$ and $(V,\bar\partial_-)$ both admit proper coherent subsheaves, but none agree away from an analytic set $S$ of codimension at least 2 
(see example \ref{c2=0}).

\item
On $I_\pm$-holomorphic bundles of rank-2, it is enough to check $\alpha$-stability only using sub-line bundles.

\item
Suppose $(g,I_+,I_-)$ is the bi-Hermitian structure associated to the trivial generalized K\"ahler structure on a K\"ahler manifold $(M,I,g)$ (see example \ref{ex:Kahler}),
so that $I_+ = I_- = I$. Therefore, $\deg_+(V) = \deg_-(V) = \deg(V)$, where $\deg(V)$ is the degree with respect to the initial K\"ahler structure $(g,I)$. Consequently,
$\deg_\alpha(V) = \deg(V)$ for all $\alpha \in (0,1)$, showing that $\alpha$-stability is independent of $\alpha$ in this case and coincides with the usual notion of stability
with respect to the K\"ahler metric $g$.
Nonetheless, when the bi-Hermitian structure does not come from a K\"ahler structure, $\alpha$-stability may depend on the choice of $\alpha$. This is indeed the case
on Hopf surfaces (see examples \ref{dependence-alpha:c2=0} and \ref{dependence-alpha:c2=1}).
\end{enumerate}

\noindent
Another property of classical stability that extends to the bi-Hermitian setting is the fact that $\alpha$-stable bundles are simple, where the notion of simple $I_\pm$-holomorphic bundle is defined as follows:
\begin{defn}\label{defn:simpleness}
 The $I_\pm$-holomorphic bundle $(V, \bar\partial_+, \bar\partial_-)$ is said to be {\em simple} if constant multiples of the identity are the only global endomorphism of $V$ that are holomorphic with respect to 
 both $\bar\partial_+$ and $\bar\partial_-$. More precisely, $(V, \bar\partial_+, \bar\partial_-)$ is simple if, for any $\Phi \in C^\infty(\End(V))$ such that
$\bar \partial_+ \Phi = 0$ and $\bar\partial_- \Phi = 0$, we have $\Phi = c \id_V$ for some $c \in \CC$.
\end{defn}

\noindent
Note that an $I_\pm$-holomorphic structure $(\bar\partial_+, \bar\partial_-)$ is trivially simple if one of $\bar\partial_\pm$ is simple as an $I_\pm$-holomorphic structure over $(M,I_\pm)$;
the other implication may, however, not be true.
Nevertheless, $\alpha$-stability implies simplicity for $I_\pm$-holomorphic bundles as in the classical case.
 \begin{lemma}\label{lemma:stabilityimpliessimplicity}
  If $(V, \bar\partial_+,\bar\partial_-)$ is stable, then it is simple.
 \end{lemma}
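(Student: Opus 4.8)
The plan is to mimic the classical argument: show that a nonzero global endomorphism $\Phi$ holomorphic with respect to both $\bar\partial_+$ and $\bar\partial_-$ must be a scalar multiple of the identity, by using its eigenvalues to manufacture forbidden coherent subsheaves. First I would observe that if $\Phi \in C^\infty(\End(V))$ satisfies $\bar\partial_+\Phi = 0$ and $\bar\partial_-\Phi = 0$, then its characteristic polynomial has coefficients that are global functions holomorphic with respect to both $I_+$ and $I_-$; since $M$ is compact and connected, each such coefficient is constant (a function holomorphic with respect to $I_+$ on a compact connected manifold is already constant). Hence the eigenvalues of $\Phi$ are constant complex numbers $c_1, \dots, c_k$, independent of the point.

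Next I would argue that $\Phi$ cannot have two distinct eigenvalues. Suppose $\Phi$ has more than one distinct constant eigenvalue, and pick one, say $c$. Then $\Psi := \Phi - c\,\id_V$ is again a global endomorphism with $\bar\partial_\pm \Psi = 0$, and it is neither zero nor invertible. The key step is to extract a coherent subsheaf $\FFC = (\FFC_+, \FFC_-)$ of $(V,\bar\partial_+,\bar\partial_-)$ from $\Psi$. Concretely, I would take $\FFC_\pm := \ker\Psi$ (or the image sheaf $\operatorname{img}\Psi$), viewed as a subsheaf of $(V,\bar\partial_\pm)$: because $\Psi$ is $\bar\partial_\pm$-holomorphic, its kernel and image are $\bar\partial_\pm$-coherent subsheaves of $V$ in the usual sense over $(M,I_\pm)$, locally free away from an analytic set $S_\pm$ where the rank of $\Psi$ jumps. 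Crucially, since the \emph{same} endomorphism $\Psi$ defines both $\FFC_+$ and $\FFC_-$, on the common locus $M\setminus S$ (with $S = S_+\cup S_-$) the two subsheaves coincide as subbundles of $V$, so the compatibility condition (2) of Definition \ref{defn:coherentsheaves} holds; the rank and the jumping set have the expected codimension, giving condition (1). Thus $\FFC$ is a genuine proper coherent subsheaf of $(V,\bar\partial_+,\bar\partial_-)$.

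The main obstacle — and the step I would dwell on — is the slope comparison that derives a contradiction with $\alpha$-stability. The point is that $\Psi$ provides, away from $S$, a \emph{bundle endomorphism} that is holomorphic for both structures, so $V|_{M\setminus S}$ splits along the eigenvalues of $\Phi$ into $\bar\partial_\pm$-holomorphic subbundles simultaneously; this lets me realise both an eigen-subsheaf $\FFC$ and its complementary quotient as coherent subsheaves of $(V,\bar\partial_+,\bar\partial_-)$. Applying $\alpha$-stability to $\FFC$ gives $\mu_\alpha(\FFC) < \mu_\alpha(V)$, while applying it to the complementary subsheaf gives the reverse strict inequality after the standard additivity $\deg_\alpha(V) = \deg_\alpha(\FFC) + \deg_\alpha(V/\FFC)$ and the equal-rank-weighting bookkeeping; combining the two yields a contradiction unless there is only one eigenvalue. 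Here the delicate part is checking that $\deg_\pm$ of these eigen-subsheaves behaves additively and that the codimension-$2$ bound on $S$ ensures the degrees are well defined and unchanged by the removal of $S$ — this is where Assumption \ref{assump:Gauduchon} (Gauduchon with respect to both $I_\pm$) is used, exactly as in the classical Gauduchon-degree formalism of \cite{LubkeTeleman1995}. Once two distinct eigenvalues are excluded, $\Phi$ has a single constant eigenvalue $c$, so $\Phi - c\,\id_V$ is a $\bar\partial_\pm$-holomorphic endomorphism that is nilpotent; a final iteration of the same subsheaf argument on the image filtration (or a direct simplicity-of-the-generic-fibre argument) forces $\Phi - c\,\id_V = 0$, giving $\Phi = c\,\id_V$ and hence simplicity.
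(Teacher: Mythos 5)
Your proof is correct and takes essentially the same route as the paper's: the paper's own proof consists of stating the slope seesaw property for short exact sequences of $I_\pm$-holomorphic bundles and then appealing to ``the classical case,'' and your argument (constant eigenvalues, kernel/image subsheaf pairs extracted from $\Phi - c\,\id_V$, seesaw contradiction with $\alpha$-stability) is precisely that classical argument written out, with the bi-Hermitian-specific point --- that the $\pm$ subsheaves defined by the same endomorphism agree away from a codimension-two set, so Definition \ref{defn:coherentsheaves} applies --- made explicit rather than left to the reader.
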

 \begin{proof}
  The slope $\mu_\bullet$ satisfies the following inequality
  \[ \mu_\bullet(S) \varleq \mu_\bullet(E) \varleq \mu_\bullet(Q)\]
  when $S,E$ and $Q$ are $I_\pm$-holomorphic bundles fitting into the short exact sequence
  \[ 0 \to S \to E \to Q \to 0, \]
 for $\mbox{$\bullet = +$ or $-$}$.
The $\alpha$-slope $\mu_\alpha$ therefore satisfies the same inequality for short exact sequences of $I_\pm$-holomorphic bundles. 
This inequality implies, as in the classical case, that stable $I_\pm$-holomorphic vector bundles  are simple.
 \end{proof}
  
 We end this section with the main result of the paper, which is a Kobayashi-Hitchin correspondence for $I_\pm$-holomorphic bundles.

  \begin{theorem}\label{thm:setKHcorrespondence}
 Let $(M,g,I_+,I_-)$ be a compact bi-Hermitian manifold such that $g$ is Gauduchon with respect to both $I_+$ and $I_-$, and $\vol_g = \frac{1}{n!} \omega_\pm^n$. 
 In addition, let $(V, \bar\partial_+, \bar\partial_-)$ be an $I_\pm$-holomorphic vector bundle on $M$. 
 In this case, $(V, \bar\partial_+, \bar\partial_-)$ admits an $\alpha$-Hermitian-Einstein metric if and only if it is $\alpha$-polystable, for any $\alpha \in (0,1)$.
 \end{theorem}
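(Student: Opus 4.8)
The plan is to prove both implications by transcribing the continuity-method proof of the Kobayashi--Hitchin correspondence from Chapters~2 and~3 of \cite{LubkeTeleman1995}, organised around the single coupled operator
\[
K(h) := \sqrt{-1}\left(\alpha \Lambda_+ F_+(h) + (1-\alpha)\Lambda_- F_-(h)\right),
\]
where $\Lambda_\pm$ is contraction against $\omega_\pm$, so that, using $\beta\wedge\omega_\pm^{n-1}=(n-1)!\,(\Lambda_\pm\beta)\,\vol_g$, the $\alpha$-Hermitian--Einstein equation \eqref{eq:alphaHE} reads simply $K(h)=\lambda\,\id_V$. Two features of Assumption~\ref{assump:Gauduchon} make the classical machinery transfer. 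First, $\vol_g=\tfrac1{n!}\omega_\pm^n$ means both curvature terms are measured against a \emph{common} volume form, so the convex combination is consistent; and the Gauduchon condition $dd^c_\pm\omega_\pm=0$ makes $\deg_\pm$, hence $\deg_\alpha$ and $\mu_\alpha$, well defined, independent of $h$, and compatible with integration by parts against $\omega_\pm^{n-1}$. Second, since the \emph{same} Riemannian metric $g$ underlies both Hermitian structures, the two Chern Laplacians $\sqrt{-1}\Lambda_\pm\bar\partial_\pm\partial_\pm$ have the same real principal symbol; as $\alpha,1-\alpha>0$, the second-order part of $K$ is the strictly positive combination $\alpha\Delta_++(1-\alpha)\Delta_-$, which is again elliptic with that common symbol. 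This uniform ellipticity for all $\alpha\in(0,1)$ is what allows the whole analytic apparatus to run.

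For the direction $\alpha$-Hermitian--Einstein $\Rightarrow$ $\alpha$-polystable, I would first fix $\lambda$ by tracing $K(h)=\lambda\,\id_V$ and integrating, which gives $\mu_\alpha(V)=\tfrac{\lambda}{2\pi}\Vol(M)$. Let $\FFC=(\FFC_+,\FFC_-)$ be a proper coherent subsheaf as in Definition~\ref{defn:coherentsheaves}; away from the codimension-$2$ set $S$ it is a common $h$-orthogonal subbundle with a single projection $\pi$, weakly holomorphic for both $\bar\partial_\pm$. The Gauss--Codazzi degree formula applied to each structure reads
\[
\deg_\pm(\FFC_\pm)=\frac{1}{2\pi}\int_M \tr\!\big(\pi\,\sqrt{-1}\Lambda_\pm F_\pm\big)\,\vol_g-\frac{1}{2\pi}\int_M |\bar\partial_\pm\pi|^2\,\vol_g,
\]
and forming the $\alpha$-combination, then substituting $K(h)=\lambda\,\id_V$, yields
\[
\deg_\alpha(\FFC)=\frac{\lambda\,\rk\FFC}{2\pi}\Vol(M)-\frac{1}{2\pi}\int_M\big(\alpha|\bar\partial_+\pi|^2+(1-\alpha)|\bar\partial_-\pi|^2\big)\,\vol_g.
\]
Dividing by $\rk\FFC$ gives $\mu_\alpha(\FFC)\le\mu_\alpha(V)$, with equality precisely when $\bar\partial_+\pi=\bar\partial_-\pi=0$ (both coefficients being positive). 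Equality thus forces $\pi$ to be a genuine holomorphic, $h$-orthogonal projection for both structures, splitting $V$ as an $I_\pm$-holomorphic bundle; iterating on the summands upgrades $\alpha$-semistability to $\alpha$-polystability.

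For the converse, the plan is the continuity method: with a background metric $h_0$ and $h=h_0e^{s}$, solve for $\epsilon\in(0,1]$ the perturbed equation $K(h_\epsilon)-\lambda\,\id_V=-\epsilon\log(h_0^{-1}h_\epsilon)$. The $-\epsilon\log$ term makes the operator strictly monotone, giving solvability at $\epsilon=1$; openness in $\epsilon$ follows from the implicit function theorem, since the linearization of $K$ at a solution is $\alpha\Delta_++(1-\alpha)\Delta_-$ plus zeroth-order terms together with the strictly negative contribution of the perturbation, an invertible elliptic operator by the uniform ellipticity above. Closedness reduces, via an $L^2$ integral estimate and elliptic bootstrapping, to a uniform $C^0$ bound on $s_\epsilon$; once that holds as $\epsilon\to0$ one passes to the limit and recovers a solution of $K(h)=\lambda\,\id_V$, i.e. an $\alpha$-Hermitian--Einstein metric.

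The hard part will be the uniform $C^0$ estimate (closedness), whose failure must be excluded by $\alpha$-stability, and here lies the one genuinely new point. If $\max|s_\epsilon|\to\infty$, the Uhlenbeck--Yau rescaling-and-extraction procedure \cite{UhlenbeckYau86, LubkeTeleman1995} produces a nonzero $L^2_1$ self-adjoint projection $\pi_\infty$ satisfying, in the limit, $\alpha\|(\id-\pi_\infty)\bar\partial_+\pi_\infty\|_{L^2}^2+(1-\alpha)\|(\id-\pi_\infty)\bar\partial_-\pi_\infty\|_{L^2}^2=0$; because $\alpha,1-\alpha>0$, \emph{both} terms vanish, so the single endomorphism $\pi_\infty$ is weakly holomorphic for $\bar\partial_+$ and for $\bar\partial_-$ simultaneously. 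Applying the Uhlenbeck--Yau regularity theorem separately on $(M,I_+)$ and $(M,I_-)$ then realises $\pi_\infty$ as coherent subsheaves $\FFC_\pm\subset(V,\bar\partial_\pm)$, each locally free off an analytic set $S_\pm$ of codimension $\ge2$ and agreeing with $\img\pi_\infty$ off $S_+\cup S_-$ --- exactly a coherent subsheaf in the sense of Definition~\ref{defn:coherentsheaves} --- and the same extraction shows it destabilises the $\alpha$-slope, contradicting $\alpha$-stability. This forces the $C^0$ bound, proving existence for $\alpha$-stable bundles; for the $\alpha$-polystable case one applies the result to each stable summand, whose common $\alpha$-slope fixes a common $\lambda$ (with Lemma~\ref{lemma:stabilityimpliessimplicity} guaranteeing the summands are simple). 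I expect verifying this \emph{simultaneous} weak holomorphicity of $\pi_\infty$ and its regularity, and thereby confirming that the paper's paired notion of coherent subsheaf is precisely the object the analysis produces, to be the crux of the argument.
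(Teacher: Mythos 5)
Your proposal is correct, and for the hard direction ($\alpha$-polystable $\Rightarrow$ existence) it follows essentially the same route as the paper: the L\"ubke--Teleman continuity method for the perturbed equation, openness via the implicit function theorem for the elliptic combination $\alpha P_+ + (1-\alpha)P_-$, and, when $\norm{\log f_\varepsilon}_{L^2}$ blows up as $\varepsilon \to 0$, the Uhlenbeck--Yau extraction of a single $L^2_1$ projection that is weakly holomorphic for \emph{both} $\bar\partial_+$ and $\bar\partial_-$ (both terms vanish since $\alpha, 1-\alpha > 0$), realised by the Uhlenbeck--Yau regularity theorem as a destabilising pair $(\FFC_+,\FFC_-)$ in the sense of Definition \ref{defn:coherentsheaves}; this is exactly what the paper does in \S \ref{subsect:coherentsheavesIpm}, and you correctly identify the simultaneous weak holomorphicity as the crux. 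Where you genuinely diverge is the converse direction. The paper (following Theorem 2.3.2 of L\"ubke--Teleman) proves that an $\alpha$-Hermitian-Einstein metric forces $\alpha$-polystability via a vanishing theorem (Theorem \ref{thm:vanishing}): a coherent subsheaf of rank $r$ induces an $I_\pm$-holomorphic section of $\wedge^{r} V \tensor \det \FFC^*$, and the sign of the Einstein constant controls the existence and parallelism of such sections. You instead propose the Chern--Weil/Gauss--Codazzi route: the degree formula for the weakly holomorphic projection $\pi$ applied to each complex structure, combined with weights $\alpha$ and $1-\alpha$, giving $\mu_\alpha(\FFC) \leq \mu_\alpha(V)$ with equality forcing $\bar\partial_\pm \pi = 0$ and hence an $I_\pm$-holomorphic orthogonal splitting. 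Both routes are standard and both work here; indeed the paper itself invokes precisely your $\alpha$-combined degree formula at the end of the proof of Proposition \ref{prop:destabilizing}, so the formula you rely on is legitimately available in the Gauduchon setting. Your version makes the equality case, hence polystability, transparent in a single computation, at the cost of having to justify that degree formula for possibly singular projections; the paper's vanishing-theorem route sidesteps that in this direction. One organisational caveat: in the paper, closedness of the solution set $J$ in $(0,1]$ uses only \emph{simplicity} of $(V,\bar\partial_+,\bar\partial_-)$ (Theorem \ref{thm:Jclosed}), with $\alpha$-stability entering only in the separate limit $\varepsilon \to 0$; your write-up folds these two steps into one ``uniform $C^0$ estimate excluded by stability,'' which is harmless in substance but should be kept separate when writing out the details.
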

 \begin{proof}
 The proof is outlined in section \S \ref{sect:correspondenceforIpm}.
 \end{proof}

 \subsection{Generalized holomorphic bundles}\label{generalizedholbun}
 
 Because of Proposition \ref{prop:bihermitianholobundle}, the notions of $\alpha$-degree and $\alpha$-slope as well as $\alpha$-stability can be adapted to $\JJ$-holomorphic vector bundle, under Assumption \ref{assump:Gauduchon} that $g$ is Gauduchon with respect to both $I_+$ and $I_-$, and $\vol_g = \frac{1}{n!} \omega_\pm^n$.
\begin{defn}\label{defn:alphastablegenholo}
Let $\alpha \in (0,1)$. A $\JJ$-holomorphic vector bundle $(V, \bar\partial_+, \bar\partial_-)$ is called \emph{$\alpha$-(poly)stable} if the induced $I_\pm$-holomorphic structure $(\bar\partial_+, \bar\partial_-)$
on $V$ is $\alpha$-(poly)stable.
\end{defn}

From Theorem \ref{thm:setKHcorrespondence}, we thus obtain a Kobayashi-Hitchin correspondence for $\JJ$-holomorphic bundles on generalized K\"ahler manifolds:

\begin{corollary}\label{coro:generalizeKahlerKH}
Let $(M,\JJ,\JJ')$ be a compact generalized K\"ahler manifold whose associated bi-Hermitian structure $(g,I_+,I_-)$ satisfies Assumption \ref{assump:Gauduchon}
and let $(V, \bar\partial_+, \bar\partial_-)$ be a $\JJ$-holomorphic bundle on $M$. 
Then, $(V, \bar\partial_+, \bar\partial_-)$ admits an $\alpha$-Hermitian-Hermitian metric if and only if it is $\alpha$-polystable, for any $\alpha \in (0,1)$.
 \qed
\end{corollary}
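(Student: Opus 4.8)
The plan is to deduce this corollary directly from Theorem \ref{thm:setKHcorrespondence}, by observing that both sides of the stated equivalence, for a $\JJ$-holomorphic bundle, are defined entirely through the induced $I_\pm$-holomorphic structure, and then applying the theorem verbatim. No new analysis is required.

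First I would invoke Proposition \ref{prop:genholoisbiholo} (equivalently Proposition \ref{prop:bihermitianholobundle}): the flat $\bar L$-connection $\bar{\mc D}$ underlying the $\JJ$-holomorphic bundle decomposes as $\bar{\mc D} = \bar{\mc D}_+ \dsum \bar{\mc D}_-$ along $\bar L = \bar\ell_+ \dsum \bar\ell_-$, and, via the isomorphisms $a \colon \bar\ell_\pm \to T^\pm_{0,1}M$, yields a pair $(\bar\partial_+, \bar\partial_-)$ of $I_\pm$-holomorphic structures. Thus $(V, \bar\partial_+, \bar\partial_-)$ is an $I_\pm$-holomorphic bundle on $(M, g, I_+, I_-)$, to which the machinery of section \S \ref{sect:biholbundles} applies.

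Next I would match the two notions appearing in the statement against those of this underlying $I_\pm$-holomorphic bundle. On the analytic side, the $\alpha$-Hermitian-Einstein equation \eqref{eq:alphaHE} is formulated purely in terms of the curvatures $F_\pm$ of the Chern connections $\nabla^C_\pm$ attached to $\bar\partial_\pm$ and a Hermitian metric $h$; hence $h$ is $\alpha$-Hermitian-Einstein for the $\JJ$-holomorphic bundle exactly when it is $\alpha$-Hermitian-Einstein for the induced $I_\pm$-holomorphic bundle. On the algebraic side, Definition \ref{defn:alphastablegenholo} stipulates that the $\JJ$-holomorphic bundle is $\alpha$-(poly)stable precisely when the induced $I_\pm$-holomorphic structure $(\bar\partial_+, \bar\partial_-)$ is. So both conditions in the corollary are, by definition, the corresponding conditions for $(V, \bar\partial_+, \bar\partial_-)$.

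Finally, since the associated bi-Hermitian structure $(g, I_+, I_-)$ satisfies Assumption \ref{assump:Gauduchon} — namely $g$ is Gauduchon with respect to both $I_+$ and $I_-$ and $\vol_g = \frac{1}{n!}\omega_\pm^n$ — the compact bi-Hermitian manifold meets exactly the hypotheses of Theorem \ref{thm:setKHcorrespondence}. Applying that theorem to $(V, \bar\partial_+, \bar\partial_-)$ shows it admits an $\alpha$-Hermitian-Einstein metric if and only if it is $\alpha$-polystable, for every $\alpha \in (0,1)$; translating both sides back through the identifications above yields the corollary. I do not expect a genuine obstacle here: all the substantive content — both the existence of an $\alpha$-Hermitian-Einstein metric on an $\alpha$-polystable bundle (a Donaldson--Uhlenbeck--Yau type argument) and the converse $\alpha$-polystability of an $\alpha$-Hermitian-Einstein bundle — is already carried by Theorem \ref{thm:setKHcorrespondence}. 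This corollary is only the remark that $\JJ$-holomorphic bundles form the subclass of $I_\pm$-holomorphic bundles satisfying the commutation relation \eqref{eq:commutationrelation}, so the theorem applies to them without modification.
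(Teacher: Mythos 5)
Your proposal is correct and matches the paper's own treatment: the corollary is stated there with an immediate \qed, precisely because Proposition \ref{prop:bihermitianholobundle} identifies $\JJ$-holomorphic bundles with $I_\pm$-holomorphic bundles satisfying the commutation relation \eqref{eq:commutationrelation}, Definition \ref{defn:alphastablegenholo} reduces $\alpha$-polystability to that of the induced pair $(\bar\partial_+,\bar\partial_-)$, and Theorem \ref{thm:setKHcorrespondence} then applies verbatim under Assumption \ref{assump:Gauduchon}. Your spelling-out of the definitional identifications on both the analytic and algebraic sides is exactly the (implicit) argument the authors intend.
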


 
\section{Hopf surfaces}\label{sect:Hopfexample}

In this section, we consider Hopf surfaces, which admit a natural bi-Hermitian structure $(g,I_+,I_-)$ that corresponds to an even generalized K\"ahler structure on the 
even-dimensional compact real Lie group $SU(2) \times S^1$ \cite{Gualtieri10}. This bi-Hermitian structure therefore satisfies Assumption \ref{assump:Gauduchon}.
We describe this structure in section \S \ref{subsect:bihermitian}. We then show that $I_\pm$-holomorphic and generalized holomorphic line bundles exist for this bi-Hermitian structure 
in section \S \ref{subsect:linebundleonHopf}. We finally consider rank-2 bundles in \S \ref{subsect:rank2onhopf}, giving examples of both $\alpha$-stable and $\alpha$-unstable
$I_\pm$-holomorphic and generalized holomorphic rank-2 bundles. In particular, we show that $\alpha$-stability can depend on the choice of $\alpha$ (see examples \ref{dependence-alpha:c2=0} and 
\ref{dependence-alpha:c2=1}). 

\subsection{Bi-Hermitian structure}\label{subsect:bihermitian}
There is a natural identification of $\CC^2$ with the quaternions $\bb H$:
\[(z_1, z_2) \mapsto z_1 + z_2 j.\]
Then $\tilde M = \bb H^\times$ is the Lie group of non-zero quarternions, with product and inverse given by
$$(z_1, z_2) \cdot (w_1, w_2) = (z_1w_1 - z_2 \bar w_2, z_1w_2 + z_2 \bar w_1) \text{ and } (z_1, z_2)^{-1} = \frac{1}{|z|^2}(\bar z_1, - z_2),$$
respectively, where $z = (z_1, z_2)$ and $|z|^2 = z_1\bar z_1 + z_2 \bar z_2$. In addition, the Hermitian metric
$$h: = \frac{1}{\pi|z|^2}(dz_1 d\bar z_1 + dz_2 d\bar z_2)$$
gives rise to a bi-invariant Riemannian metric $g$ on $\tilde M$.

The standard complex structure at $(1,0)$ induces the left- and right-invariant complex structures on $\tilde M$, which are respectively denoted $I_-$ and $I_+$. 
Note that $I_+$ corresponds to the standard complex structure on $\CC^2$. Referring to  \cite{Gualtieri10} (example 1.23), since $\tilde M$ is an even-dimensional real Lie group, $(g,I_+,I_-)$ 
is a bi-Hermitian structure on $\tilde M$ that corresponds to a generalized K\"ahler structure, and it descends to an even generalized K\"ahler structure $M$. 

In this section, we assemble a few technical facts about this bi-Hermitian structure that will be needed in sections \S \ref{subsect:linebundleonHopf} and \S \ref{subsect:rank2onhopf}.  
We first consider the complex structure $I_-$. The complexified tangent bundle of $M$ decomposes as  $TM \otimes \CC = T_{1,0}^-M \dsum T_{0,1}^-M$. 
A smooth frame for $T_{1,0}^-M$ is given by the left-invariant vector fields
\[ X_1 = z_1 \frac{\partial}{\partial z_1} + \bar z_2 \frac{\partial}{\partial \bar z_2}, X_2 = -\bar z_2\frac{\partial}{\partial \bar z_1} + z_1 \frac{\partial}{\partial z_2},\]
with dual frame 
\[ \alpha_1 = \frac{1}{|z|^2}(\bar z_1 dz_1 + z_2 d\bar z_2), \alpha_2 = \frac{1}{|z|^2}(-z_2d\bar z_1 + \bar z_1 dz_2)\]
 for $T^{1,0}_-M$, where $\alpha_i(X_j) = \delta_{ij}$ for $i,j = 1,2$.

\begin{lemma}\label{facts:I_-}
Facts about the complex structure $I_-$.
\begin{enumerate}
\item
$\partial_-(|z|^2) = |z|^2 \alpha_1$.

\item
$\bar\partial_-(\bar z_1/|z|^2) = \bar\partial_-(z_2/|z|^2) = 0$.

\item
$\partial_-\alpha_1 = 0$, $\partial_- \alpha_2 = - \alpha_1\wedge\alpha_2$, $\bar\partial_- \alpha_1 = \alpha_2 \wedge \bar \alpha_2$ and $\bar\partial_-\alpha_2 = \bar\alpha_1 \wedge \alpha_2$.

\item
The fundamental form of $g$ with respect to $I_-$ is
\[\omega_- = \frac{i}{2\pi}(\alpha_1 \wedge \bar\alpha_1 + \alpha_2 \wedge \bar\alpha_2)\]
and
\[d^c_- \omega_- = -\frac{1}{2\pi}(\bar\alpha_1 - \alpha_1) \wedge\alpha_2 \wedge \bar\alpha_2.\]
\end{enumerate}
\end{lemma}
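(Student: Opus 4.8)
The plan is to work entirely in the explicit left-invariant frame $\{X_1, X_2\}$ with dual coframe $\{\alpha_1, \alpha_2\}$ already written down in the excerpt, and simply to differentiate the given closed-form expressions. All four parts reduce to direct computation on $\tilde M = \mathbb H^\times$ using the standard holomorphic coordinates $(z_1,z_2)$ on $\mathbb C^2$; since everything in sight is left-invariant and built from $z_1,z_2,\bar z_1, \bar z_2$ and $|z|^2 = z_1\bar z_1 + z_2\bar z_2$, the computations descend to the quotient $M$ automatically.

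First I would do part (1): apply the $I_-$-holomorphic differential $\partial_-$ to $|z|^2$. The key technical point is that $\partial_-$ is defined relative to the \emph{left}-invariant complex structure, so $\partial_- f = (X_1 f)\,\alpha_1 + (X_2 f)\,\alpha_2$ for a function $f$. Computing $X_1(|z|^2)$ and $X_2(|z|^2)$ using the explicit vector fields $X_1 = z_1\partial_{z_1} + \bar z_2\partial_{\bar z_2}$ and $X_2 = -\bar z_2\partial_{\bar z_1} + z_1\partial_{z_2}$ gives $X_1(|z|^2) = |z|^2$ and $X_2(|z|^2) = 0$, yielding $\partial_-(|z|^2) = |z|^2\alpha_1$. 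Part (2) is then the conjugate computation: I would check $\bar\partial_- = (\bar X_1 \,\cdot)\,\bar\alpha_1 + (\bar X_2\,\cdot)\,\bar\alpha_2$ annihilates $\bar z_1/|z|^2$ and $z_2/|z|^2$, which follows from $\bar X_i(\bar z_1) , \bar X_i(z_2)$ together with part (1); equivalently these are the entries of the inverse matrix $(z_1,z_2)^{-1}$, which are $I_-$-holomorphic by construction since $I_-$ is the left-invariant structure.

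For part (3) I would compute the exterior derivatives $d\alpha_1, d\alpha_2$ directly from their coordinate formulas $\alpha_1 = |z|^{-2}(\bar z_1 dz_1 + z_2 d\bar z_2)$, $\alpha_2 = |z|^{-2}(-z_2 d\bar z_1 + \bar z_1 dz_2)$, then split each $d\alpha_i$ into its $(2,0)_-$ and $(1,1)_-$ parts with respect to $I_-$, using the coframe relations $dz_j, d\bar z_j$ expressed back in terms of $\alpha_k, \bar\alpha_k$. The cleaner route — and the one I expect to be the main bookkeeping obstacle — is to avoid raw coordinate differentiation and instead use the Maurer–Cartan structure equations of the Lie group: since $\alpha_1,\alpha_2$ are left-invariant, $d\alpha_i = -\tfrac12\sum c^i_{jk}\,\theta^j\wedge\theta^k$ where the structure constants come from the quaternion Lie algebra, and the commutators $[X_i,X_j]$ are read off the product formula. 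This turns part (3) into linear algebra in a fixed basis rather than messy calculus, and it is exactly the step where sign and index errors are most likely, so I would cross-check the four resulting identities against the requirement that $d = \partial_- + \bar\partial_-$ is consistent (i.e.\ $d\alpha_i$ has no $(0,2)_-$ component, reflecting integrability of $I_-$).

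Finally, part (4): the formula $\omega_- = \tfrac{i}{2\pi}(\alpha_1\wedge\bar\alpha_1 + \alpha_2\wedge\bar\alpha_2)$ is immediate from $\omega_-(\cdot,\cdot) = g(I_-\cdot,\cdot)$ together with the fact that $\{X_i\}$ is a unitary frame for $g$ up to the normalization in $h = \tfrac{1}{\pi|z|^2}(dz_1 d\bar z_1 + dz_2 d\bar z_2)$; I would verify the normalizing constant by evaluating $g(X_i, \bar X_j)$. Then $d^c_-\omega_- = I_- \circ d \circ I_-\,\omega_-$, and since $\omega_-$ is of type $(1,1)_-$ one has the standard identity $d^c_-\omega_- = i(\bar\partial_- - \partial_-)\omega_-$; applying $\partial_-$ and $\bar\partial_-$ to $\omega_-$ via the Leibniz rule and substituting the four derivative formulas from part (3) collapses the expression to $-\tfrac{1}{2\pi}(\bar\alpha_1 - \alpha_1)\wedge\alpha_2\wedge\bar\alpha_2$. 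The only nonroutine aspect here is tracking which wedge terms cancel, so I would organize the computation by first recording $\partial_-\omega_-$ and $\bar\partial_-\omega_-$ separately and only then combining them.
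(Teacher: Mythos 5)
Your proposal is correct and takes essentially the same route as the paper: work in the explicit invariant frames with $\partial_- f = X_1(f)\,\alpha_1 + X_2(f)\,\alpha_2$ for (1)--(2), obtain the structure equations $d\alpha_1 = \alpha_2\wedge\bar\alpha_2$, $d\alpha_2 = -\alpha_1\wedge\alpha_2 + \bar\alpha_1\wedge\alpha_2$ and split them by type for (3), and compute (4) from $d^c_-\omega_- = i(\bar\partial_- - \partial_-)\omega_-$ using (3). The only cosmetic difference is in (3): the paper derives $d\alpha_i$ by writing $dz_1 = z_1\alpha_1 - z_2\bar\alpha_2$, $dz_2 = z_2\bar\alpha_1 + z_1\alpha_2$ and solving $d(dz_i)=0$, which sidesteps both the quotient-rule differentiation and the Lie-bracket/Maurer--Cartan bookkeeping of your two variants, but all three yield the same structure equations.
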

\begin{proof}
We first note that the various differentials on functions can then be defined using the frames $\{X_1,X_2\}$ and $\{\alpha_1,\alpha_2\}$. For example,
$$\bar\partial_-f = \bar X_1(f) \bar\alpha_1 + \bar X_2(f) \bar\alpha_2$$
for any $f \in C^\infty(M)$. Hence,
\[ \partial_-(|z|^2) = X_1(|z|^2) \alpha_1 + X_2(|z|^2) \alpha_2 = |z|^2 \alpha_1,\]
giving (1); a similar computation gives (2).

We have $dz_1 = z_1\alpha_1 - z_2\bar \alpha_2$ and $dz_2 = z_2 \bar \alpha_1 + z_1\alpha_2$. Using equations $d(dz_1) = d(dz_2) = 0$ and their conjugates, we solve for  
$d\alpha_1$ and $d\alpha_2$ to obtain
$$d\alpha_1 = \alpha_2 \wedge \bar \alpha_2, d\alpha_2 = - \alpha_1 \wedge \alpha_2 + \bar\alpha_1\wedge \alpha_2.$$
It follows that $\partial_-\alpha_1 = 0$, $\partial_- \alpha_2 = - \alpha_1\wedge\alpha_2$, $\bar\partial_- \alpha_1 = \alpha_2 \wedge \bar \alpha_2$ and $\bar\partial_-\alpha_2 = \bar\alpha_1 \wedge \alpha_2$,
proving (3).

The metric $h$ can be written as
$$h = \frac{1}{\pi}(\alpha_1\bar\alpha_1 + \alpha_2\bar\alpha_2)$$
in the frame $\{ \alpha_1,\alpha_2\}$,
which implies that the corresponding K\"ahler form is
$$\omega_- = \frac{i}{2\pi}(\alpha_1 \wedge \bar\alpha_1 + \alpha_2 \wedge \bar\alpha_2).$$
Using property (2), a direct computation gives
$$d^c_- \omega_- = i(\bar\partial_- - \partial_-)\left(\frac{i}{2\pi}(\alpha_1 \wedge \bar\alpha_1 + \alpha_2 \wedge \bar\alpha_2)\right) = -\frac{1}{2\pi}(\bar\alpha_1 - \alpha_1) \wedge\alpha_2 \wedge \bar\alpha_2,$$
proving (4).
\end{proof}

We now turn to the complex structure $I_+$. A smooth frame for $T_{1,0}^+M$ is given by the right-invariant vector fields
\[Y_1 = z_1 \frac{\partial}{\partial z_1} + z_2 \frac{\partial}{\partial z_2}, Y_2 = -\bar z_2\frac{\partial}{\partial z_1} + \bar z_1 \frac{\partial}{\partial z_2},\]
with dual frame 
\[\beta_1 = \frac{1}{|z|^2}(\bar z_1dz_1 + \bar z_2dz_2), \beta_2 = \frac{1}{|z|^2}(-z_2dz_1 + z_1dz_2)\]
for $T^{1,0}_+$. Note that $\beta_i(Y_j) = \delta_{ij}$ for $i,j = 1,2$.

\begin{lemma}\label{facts:I_+}
Facts about the complex structure $I_+$.
\begin{enumerate}
\item
$\partial_+(|z|^2) = |z|^2 \beta_1$.

\item
$\partial_+\beta_1 = 0$, $\partial_+\beta_2 =\beta_1 \wedge \beta_2$, $\bar\partial_+ \beta_1 = - \beta_2\wedge \bar\beta_2$ and $\bar\partial_+\beta_2 = - \bar\beta_1\wedge \beta_2$.

\item
The fundamental form of $g$ with respect to $I_+$ is
\[\omega_+ = \frac{i}{2\pi}(\beta_1\wedge \bar \beta_1 + \beta_2 \wedge \bar\beta_2)\]
and
\[ d^c_+\omega_+ =  \frac{1}{2\pi}(\bar \beta_1 - \beta_1)\wedge \beta_2 \wedge \bar\beta_2.\]
\end{enumerate}
\end{lemma}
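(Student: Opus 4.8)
The plan is to prove Lemma~\ref{facts:I_+} in exact parallel with the proof of Lemma~\ref{facts:I_-}, exploiting the symmetry between the left-invariant frame $\{X_i,\alpha_i\}$ adapted to $I_-$ and the right-invariant frame $\{Y_i,\beta_i\}$ adapted to $I_+$. The key structural input is that $I_+$ is the standard complex structure on $\CC^2$, so the $(1,0)$-forms with respect to $I_+$ are genuine holomorphic $1$-forms in $dz_1,dz_2$ (no $d\bar z_i$ appears in $\beta_1,\beta_2$), which will make several computations cleaner than in the $I_-$ case.

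First I would establish part~(1) by the same frame computation as before: since $\bar\partial_+ f = \bar Y_1(f)\,\bar\beta_1 + \bar Y_2(f)\,\bar\beta_2$ and $\partial_+ f = Y_1(f)\,\beta_1 + Y_2(f)\,\beta_2$ for any $f\in C^\infty(M)$, I apply $Y_1,Y_2$ to $|z|^2 = z_1\bar z_1 + z_2\bar z_2$. A direct check gives $Y_1(|z|^2) = |z|^2$ and $Y_2(|z|^2) = 0$, yielding $\partial_+(|z|^2) = |z|^2\beta_1$. Next, for part~(2), I would inverting the relations defining $\beta_1,\beta_2$ to express $dz_1,dz_2$ in terms of the frame. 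Solving the linear system gives $dz_1 = z_1\beta_1 - z_2\beta_2$ and $dz_2 = z_2\beta_1 + z_1\beta_2$ (note that, in contrast to the $I_-$ case, these involve only the $\beta_i$ and not the $\bar\beta_i$, reflecting holomorphicity). Then from $d(dz_1) = d(dz_2) = 0$ together with the conjugate relations, I solve the resulting linear equations for $d\beta_1$ and $d\beta_2$, obtaining expressions of the form $d\beta_1 = -\beta_2\wedge\bar\beta_2$ and $d\beta_2 = \beta_1\wedge\beta_2 - \bar\beta_1\wedge\beta_2$; separating into $(2,0)$, $(1,1)$, and $(0,2)$ parts with respect to $I_+$ reads off $\partial_+\beta_1 = 0$, $\partial_+\beta_2 = \beta_1\wedge\beta_2$, $\bar\partial_+\beta_1 = -\beta_2\wedge\bar\beta_2$, and $\bar\partial_+\beta_2 = -\bar\beta_1\wedge\beta_2$.

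For part~(3), I would first note that the metric $h = \frac{1}{\pi|z|^2}(dz_1 d\bar z_1 + dz_2 d\bar z_2)$ rewrites as $h = \frac{1}{\pi}(\beta_1\bar\beta_1 + \beta_2\bar\beta_2)$ in the $\beta$-frame, since the frame is $g$-unitary up to the scalar; this gives $\omega_+ = \frac{i}{2\pi}(\beta_1\wedge\bar\beta_1 + \beta_2\wedge\bar\beta_2)$ immediately. Finally I compute $d^c_+\omega_+ = i(\bar\partial_+ - \partial_+)\omega_+$ by applying the differentiation rules from part~(2) to each wedge term and using the Leibniz rule, collecting the surviving terms into $\frac{1}{2\pi}(\bar\beta_1 - \beta_1)\wedge\beta_2\wedge\bar\beta_2$.

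I do not expect a serious obstacle here, since the statement is a package of explicit frame identities on an explicitly presented Lie group. The one point requiring genuine care is the bookkeeping in part~(2): correctly inverting for $dz_1,dz_2$, differentiating, and then disentangling the $d\beta_i$ into their $\partial_+$ and $\bar\partial_+$ components without sign errors, especially tracking the conjugate frame relations $d\bar z_i$. The computation of $d^c_+\omega_+$ in part~(3) is where those sign choices are tested, and getting the single surviving term (with cancellation of the $\beta_1\wedge\bar\beta_1$ contributions) is the natural consistency check that the earlier signs are right.
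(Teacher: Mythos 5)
Your overall strategy is exactly the paper's: the paper disposes of (1) and (2) by ``computations similar to those outlined in the proof of Lemma~\ref{facts:I_-}'' --- i.e.\ precisely the frame computation for $\partial_+$, the inversion of $dz_1,dz_2$ in the coframe, the equations $d(dz_i)=0$, and the splitting into types --- and proves (3) just as you do. However, the one step you write out explicitly in part (2) is wrong, and not harmlessly so. The correct inversions are
\[
dz_1 = z_1\beta_1 - \bar z_2\,\beta_2, \qquad dz_2 = z_2\beta_1 + \bar z_1\,\beta_2,
\]
obtained by inverting the matrix $\left(\begin{smallmatrix} \bar z_1 & \bar z_2 \\ -z_2 & z_1 \end{smallmatrix}\right)$ (of determinant $|z|^2$) that expresses $(\beta_1,\beta_2)$ in terms of $(dz_1,dz_2)$. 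Your formulas $dz_1 = z_1\beta_1 - z_2\beta_2$ and $dz_2 = z_2\beta_1 + z_1\beta_2$ leave the $\beta_2$-coefficients unconjugated; comparing them with the correct ones forces $z_2=\bar z_2$, so they are inconsistent with the definition of the coframe, not a matter of convention.

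This error would derail the computation rather than merely flip signs. With your formulas the coefficients are holomorphic, so no $d\bar z_i$ (hence no $\bar\beta_i$) can ever appear when you expand $d(dz_i)=0$: all the $\beta_i\wedge\beta_j$ terms cancel identically and you are left with $z_1\,d\beta_1 - z_2\,d\beta_2 = 0$ and $z_2\,d\beta_1 + z_1\,d\beta_2 = 0$, which force $(z_1^2+z_2^2)\,d\beta_i = 0$, i.e.\ $d\beta_1 = d\beta_2 = 0$ on a dense open set and hence everywhere --- contradicting the (correct) values you then assert. With the correct inversions, the anti-holomorphic coefficients $\bar z_1,\bar z_2$ are exactly what injects the conjugate terms: $d(dz_1)=0$ becomes
\[
z_1\left(d\beta_1 + \beta_2\wedge\bar\beta_2\right) + \bar z_2\left(\beta_1\wedge\beta_2 - \bar\beta_1\wedge\beta_2 - d\beta_2\right) = 0,
\]
and together with the analogous identity from $d(dz_2)=0$ one solves to get $d\beta_1 = -\beta_2\wedge\bar\beta_2$ and $d\beta_2 = \beta_1\wedge\beta_2 - \bar\beta_1\wedge\beta_2$, after which your type-splitting and your Leibniz computation of $d^c_+\omega_+$ in (3) go through verbatim. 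So your structural remark survives only in corrected form: indeed only $\beta_1,\beta_2$ (no $\bar\beta_i$) occur in $dz_1,dz_2$, but with anti-holomorphic coefficients; the same correction is also needed to justify the claim in (3) that $h = \frac{1}{\pi}(\beta_1\bar\beta_1 + \beta_2\bar\beta_2)$, since the cross terms cancel only when the coefficients are conjugated.
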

\begin{proof}
Properties (1) and (2) stem from computations similar to those outlined in the proof of lemma \ref{facts:I_-}.
Moreover, the metric $h$ is written as
$$h = \frac{1}{\pi}(\beta_1\bar\beta_1 + \beta_2\bar\beta_2)$$
in the frame $\{ \beta_1,\beta_2\}$, and the corresponding K\"ahler form is
$$\omega_+ = \frac{i}{2\pi}(\beta_1\wedge \bar \beta_1 + \beta_2 \wedge \bar\beta_2).$$
Using property (2), we obtain
$$d^c_+\omega_+ = i(\bar\partial_+ - \partial_+)\left(\frac{i}{2\pi}(\beta_1\wedge \bar \beta_1 + \beta_2 \wedge \bar\beta_2)\right) =  \frac{1}{2\pi}(\bar \beta_1 - \beta_1)\wedge \beta_2 \wedge \bar\beta_2,$$
proving (3).
\end{proof}

\begin{remark}\label{torsion-vol:hopf}
Referring to lemmas \ref{facts:I_-} and \ref{facts:I_+}, the torsion form $\gamma := d^c_+\omega_+ = -d^c_+\omega_-$ is given by
\begin{equation}\label{torsion:hopf}
\gamma = \frac{1}{2\pi}(\bar \beta_1 - \beta_1)\wedge \beta_2 \wedge \bar\beta_2 = \frac{1}{2\pi}(\bar\alpha_1 - \alpha_1) \wedge\alpha_2 \wedge \bar\alpha_2
\end{equation}
in terms of the frames $\{ \alpha_1,\alpha_2\}$ and $\{ \beta_1,\beta_2 \}$. Moreover, $\vol_g = \frac{1}{2}\omega_\pm^2$, implying the metric $g$ satisfies Assumption \ref{assump:Gauduchon}.
\end{remark}

We end with the following fact about the right- and left-invariant frames described above.
\begin{prop}\label{Bismut:Hopf}
The frames $\{\alpha_1,\alpha_2 \}$, $\{\beta_1,\beta_2\}$ and their conjugates are Bismut flat. That is,
\[ \nabla^- \alpha_i = \nabla^-\bar\alpha_i = \nabla^+ \beta_i = \nabla^+\bar\beta_i = 0,\]
$i = 1,2$, where $\nabla^\pm$ are the Bismut connections on $(M,g,I_\pm)$. This means, in particular, that 
\[ \bar\delta_+ \bar\alpha_i = \bar\delta_- \bar\beta_i = 0,\]
where $\bar\delta_\pm$ are the operators defined in \eqref{eq:Bismutextension}.
\end{prop}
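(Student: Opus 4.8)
The plan is to avoid computing $\nabla^\pm$ in the explicit coframes and instead recognize the statement as an instance of a standard Lie-theoretic fact on the covering group $\tilde M = \bb H^\times$. Recall that $g$ is bi-invariant and that, by construction, $I_-$ is left-invariant while $I_+$ is right-invariant; hence the coframe $\{\alpha_1,\alpha_2\}$ dual to the left-invariant frame $\{X_1,X_2\}$, the coframe $\{\beta_1,\beta_2\}$ dual to the right-invariant frame $\{Y_1,Y_2\}$, and all of their conjugates are invariant coframes. Since every tensor in sight is invariant, it descends to the quotient $M$, and because the Bismut connection is local and canonically attached to $(g,I_\pm)$ it commutes with the covering; thus it suffices to prove the flatness identities on $\tilde M$.

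On a Lie group there are two canonical flat connections $\nabla^{(\ell)}$ and $\nabla^{(r)}$, uniquely determined by the requirement that the left-invariant, respectively right-invariant, vector fields be parallel; in particular $\nabla^{(\ell)}$ parallelizes \emph{every} left-invariant tensor field and $\nabla^{(r)}$ every right-invariant one. The key step is to identify $\nabla^{(\ell)}$ with the Bismut connection $\nabla^-$ of $(M,g,I_-)$ and $\nabla^{(r)}$ with $\nabla^+$. Since the Bismut connection is characterized as the \emph{unique} connection compatible with the metric and the complex structure whose torsion is totally skew-symmetric, I only need to check these three properties for, say, $\nabla^{(\ell)}$. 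Metric and complex compatibility are immediate: $g$, being bi-invariant, is in particular left-invariant, and $I_-$ is left-invariant, so both are parallel for $\nabla^{(\ell)}$. For the torsion, on left-invariant $X,Y$ one has $T(X,Y) = -[X,Y]$, whence $g(T(X,Y),Z) = -g([X,Y],Z)$; this expression is totally antisymmetric in $X,Y,Z$ precisely because the bi-invariant metric is $\ad$-invariant. By uniqueness, $\nabla^- = \nabla^{(\ell)}$, and the symmetric argument gives $\nabla^+ = \nabla^{(r)}$.

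With this identification the main assertion is immediate: $\nabla^-$ annihilates every left-invariant form, so $\nabla^-\alpha_i = \nabla^-\bar\alpha_i = 0$, and $\nabla^+$ annihilates every right-invariant form, so $\nabla^+\beta_i = \nabla^+\bar\beta_i = 0$. For the concluding statement I would invoke the definition of the operators $\bar\delta_\pm$ in \eqref{eq:partialBismut}: each $\bar\delta_\pm$ is obtained from a Bismut connection by restriction to a $(0,1)$-subbundle followed by a projection, so the full vanishing $\nabla^-\bar\alpha_i = 0$ and $\nabla^+\bar\beta_i = 0$ forces $\bar\delta_+\bar\alpha_i = \bar\delta_-\bar\beta_i = 0$.

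The step I expect to require the most care is the identification of $\nabla^\pm$ with the Cartan connections $\nabla^{(\ell)},\nabla^{(r)}$ — in particular matching the torsion \emph{sign} so that the left connection is paired with $I_-$ and the right connection with $I_+$, and confirming that the connection defined via the Courant--Dorfman bracket in \eqref{Bismut} is indeed this canonical flat connection. Both points reduce to the total skew-symmetry of the torsion, i.e. to the $\ad$-invariance of the bi-invariant metric; once that is in hand the uniqueness characterization of the Bismut connection does all the remaining work, and no computation in the explicit frames of Lemmas~\ref{facts:I_-} and \ref{facts:I_+} is needed (though those could serve as an independent cross-check).
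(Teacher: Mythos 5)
Your proof is correct, but it takes a genuinely different route from the paper's. The paper argues inside the generalized-geometry formalism: using the description \eqref{Bismut} of $\nabla^\pm$ via the Courant--Dorfman bracket and the expression \eqref{ellpm} for $\ell_\pm$ in the invariant frames, it verifies by direct computation that $\mf X_j * \mf Y_k = \bar{\mf X}_j * \mf Y_k = \mf X_j * \bar{\mf Y}_k = \bar{\mf X}_j * \bar{\mf Y}_k = 0$ for all $j,k$ --- the inputs being that left- and right-invariant vector fields commute, that the relevant Lie derivatives vanish, and an explicit cancellation $d\left(\tfrac{1}{2\pi}\bar\alpha_1\right) - \iota_{X_1}\gamma = 0$ against the torsion form \eqref{torsion:hopf} --- whence $\nabla^+ Y_k = \nabla^+ \bar Y_k = \nabla^- X_k = \nabla^- \bar X_k = 0$ and the coframes are parallel by duality. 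You instead bypass \eqref{Bismut} entirely: you check that the left (resp.\ right) Cartan connection is compatible with the bi-invariant metric $g$ and with the left-invariant $I_-$ (resp.\ right-invariant $I_+$) and has totally skew-symmetric torsion (this is exactly $\ad$-invariance of $g$), and then invoke the uniqueness characterization of the Bismut connection --- which is precisely how the paper defines it --- to conclude $\nabla^- = \nabla^{(\ell)}$ and $\nabla^+ = \nabla^{(r)}$, after which parallelism of every invariant tensor is automatic. One remark: the ``torsion sign'' issue you flag as delicate is actually a non-issue, since the pairing of $\nabla^{(\ell)}$ with $I_-$ is forced by which complex structure is left-invariant, and the uniqueness statement never refers to the sign of the torsion $3$-form. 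As for what each approach buys: yours is frame-free, conceptually transparent, and generalizes verbatim to any even-dimensional compact Lie group with bi-invariant metric and left/right-invariant complex structures (the higher-dimensional setting alluded to via \cite{HuPre}); the paper's computation stays within the Courant-bracket machinery it has just set up, serves as a concrete cross-check of the torsion form computed in Remark \ref{torsion-vol:hopf}, and needs no appeal to the uniqueness theorem beyond its role as a definition. Your handling of the final assertion on $\bar\delta_\pm$ matches the paper's: both deduce it immediately from the full parallelism statements $\nabla^-\bar\alpha_i = \nabla^+\bar\beta_i = 0$, since each $\bar\delta$-operator applied to $\bar\alpha_i$ or $\bar\beta_i$ is a component of the unique Bismut connection preserving the corresponding $(0,1)$-bundle.
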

\begin{proof}
Let us first show that  
\[ \nabla^- X_k = \nabla^-\bar X_k = \nabla^+ Y_k = \nabla^+\bar Y_k = 0,\]
$j = 1,2$; the result will then follow since $\{\alpha_1,\alpha_2 \}$, $\{\beta_1,\beta_2\}$ are the dual frames of $\{X_1,X_2 \}$, $\{Y_1,Y_2\}$, respectively. 
We use the description  \eqref{Bismut} of the Bismut connection in terms of the Courant-Dorfman bracket to compute these derivatives. First, recall that
$$\ell_- = \Span_\CC\left\{\mf X_1 = X_1 - \frac{1}{2\pi}\bar\alpha_1, \mf X_2 = X_2 - \frac{1}{2\pi} \bar\alpha_2\right\}$$
and
$$\ell_+ = \Span_\CC\left\{\mf Y_1 = Y_1 + \frac{1}{2\pi}\bar\beta_1, \mf Y_2 = Y_2 + \frac{1}{2\pi} \bar\beta_2\right\}$$
by \eqref{ellpm}.
Since left-(right-)invariant vector fields generate right (left) actions, we have
$$\LLC_{X_j} \beta_k = \LLC_{X_j} \bar\beta_k = \LLC_{Y_j} \alpha_k = \LLC_{Y_j}\bar\alpha_k = 0 \text{ and } [X_j, Y_k] = [X_j, \bar Y_k] = 0.$$
Also, note that since $\{X_1,X_2,\bar{X}_1,\bar{X}_2\}$ is a smooth frame for $TM \otimes \CC$, it is enough to check that
$$\nabla^+_{X_j} Y_k = \nabla^+_{\bar X_j} Y_k = 0 \text{ and } \nabla^+_{X_j} \bar Y_k = \nabla^+_{\bar X_j} \bar Y_k = 0,$$
for all $j,k$, in order to show that $\nabla^+ Y_k = \nabla^+\bar Y_k = 0$ for all $k$.
Referring to \eqref{Bismut}, we have
\[ \mbox{$\nabla^+_{X_j} Y_k =  a\left( (\mf X_j * \mf Y_k)^+ \right)$ and $\nabla^+_{\bar X_j} Y_k =  a\left( (\bar{\mf X}_j * \mf Y_k)^+ \right)$},\]
as well as
\[ \mbox{$\nabla^+_{X_j} \bar Y_k =  a\left( (\mf X_j * \bar{\mf Y}_k)^+ \right)$ and $\nabla^+_{\bar X_j} \bar Y_k =  a\left( (\bar{\mf X}_j * \bar{\mf Y}_k)^+ \right)$}.\]
Now, by definition,
$$\mf X_1 * \mf Y_1 = [X_1, Y_1] + \LLC_{X_1}\left(\frac{1}{2\pi}\bar\beta_1\right) + \iota_{Y_1}\left(\frac{1}{2\pi} d\bar\alpha_1\ - \iota_{X_1} \gamma \right).$$
Since
\[ [X_1, Y_1] = \LLC_{X_1}\left(\frac{1}{2\pi}\bar\beta_1\right) = 0\]
and 
$$d\left(\frac{1}{2\pi} \bar \alpha_1\right) - \iota_{X_1} \gamma = \frac{1}{2\pi} \bar \alpha_2 \wedge \alpha_2 - \iota_{X_1}\left(\frac{1}{2\pi} (\bar\alpha_1 - \alpha_1)\wedge \alpha_2 \wedge \bar\alpha_2\right) = 0,$$
we obtain $\mf X_1 * \mf Y_1 = 0$.
Similar computations give 
\[ \mf X_j *\mf Y_k = \bar{\mf X}_j * {\mf Y}_k = \mf X_j * \bar{\mf Y}_k = \bar{\mf X}_j * \bar{\mf Y}_k = 0 \] 
for all $j,k$, implying that $\nabla^+_{X_j} Y_k = \nabla^+_{\bar X_j} Y_k = 0$ and $\nabla^+_{X_j} \bar Y_k = \nabla^+_{\bar X_j} \bar Y_k = 0$ for all $j,k$. 
One checks that $\nabla^- X_k = \nabla^-\bar X_k = 0$ the same way using the frame $\{Y_1,Y_2,\bar{Y}_1,\bar{Y}_2\}$.
\end{proof}

\subsection{Holomorphic line bundles}\label{subsect:linebundleonHopf}
On a Hopf surface, all line bundles are flat, since $H^2(M,\ZZ) = 0$, and are therefore topologically trivial. Nonetheless, there exist many holomorphic structures on the trivial line bundle $M \times \CC$.
Indeed, these holomorphic structures are parametrised by $\rm{Pic}(M) = \CC^*$, up to isomorphism, and can be constructed via factors of automorphic as follows. 
For any $\eta \in \CC^*$, define an action on $\tilde M \times \CC$ by $(z,v) \mapsto (\tau z, \eta v)$. The quotient 
\[ L_\eta := \tilde M \times \CC/ (z,v) \sim (\tau z, \eta v)\]
of $\tilde M \times \CC$ by this action is then a flat complex line bundle on $M$. 
Note that $L_\eta \otimes L_\xi = L_{\eta\xi}$ and $(L_\eta)^* = L_{\eta^{-1}}$.
Moreover, $L_\eta$ admits a holomorphic structure locally given by the trivial connection with respect to any complex structure on $M$. Here are some facts about this holomorphic structure.

\begin{prop}\label{line-bundles}
Let $u$ denote the constant section $u(z) = (z, 1)$ on $M \times \CC$. Then, $L_\eta$ is isomorphic to $M \times \CC$ endowed with the $I_\pm$-holomorphic structure $\bar\partial_\pm$
given by
\[ \bar\partial_+(u) = c\bar\beta_1 \otimes u\]
and 
\[ \bar\partial_-(u) = c\bar\alpha_1 \otimes u,\]
where $c = \frac{\ln \eta}{2\ln \tau}$. Moreover, 
\begin{equation}\label{def:deg_pm} 
\deg_\pm L_\eta := \frac{i}{4\pi\vol(M)}{\int_M F_\pm \wedge \omega_\pm} = \pm \frac{\ln |\eta|}{\ln \tau},
\end{equation}
where $\vol(M) = \int_M \vol_g$ and $F_\pm$ are the curvatures of the Chern connections of $\bar\partial_\pm$ with respect to any Hermitian metric on $M \times \CC$.
\end{prop}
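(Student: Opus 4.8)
The plan is to trivialize $L_\eta$ smoothly, read off the induced $I_\pm$-holomorphic structures from that trivialization, and then compute the two degrees by Chern--Weil theory with the most convenient Hermitian metric. Since $L_\eta$ is presented as the quotient of the trivial bundle $\tilde M\times\CC$ by the relation $(z,v)\sim(\tau z,\eta v)$, a nowhere-zero smooth section of $L_\eta$ is the same datum as a smooth map $\sigma\co\tilde M\to\CC^*$ satisfying the equivariance $\sigma(\tau z)=\eta\,\sigma(z)$. I would use the automorphic factor $\sigma(z)=|z|^{2c}=e^{c\ln|z|^2}$, which is globally defined and smooth on $\tilde M$ precisely because $|z|^2>0$ there; the identity $\sigma(\tau z)=|\tau|^{2c}\sigma(z)$ then forces $|\tau|^{2c}=\eta$, i.e. $c=\frac{\ln\eta}{2\ln\tau}$, as in the statement.

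First I would make the isomorphism $\Phi\co(M\times\CC,\bar\partial_\pm)\to L_\eta$ explicit: it carries the constant section $u$ to the section of $L_\eta$ represented by $\sigma$. As $L_\eta$ carries the flat holomorphic structure for which locally constant sections are holomorphic, the pulled-back operator on $M\times\CC$ satisfies $\bar\partial_\pm(u)=\frac{\bar\partial_\pm\sigma}{\sigma}\otimes u$. Conjugating Lemma \ref{facts:I_+}(1) and Lemma \ref{facts:I_-}(1) gives $\bar\partial_+(|z|^2)=|z|^2\bar\beta_1$ and $\bar\partial_-(|z|^2)=|z|^2\bar\alpha_1$, whence $\frac{\bar\partial_+\sigma}{\sigma}=c\,\bar\beta_1$ and $\frac{\bar\partial_-\sigma}{\sigma}=c\,\bar\alpha_1$, which is exactly the asserted $I_\pm$-holomorphic structure. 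A different branch of $\ln\eta$ only shifts $c$ by a purely imaginary constant, so the construction is well defined up to isomorphism and, as we will see, the degree depends only on $\mathrm{Re}(c)$ and hence only on $|\eta|$.

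For the degrees I would use that $\deg_\pm$ is independent of the Hermitian metric and compute with the trivial metric $|u|\equiv 1$. With this metric the Chern connection forms are $\theta_+=c\,\bar\beta_1-\bar c\,\beta_1$ and $\theta_-=c\,\bar\alpha_1-\bar c\,\alpha_1$, so $F_\pm=d\theta_\pm$. Feeding in $d\beta_1=-\beta_2\wedge\bar\beta_2$ (Lemma \ref{facts:I_+}(2)) and $d\alpha_1=\alpha_2\wedge\bar\alpha_2$ (Lemma \ref{facts:I_-}(3)) together with their conjugates collapses these to $F_+=2\,\mathrm{Re}(c)\,\beta_2\wedge\bar\beta_2$ and $F_-=-2\,\mathrm{Re}(c)\,\alpha_2\wedge\bar\alpha_2$. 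Wedging with $\omega_\pm$ annihilates the squared two-forms and leaves a real multiple of the top form, which is proportional to $\vol_g$ by Remark \ref{torsion-vol:hopf} ($\vol_g=\frac12\omega_\pm^2$). Substituting into the definition of $\deg_\pm L_\eta$ and cancelling $\vol(M)$ then produces a fixed real multiple of $\mathrm{Re}(c)$, giving the stated $\deg_\pm L_\eta=\pm\frac{\ln|\eta|}{\ln\tau}$; the structurally essential point is that the opposite signs of $d\beta_1$ and $d\alpha_1$ force the degree to flip sign between $I_+$ and $I_-$.

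Everything is routine once the automorphic factor is chosen, so the only real subtlety is conceptual rather than computational. Because $u$ is a smooth but not holomorphic frame, one must carefully extract the $(1,0)$-part of each Chern connection from metric compatibility rather than simply reading it off, and one must keep scrupulous track of which complex structure each frame $\{\beta_i\}$ or $\{\alpha_i\}$ and each exterior derivative refers to. I expect the sign bookkeeping in the curvature-and-degree step --- and verifying that the reduction to $\vol_g$ is carried out consistently for both $I_+$ and $I_-$ --- to be the main place where slips could occur.
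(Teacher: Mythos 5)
Your proposal is correct and follows essentially the same route as the paper's proof: the same automorphic factor $\sigma(z)=e^{c\ln|z|^2}$ serving as a global gauge transformation, the same identity $\bar\partial_\pm(u)=\sigma^{-1}\bar\partial_\pm\sigma\otimes u$ combined with Lemmas \ref{facts:I_+} and \ref{facts:I_-}, and the same Chern--Weil computation with the standard metric, connection forms $-\bar c\,\beta_1+c\,\bar\beta_1$ and $-\bar c\,\alpha_1+c\,\bar\alpha_1$, and curvatures $\pm 2\,\mathrm{Re}(c)$ times $\beta_2\wedge\bar\beta_2$, resp.\ $\alpha_2\wedge\bar\alpha_2$. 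Your added remarks on the branch-of-logarithm ambiguity and on metric-independence of the degree are consistent with (and slightly more careful than) the paper's treatment.
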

\begin{proof}
Note that $\sigma(z) = e^{c\ln|z|^2}$ defines a  smooth global section of $L_\eta$ (because $\sigma(\tau z) = \eta \sigma(z)$).
Since $\sigma$ is nowhere vanishing, it induces a global gauge transformation from $L_\eta$ to $M \times \CC$ that takes the
natural $I_\pm$-holomorphic structures on $L_\eta$ to the $I_\pm$-holomorphic structures $\bar\partial_\pm$ on $M \times \CC$ given by 
$\bar\partial_\pm(u)  = \bar\partial_\pm \sigma \cdot \sigma^{-1} \otimes u$ . Explicitly, since 
\[ \bar\partial_\pm \sigma = c \frac{\bar\partial_\pm (|z|^2)}{|z|^2}  \cdot \sigma, \] 
we have $\bar\partial_+(u) = c\bar\beta_1 \otimes u$ and $\bar\partial_-(u) = c\bar\alpha_1 \otimes u$ by lemmas \ref{facts:I_+} and \ref{facts:I_-}.

Let us now compute $\deg_\pm L_\eta$. Consider the standard Hermitian metric $h$ on $M \times \CC$, which is given by $h(z,v) = |v|$ . Then, $u$ is a Hermitian frame of $M \times \CC$,
implying that the Chern connections of $\bar\partial_\pm$ with respect to $h$ are given by the matrices $\Theta_+ = - \bar c \beta_1 + c\bar\beta_1$
and $\Theta_- = - \bar c \alpha_1 + c\bar\alpha_1$, respectively, in this frame. Consequently, the curvatures matrices of the Chern connections are
\[ F_+ = d\Theta_+ = \frac{\ln|\eta|}{\ln \tau} \beta_2\wedge\bar\beta_2\]
and 
\[F_- = d\Theta_- = -\frac{\ln|\eta|}{\ln\tau}\alpha_2 \wedge \bar\alpha_2.\]
Referring to lemmas \ref{facts:I_+} and \ref{facts:I_-} and remark \ref{torsion-vol:hopf}, we see that
\[ \frac{i}{4\pi}F_\pm\wedge \omega_\pm = \pm \frac{\ln|\eta|}{2\ln\tau} \omega_\pm \wedge \omega_\pm = \pm \frac{\ln|\eta|}{\ln\tau} \vol_g, \] 
implying that
\[\deg_\pm L_\eta = \frac{i}{4\pi\vol(M)}\int_M F_\pm \wedge \omega_\pm = \pm \frac{\ln|\eta|}{\ln \tau}.\]
Note that since the curvatures of Chern connections corresponding to different Hermitian metrics on $M \times \CC$ differ by $\partial_\pm \bar\partial_\pm$-exact forms, 
the expression $\int_M F_\pm \wedge \omega_\pm$ is in fact independent of $h$.
\end{proof}

\begin{remark}\label{O(m)}
Note that $M$ admits a natural projection $\pi$ onto $\PP^1$ given by $\pi(z_1,z_2) = [z_1:z_2]$, where $[z_1:z_2]$ are homogeneous coordinates on $\PP^1$.
This projection is holomorphic with respect to $I_+$. Consequently, if $\mathcal{O}_{\PP^1}(m)$ denotes the holomorphic line bundle on $\PP^1$ of degree $m$, 
its pullback $\pi^*\mathcal{O}_{\PP^1}(m)$ is an $I_+$-holomorphic line bundle on $(M,I_+)$.
Let $\mathcal{O}_\pm$ denote the sheaf of $I_\pm$-holomorphic functions on $(M,I_\pm)$. We set 
\[ \mathcal{O}_+(m) := \pi^*\mathcal{O}_{\PP^1}(m) \]
for all $m \in \ZZ$. Then, $\mathcal{O}_+(m)$ admits global holomorphic section if and only if $m \geq 0$, in which case these sections are homogeneous polynomials in $z_1$ and $z_2$ of degree $m$.
Let $p$ be such a polynomial. Then, $p(\tau z)  = \tau^m p(z)$, implying that $p$ is global holomorphic section of $L_{\tau^m}$. Thus, $\mathcal{O}_+(m) = L_{\tau^m}$
if $m \geq 0$. Nonetheless, $\mathcal{O}_+(-m) = \mathcal{O}_+(m)^* = L_{\tau^{-m}}$, implying that 
\[\mathcal{O}_+(m) = L_{\tau^m}\] 
for all $m \in \ZZ$.

Consider the inverse map $\iota: M \rightarrow M, (z_1,z_2) \mapsto (z_1,z_2)^{-1}$,
which is a biholomorphic map from $(M,I_-)$ to $(M,I_+)$. We set
\[ \mathcal{O}_-(m) := \iota^*\mathcal{O}_+(m) = L_{\tau^{-m}}\]
for all $m \in \ZZ$. The formula \eqref{def:deg_pm} for $\deg_\pm$ was therefore normalised to ensure that 
\[ \deg_\pm \mathcal{O}_\pm(m) = m.\] 
We finish with the observation that the only holomorphic line bundles on $(M,I_\pm)$ that have global holomorphic sections are $\mathcal{O}_\pm(m)$ with $m \geq 0$. 
Furthermore, if $L_\pm$ is any holomorphic line bundle on $(M,I_\pm)$, then $h^1(M,L_\pm) = 0$ unless $L_\pm = \mathcal{O}_\pm(m)$ with $m \neq -1$. 
For more facts about line bundles on Hopf surfaces, we refer the reader to \cite{Braam-Hurtubise,Moraru}.
\end{remark}

Clearly, any pair $(\bar\partial_+, \bar\partial_-)$ defines an $I_\pm$-holomorphic structure on $M \times \CC$. In fact, any such pair satisfies the commutation relation 
\eqref{eq:commutationrelation}, implying that it corresponds to a $\JJ$-holomorphic structure. 

\begin{prop}
 Any pair $(\bar\partial_+, \bar\partial_-)$ given above defines a $\JJ$-holomorphic vector bundle on $M$.
\end{prop}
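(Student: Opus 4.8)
The plan is to reduce the claim, via Proposition~\ref{prop:bihermitianholobundle}, to a single identity. Since $(\bar\partial_+,\bar\partial_-)$ is by hypothesis already an $I_\pm$-holomorphic structure on $M\times\CC$, that proposition tells us the triple $(M\times\CC,\bar\partial_+,\bar\partial_-)$ is $\JJ$-holomorphic if and only if the commutation relation \eqref{eq:commutationrelation}, $[\bar\partial_+,\bar\partial_-]=0$, holds. Because $V=M\times\CC$ has rank one, I would then invoke the line-bundle form of this relation from Example~\ref{exple:linebundlecommuteoperator}: writing $\bar\partial_\pm(u)=u\tensor\alpha_\pm$ with $\alpha_\pm\in C^\infty(T^{0,1}_\pm M)$, the relation \eqref{eq:commutationrelation} is equivalent to the single equation $\bar\delta_+\alpha_- + \bar\delta_-\alpha_+ = 0$, where $\bar\delta_\pm$ are the Bismut components introduced in \eqref{eq:partialBismut}.

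The first concrete step is to record the two one-forms explicitly. By Proposition~\ref{line-bundles}, with respect to the global frame $u(z)=(z,1)$ every such pair is of the form $\bar\partial_+(u)=c_+\bar\beta_1\tensor u$ and $\bar\partial_-(u)=c_-\bar\alpha_1\tensor u$ for constants $c_\pm\in\CC$; that is, $\alpha_+=c_+\bar\beta_1$ and $\alpha_-=c_-\bar\alpha_1$. The feature I want to exploit is that the coefficients are \emph{constant} and that $\bar\beta_1,\bar\alpha_1$ are members of the left- and right-invariant coframes of \S\ref{subsect:bihermitian}.

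The conclusion is then immediate from the Leibniz rule for $\bar\delta_\pm$ together with Proposition~\ref{Bismut:Hopf}. Since $c_\pm$ are constant, one has $\bar\delta_+\alpha_- = c_-\,\bar\delta_+\bar\alpha_1$ and $\bar\delta_-\alpha_+ = c_+\,\bar\delta_-\bar\beta_1$, with no derivative-of-coefficient terms surviving. Proposition~\ref{Bismut:Hopf} asserts precisely that these invariant coframes are Bismut-flat, so that $\bar\delta_+\bar\alpha_1 = 0$ and $\bar\delta_-\bar\beta_1 = 0$. Hence $\bar\delta_+\alpha_- + \bar\delta_-\alpha_+ = 0$, the commutation relation \eqref{eq:commutationrelation} holds, and Proposition~\ref{prop:bihermitianholobundle} yields that $(M\times\CC,\bar\partial_+,\bar\partial_-)$ is a $\JJ$-holomorphic bundle.

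I expect no serious obstacle here: the whole weight of the argument rests on the normalization carried out in the first step. Indeed, for a general (merely integrable) pair $\alpha_\pm$ the quantities $\bar\delta_\pm\alpha_\mp$ would pick up the $T^{0,1}_\mp M$-derivatives of the coefficient functions, and there is no reason for these to cancel; the commutation relation is \emph{not} automatic for arbitrary representatives. What makes the statement true on the Hopf surface is exactly that the invariant coframes simultaneously trivialize $T^{1,0}_\pm M$ and are parallel for the respective Bismut connections, so that the constant-coefficient representatives above have vanishing $\bar\delta_\pm$. The one point deserving a word of care is that reducing to these representatives is harmless, since $\JJ$-holomorphicity is a property of the flat $\bar L$-connection $\bar{\mc D}=\bar{\mc D}_+\dsum\bar{\mc D}_-$ and is therefore preserved under bundle isomorphism.
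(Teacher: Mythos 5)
Your proof is correct and takes essentially the same approach as the paper's: both reduce the claim, via Example \ref{exple:linebundlecommuteoperator}, to checking $\bar\delta_+\bar\alpha_1 = \bar\delta_-\bar\beta_1 = 0$, which is exactly the Bismut-flatness statement of Proposition \ref{Bismut:Hopf}. Your additional remarks (the Leibniz-rule role of the constant coefficients, and that $\JJ$-holomorphicity is preserved under the gauge transformation used to normalize the representatives) only make explicit what the paper's terse proof leaves implicit.
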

\begin{proof}
Suppose that $\bar\partial_+$ and $\bar\partial_-$ are given by $\bar\partial_+(u) = c_+\bar\beta_1 \otimes u$ and 
$\bar\partial_-(u) = c_-\bar\beta_1 \otimes u$, respectively, where $u(z) = (z,1)$ is the constant section 
on $M \times \CC$. Then, referring to example \ref{exple:linebundlecommuteoperator}, we just need to check that $\bar\delta_+ \bar\alpha_1 + \bar\delta_- \bar\beta_1 = 0$. 
But this follows directly from proposition \ref{Bismut:Hopf}.
\end{proof}

\subsection{Rank-$2$ bundles}\label{subsect:rank2onhopf}
In this section, we discuss the existence of $\alpha$-stable rank-2 vector bundles on the Hopf surface $M$, 
endowed with the generalized K\"ahler structure described in section \S \ref{subsect:bihermitian}.  
Let $V$ be a fixed smooth complex rank-2 vector bundle on $M$. Then, $c_1(V) = 0$ since $H^2(M,\mathbb{Z}) = 0$. Set $c_2(V) = c_2$. 
Note that $V$ admits holomorphic structures if and only if $c_2 \geq 0$ (see \cite{Braam-Hurtubise, Moraru}).

\begin{example}\label{c2=0}
Assume $c_2 = 0$ so that $V$ is the trivial rank-2 vector bundle $M \times \CC^2$ on $M$. Let $\{e_1,e_2\}$ be the standard smooth global frame of $V$ with
$e_1(z) = (z, (1,0))$ and $e_2(z) = (z,(0,1))$ for all $z \in M$.
We endow $V$ with the following 
$I_\pm$-holomorphic structure $(\bar\partial_+,\bar\partial_-)$.  We describe $\bar\partial_+$ in terms of the frame $\{e_1,e_2\}$: 
\[ \mbox{$\bar\partial_+(e_1) = \eta\bar\beta_1 \otimes e_1$ and $\bar\partial_+(e_2) = \xi\bar\beta_1 \otimes e_2$} \]
for some $\eta,\xi \in \CC$ with $\eta \neq \xi$. Recall that a smooth sub-line bundle $L$ of $V$ is said to be a holomorphic sub-line bundle of $(V,\bar\partial_+)$ if $\bar\partial_+$ maps 
$C^\infty(L)$ to $C^\infty(L \otimes T^{1,0}_+M)$. An easy calculation shows that $(V,\bar\partial_+)$ has only two holomorphic sub-line bundles $L_1$ and $L_2$, namely, the sub-line
bundles of $V$ spanned by $e_1$ and $e_2$, respectively (which stems from the fact that we have assumed  $\eta \neq \xi$).

Let us now consider another smooth global frame of $V$, namely, $\{ f_1 = e_1 + e_2,f_2 = e_1 - e_2 \}$. The holomorphic structure $\bar\partial_-$ is described in terms of this frame as follows:
\[ \mbox{$\bar\partial_-(f_1) = a\bar\alpha_1 \otimes f_1$ and $\bar\partial_-(f_2) = b\bar\alpha_1 \otimes f_2$}\]
for some $a,b \in \CC$ with $a \neq b$.
We then have
\[ \bar\partial_-(e_1) = \frac{1}{2} \bar\alpha_1 \otimes ((a+b)e_1 + (a-b)e_2)\]
and 
\[ \bar\partial_-(e_2) = \frac{1}{2}\bar\alpha_1 \otimes ((a-b)e_1 + (a+b)e_2),\]
implying that $L_1$ and $L_2$ are not holomorphic sub-line bundles of $(V,\bar\partial_-)$. Consequently, $(V,\bar\partial_+,\bar\partial_+)$ does not have an $I_\pm$-holomorphic sub-line bundle. This means that $(V,\bar\partial_+,\bar\partial_+)$ is trivially $\alpha$-stable, for any $\alpha \in (0,1)$.
\end{example}

\begin{example}\label{c2>0}
Suppose $c_2 > 0$. For any complex structure $I$ on $M$, there then exist $I$-holomorphic rank-2 bundles that do not admit coherent sub-sheaves (see \cite{Braam-Hurtubise, Moraru}).
Fix an $I_+$-holomorphic structure $\bar\partial_+$ on $V$ such that $(V,\bar\partial_+)$ does not admit proper coherent sub-sheaves. 
Therefore, given any $I_-$-holomorphic structure $\bar\partial_-$ on $V$, the $I_\pm$-holomorphic bundle $(V,\bar\partial_+,\bar\partial_-)$ 
has no proper coherent sub-sheaves, implying that it is $\alpha$-stable for any $\alpha \in (0,1)$. One can thus construct many examples of $\alpha$-stable $I_\pm$-holomorphic bundle this way. 
Nonetheless, there also exist $\alpha$-stable $I_\pm$-holomorphic bundle that do admit proper coherent sub-sheaves, as illustrated by example \ref{dependence-alpha:c2=1}.
\end{example}

The above two examples give us the following result about the existence of $\alpha$-stable $I_\pm$-holomorphic structures on $M$:
\begin{prop}\label{alpha-stability:hopf}
For any $c_2 \geq 0$, there exist $I_\pm$-holomorphic structures on $V$ that are $\alpha$-stable  for all $\alpha \in (0,1)$.
\end{prop}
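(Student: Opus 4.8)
The plan is to treat the two ranges $c_2 = 0$ and $c_2 > 0$ separately, in each case exhibiting an explicit $I_\pm$-holomorphic structure that admits \emph{no} proper coherent subsheaf at all, so that the $\alpha$-stability inequality $\mu_\alpha(\FFC) < \mu_\alpha(V)$ holds vacuously and hence uniformly in $\alpha \in (0,1)$. Throughout I would lean on property (3) of $\alpha$-stability: since $V$ has rank $2$, it suffices to rule out rank-$1$ coherent subsheaves, and by Definition \ref{defn:coherentsheaves} such a subsheaf $\FFC = (\FFC_+,\FFC_-)$ is, off a set $S$ of codimension at least $2$, a matching pair of holomorphic sub-line bundles of $(V,\bar\partial_+)$ and $(V,\bar\partial_-)$. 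So the whole task reduces to excluding such a matching pair.

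For $c_2 = 0$ I would invoke the explicit construction of Example \ref{c2=0}: on the trivial bundle $M \times \CC^2$ put $\bar\partial_+$ diagonal in the frame $\{e_1,e_2\}$ with distinct weights $\eta \neq \xi$, so that its only holomorphic sub-line bundles are $L_1 = \langle e_1\rangle$ and $L_2 = \langle e_2\rangle$, and put $\bar\partial_-$ diagonal in the rotated frame $\{f_1 = e_1+e_2,\, f_2 = e_1-e_2\}$ with distinct weights $a \neq b$. The computation there shows $L_1,L_2$ are not $\bar\partial_-$-holomorphic (and, symmetrically, $\langle f_1\rangle, \langle f_2\rangle$ are not $\bar\partial_+$-holomorphic), so no sub-line bundle is simultaneously $\bar\partial_+$- and $\bar\partial_-$-holomorphic on a codimension-$2$ complement. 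Hence $(V,\bar\partial_+,\bar\partial_-)$ has no proper coherent subsheaf and is $\alpha$-stable for every $\alpha$.

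For $c_2 > 0$ I would invoke Example \ref{c2>0}, which rests on the results of Braam--Hurtubise and Moraru guaranteeing, for a single complex structure, the existence of an $I_+$-holomorphic structure $\bar\partial_+$ on $V$ with no proper coherent subsheaf whatsoever. Pairing this $\bar\partial_+$ with an arbitrary $I_-$-holomorphic structure $\bar\partial_-$, the component $\FFC_+$ of any putative proper coherent subsheaf $\FFC = (\FFC_+,\FFC_-)$ would be a proper coherent subsheaf of $(V,\bar\partial_+)$ — which does not exist. Thus $(V,\bar\partial_+,\bar\partial_-)$ again has no proper coherent subsheaf and is $\alpha$-stable for all $\alpha$.

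I do not expect a genuine obstacle: the proposition is essentially a corollary of the two preceding examples, and the substantive content lives inside them. The one nontrivial input is the existence statement used when $c_2 > 0$, which I would simply cite; the remainder is bookkeeping against Definition \ref{defn:coherentsheaves}. The point worth emphasizing is that $\alpha$-stability here is obtained not by comparing slopes but by the stronger absence of any competing coherent subsheaf, which is exactly why the conclusion is independent of $\alpha$.
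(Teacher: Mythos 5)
Your proposal is correct and follows essentially the same route as the paper: the paper's proof of this proposition is literally a two-line appeal to Examples \ref{c2=0} and \ref{c2>0}, whose content you have reproduced faithfully — absence of any proper coherent subsheaf (no common sub-line bundle in the $c_2=0$ case, no $\bar\partial_+$-subsheaf at all in the $c_2>0$ case) makes $\alpha$-stability hold vacuously and hence uniformly in $\alpha$. The only difference is that you spell out the reduction to sub-line bundles and the matching condition of Definition \ref{defn:coherentsheaves}, which the paper leaves implicit inside the examples.
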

\begin{proof}
Follows directly from examples \ref{c2=0} and \ref{c2>0}.
\end{proof}

While the $I_\pm$-holomorphic bundles appearing in examples \ref{c2=0} and \ref{c2>0} are $\alpha$-stable for any $\alpha \in (0,1)$, 
there exist $I_\pm$-holomorphic bundle whose stability depends on the choice of $\alpha$, as illustrated by the next two examples.

\begin{example}\label{dependence-alpha:c2=0}
Assume $c_2=0$. We choose $I_\pm$-holomorphic structures $\bar\partial_\pm$ on $V$ as follows.
Let $\bar\partial_+$ be an $I_+$-holomorphic structure on $V$ such that $V_+ := (V,\bar\partial_+)$ is not isomorphic to a sum of two line bundles and is
given by a non-trivlal extension of the form
\[ 0 \rightarrow \mathcal{O}_+ \rightarrow V_+ \rightarrow \mathcal{O}_+(-m_+) \rightarrow 0, \]
with $m_+ \in \ZZ^{>0}$. This means in particular that $\mathcal{O}_+$ is the only $I_+$-holomorphic sub-line bundle of $V_+$, 
otherwise $V_+$ would be isomorphic to a sum of two line bundles, contradicting our assumption. 
Furthermore, let $\bar\partial_-$ be an $I_-$-holomorphic structure on $V$ such that $V_- := (V,\bar\partial_-)$ 
is given by a non-trivlal extension of the form
\[ 0 \rightarrow \mathcal{O}_- \rightarrow V_- \rightarrow \mathcal{O}_-(m_-) \rightarrow 0, \]
with $m_- \in \ZZ^{\geq 2}$. We assume the images of $\mathcal{O}_\pm$ in $V_\pm$ coincide as smooth sub-line bundles of $V$
so that $\mathcal{L} = (\mathcal{O}_+,\mathcal{O}_-)$ is an $I_\pm$-holomorphic sub-line bundle of $(V,\bar\partial_+,\bar\partial_-)$;
note that $\mathcal{L}$ is the only $I_\pm$-holomorphic sub-line bundle of $(V,\bar\partial_+,\bar\partial_-)$ 
since $\mathcal{O}_+$ is the only $I_+$-holomorphic sub-line bundle of $V_+$. 
Let 
\[ \alpha_0 =  \frac{m_-}{(m_+ + m_-)}.\]
Since $m_\pm$ are both positive integers, we have $0 <  \alpha_0 < 1$.
If we choose $\alpha \in (0,   \alpha_0)$, then
\[ \mu_\alpha(\mathcal{L})  = 0 < \frac{1}{2} \left( \alpha(- m_+) + (1-\alpha) m_- \right) = \mu_\alpha(V),\]
implying that $(V,\bar\partial_+,\bar\partial_-)$ is $\alpha$-stable. However, if we pick $\alpha \in [ \alpha_0,1)$,
then $\mu_\alpha(\mathcal{L})  = 0 \geq \mu_\alpha (V)$ and $(V,\bar\partial_+,\bar\partial_-)$ is not $\alpha$-stable.
The stability of $(V,\bar\partial_+,\bar\partial_-)$ thus depends on $\alpha$. 
We will see, in example \ref{stable-gen-hol}, that some of these $I_\pm$-holomorphic
structure $(\bar\partial_+,\bar\partial_-)$ correspond to $\JJ$-holomorphic structures on $V$. 
\end{example}

\begin{example}\label{dependence-alpha:c2=1}
Assume $c_2 = 1$. Let us choose $I_\pm$-holomorphic structures $\bar\partial_\pm$ on $V$ such that
both $V_\pm:=(V,\bar\partial_\pm)$ have determinant $\mathcal{O}_\pm$. This means in particular that $\deg_\pm V_\pm = 0$ so that $\deg_\alpha V = 0$. 
Suppose that both $V_\pm$ are given by extensions of the form
\[ 0 \rightarrow L_\pm \rightarrow V_\pm \rightarrow L_\pm^* \otimes I_{p,\pm} \rightarrow 0,\]
where $L_\pm$ are holomorphic line bundles and $I_{p,\pm}$ is the ideal of a point; such bundles can either be stable or unstable as holomorphic bundles on $(M,I_\pm)$
(see Theorem 5.2.2 in \cite{Braam-Hurtubise}). Let us choose $V_+$ to be stable
and $V_-$ to be unstable. Hence, for any holomorphic sub-line bundle $N_+$ of $V_+$, we have
\[ \deg_+(N_+) < \deg_+ V_+ = 0.\]
In fact, $L_+$ can be chosen such that
\[\deg_+ N_+ \leq \deg_+ L_+ < 0\] 
for any other holomorphic sub-line bundle $N_+$ (see Proposition 3.3.4 in \cite{Braam-Hurtubise}). In addition, $V_-$ can be given by a non-trivial extension of the form
\[ 0 \rightarrow \mathcal{O}_-(m_-) \rightarrow V_-\rightarrow \mathcal{O}_-(-m_-) \otimes I_{p,-} \rightarrow 0\]
with $m_- \in \mathbb{Z}^{>0}$ because $h^2(\mathcal{O}_-(-m_-)^* \otimes \mathcal{O}_-(m_-)) = 0$ (see Corollary 10 in \cite{Friedman}); 
in this case, any holomorphic sub-line bundle of $V_-$ must be of the form $\mathcal{O}_-(r)$ with $r \leq m_-$.
Finally, we can assume that the images of $L_+$ and $\mathcal{O}_-(m_-)$ in $V_+$ and $V_-$, respectively, coincide as smooth sub-line bundles of $V$
so that $\mathcal{L} = (L_+,\mathcal{O}_-(m_-))$ is an $I_\pm$-holomorphic sub-line bundle of $(V,\bar\partial_+,\bar\partial_-)$ with
\[ \deg_\alpha \mathcal{L} = \alpha \deg_+ L_+ + (1-\alpha)m_- = \alpha(\deg_+ L_+  - m_-) + m_-.\]
Let 
\[  \alpha_0 = \frac{m_-}{(m_- - \deg_+ L_+)} .\]
Note that $(m_- - \deg_+ L_+)  > m_-> 0$ so that $0 < \alpha_0  < 1$.
If one chooses $\alpha \in [\alpha_0,1)$, then $\deg_\alpha \mathcal{L} \geq 0$,
implying that $(V,\bar\partial_+,\bar\partial_-)$ is not $\alpha$-stable. On the other hand, if one chooses $\alpha \in (0,\alpha_0)$, 
then for any $I_\pm$-holomorphic sub-line bundle $\mathcal{N} = (N_+,\mathcal{O}(r))$ of $V$, we have
\[ \deg_\alpha \mathcal{N} = \alpha \deg_+ N_+ + (1-\alpha)r \leq \alpha \deg_+ L_+ + (1-\alpha)m_- = \alpha(\deg_+ L_+  - m_-) + m_- < 0,\]
implying that $(V,\bar\partial_+,\bar\partial_-)$ is $\alpha$-stable. The stability of $(V,\bar\partial_+,\bar\partial_-)$ therefore depends on $\alpha$.
\end{example}

We end this section by considering the existence of $\alpha$-stable $\JJ$-holomorphic bundles. We show in particular that such bundles always exist in the following

\begin{example}\label{stable-gen-hol}
Let us show that some of the $I_\pm$-holomorphic structures described in example \ref{dependence-alpha:c2=0} satisfy the commutation relation \eqref{eq:commutationrelation}.
In this case, the underlying complex vector bundle $V$ is topologically trivial. Let $\{ e_1,e_2 \}$ be the smooth global frame on $M \times \CC^2$ described in example \ref{c2=0}
and let $L_1$ and $L_2$ be the sub-line bundles spanned by $e_1$ and $e_2$, respectively. We put the following $I_\pm$-holomorphic structures on $L_1$ and $L_2$. 
We choose the trivial $I_\pm$-holomorphic structure given by the trivial connection with respect to both $I_\pm$ for $L_1$. As for $L_2$, we choose $(\bar\partial_+^{L_2},\bar\partial_-^{L_2})$ given by
\[ \mbox{$ \bar\partial_+^{L_2}(e_2) = -\frac{1}{2} m_+(\bar\beta_1 \otimes e_2)$  and $\bar\partial_-^{L_2}(e_2) = \frac{1}{2} m_- (\bar\alpha_1 \otimes e_2)$}.\]
Note that  $\bar\partial_+^{L_2}$ and $\bar\partial_-^{L_2}$ give rise to the holomorphic bundles $\mathcal{O}_+(-m_+)$ and $\mathcal{O}_-(m_-)$, respectively.
Moreover, since $V_+$ and $V_-$ are given by non-trivial extensions, they correspond to non-zero elements $\varphi_+ \in H^1(M,\mathcal{O}_+(-m_+)^* \otimes \mathcal{O}_+)$ and
$\varphi_- \in H^1(M,\mathcal{O}_-(m_-)^* \otimes \mathcal{O}_-)$, respectively. Let us choose $\varphi_\pm$ such that
\[ \mbox{$\varphi_+(e_2) = -\frac{1}{2} m_+(\bar\beta_1 \otimes e_1)$ and $\varphi_-(e_2) = \frac{1}{2} m_-(\bar\alpha_1 \otimes e_1)$}.\]
A direct computation gives $\bar\partial_\pm \varphi_\pm = 0$, implying that $\varphi_\pm$ are holomorphic sections. Let us check it for $\varphi_+$. 
Since $\bar\beta_1$ is Bismut flat, we have 
\[ \bar\partial_+^{L_2^*\otimes \mathcal{O}_+} (\varphi_+) = -\frac{1}{2} m_+(\bar\beta_1 \otimes  \bar\partial_+^{L_2^*}(e_2^*) \otimes e_1) = 
-\frac{1}{4} m_+^2(\bar\beta_1 \wedge \bar\beta_1 \otimes e_2^* \otimes e_1) = 0.\]
We then choose the $I_\pm$-holomorphic structure $(\bar\partial_+,\bar\partial_-)$ on $V$ given by
\[ \bar\partial_\pm = 
\begin{pmatrix}
0 & \varphi_\pm \\
\medskip
0 & \bar\partial_\pm^{L_2}
\end{pmatrix}.\]
Then, one easily checks that $(\bar\partial_+,\bar\partial_-)$ satisfies the commutation relation \eqref{eq:commutationrelation}. 
Indeed, 
\[\bar\partial_+ \circ \bar\partial_-(e_1) + \bar\partial_- \circ \bar\partial_+(e_1) = 0\] 
since $\bar\partial_\pm(e_1) = 0$; moreover
\[ \bar\partial_+ \circ \bar\partial_-(e_2) + \bar\partial_- \circ \bar\partial_+(e_2)   = 
\bar\partial_+\left(\frac{1}{2} m_-\bar\alpha_1 \otimes (e_1 + e_2) \right) + \bar\partial_-\left(-\frac{1}{2} m_+\bar\beta_1 \otimes (e_1 + e_2) \right) \]
\[ = \frac{1}{4} m_+ m_-(\bar\alpha_1 \otimes  \bar\beta_1 \otimes e_1)  + \frac{1}{4} m_+ m_-(\bar\beta_1 \otimes  \bar\alpha_1 \otimes e_1) = 0.\]
Consequently, $(V,\bar\partial_+,\bar\partial_-)$ corresponds to a $\JJ$-holomorphic structure on $V$, which is $\alpha$-stable when 
$0 < \alpha < \frac{m_-}{(m_+ + m_-)} $.
\end{example}

\begin{remark}
It is shown in \cite{Gualtieri-Hu-Moraru} that $\JJ$-holomorphic bundles on a Hopf surface endowed with the generalized K\"ahler structure described in \S \ref{subsect:bihermitian} are topologically flat,
implying in particular that their underlying complex vector bundle $V$ has $c_2(V) = 0$. No $I_\pm$-holomorphic structures $(\bar\partial_+,\bar\partial_+)$ can then correspond to $\JJ$-holomorphic structures when $c_2>0$, implying that such structures do not satisfy the commutation relation \eqref{eq:commutationrelation}.
In addition, a direct computation shows that the $I_\pm$-holomorphic structures described in example \ref{c2=0} also do not satisfy the commutation relation  \eqref{eq:commutationrelation}. 
Consequently, not all $I_\pm$-holomorphic structures correspond to $\JJ$-holomorphic structures.
\end{remark}


\section{Proof of Theorem \ref{thm:setKHcorrespondence}}\label{sect:correspondenceforIpm}

Let $(M,g,I_+,I_-)$ be a compact bi-Hermitian manifold whose Riemannian metric $g$ satisfies Assumption \ref{assump:Gauduchon}. 
In this section, we prove Theorem \ref{thm:setKHcorrespondence}, 
which states that an $I_\pm$-holomorphic bundle on $M$ admits an $\alpha$-Hermitian-Einstein metric if and only if it is 
$\alpha$-polystable for any $\alpha \in (0,1)$. Our proof follows Chapters 2 and 3 of \cite{LubkeTeleman1995}.

Fix $\alpha \in (0,1)$. Let $V$ be a complex vector bundle on $M$ and $h$ be a Hermitian metric on $V$. 
Given an $I_\pm$-holomorphic structure $(\bar\partial_+,\bar\partial_-)$ on $V$, denote $F^h_\pm$ the curvatures of the
Chern connections $\nabla^C_\pm$ on $V$ corresponding to $h$. We then associate to $(V,\bar\partial_+,\bar\partial_-)$ two \emph{mean curvatures} $$K_\pm := \sqrt{-1} \Lambda_\pm (F^h_\pm)$$
and an \emph{$\alpha$-mean curvature} defined by
\begin{equation}\label{eq:alpha-mean-curv}
 K_\alpha := \alpha K_+ + (1-\alpha)K_-.
\end{equation}
The $\alpha$-Hermitian-Einstein equation \eqref{eq:alphaHE} can be rewritten in terms of the $\alpha$-mean curvature as
\begin{equation}\label{eq:alphaHE-new}
 K_\alpha  = \lambda \id_V.
\end{equation}
The version of the $\alpha$-Hermitian-Einstein equation we use throughout this section is equation \eqref{eq:alphaHE-new}.

The implications of Theorem \ref{thm:setKHcorrespondence} are proven in separate sections, namely \S  \ref{subsect:HEtostable} and \S \ref{subsect:stabletoHE}.
\subsection{$\alpha$-Hermitian-Einstein implies $\alpha$-polystability}\label{subsect:HEtostable}
In this section, we prove that any $I_\pm$-holomorphic line bundle  admits $\alpha$-Hermitian-Einstein metrics for all $\alpha \in (0,1)$ (see Corollary \ref{line-bundle:stable}). 
We also show that if an $I_\pm$-holomorphic vector bundle $(V,\bar\partial_+,\bar\partial_-)$ admits an $\alpha$-Hermitian-Einstein metric, then it is $\alpha$-polystable
(see Corollary \ref{coro:injectivityofbarK}).
Note that our proofs follow closely the presentation of Chapter 2 in \cite{LubkeTeleman1995}. 
We thus only provide explicit details where the proofs differ and refer the reader to  \cite{LubkeTeleman1995} for the rest. Before proving Corollaries \ref{line-bundle:stable}
and \ref{coro:injectivityofbarK}, we need to define some operators.

When the $I_\pm$-holomorphic structures $\bar\partial_\pm$ are fixed on $V$, the $\alpha$-Hermitian-Einstein equation \eqref{eq:alphaHE-new} is an equation of the Hermitian metric $h$ on $V$. In this case, we define
\[ P_{\pm,h} := \sqrt{-1} \Lambda_\pm\bar\partial_\pm \partial_\pm\] 
and 
\[ P^\alpha_h := \alpha P_{+,h} + (1-\alpha)P_{-,h},\]
where $\nabla^C_\pm = \partial_\pm + \bar\partial_\pm$ are the Chern connections corresponding to $h$. 
Here are some technical facts about the operator $P^\alpha_h$.
Following the arguments in \S 7.2 of \cite{LubkeTeleman1995}, the symbol of $P^\alpha_h$ is given by
\[ \sigma(P^\alpha_h)(x, u)(v) = \left(\alpha\left|u^{1,0}_+\right|^2 + (1-\alpha)\left|u^{1,0}_-\right|^2\right)\cdot v, \]
for $u \in T_x^*M$ and $v \in V_x$,
where $u_\pm^{1,0}$ denotes the $(1,0)$-part of $u$ under $I_\pm$. Thus, $P^\alpha_h$ is elliptic. Since $\sigma(P^\alpha_h)$ is self-adjoint, $\ind(P^\alpha_h) = 0$. 
Choosing local coordinates, $P^\alpha_h$ is a second order operator with negative definite leading coefficients.  
The maximum principle then induces the following analogue of Lemma 7.2.7 in \cite{LubkeTeleman1995}.
\begin{lemma}\label{lemma:PalphaonCinfty}
 The operator $P^\alpha_h : C^\infty(M) \to C^\infty(M)$ satisfies the following:
\begin{enumerate}
 \item $\ker(P^\alpha_h) \cong \CC$ consists of constant functions on $M$;
 \item $\displaystyle{\img(P^\alpha_h) = \left\{f: \int_M f \vol_g = 0 \right\}}$;
 \item $P^\alpha_h$ is self-adjoint.
\end{enumerate}
\qed
\end{lemma}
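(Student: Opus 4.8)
The plan is to follow the pattern of Lemma 7.2.7 in \cite{LubkeTeleman1995}, treating the two terms $P_{+,h}$ and $P_{-,h}$ separately and recombining them in the convex combination that defines $P^\alpha_h$. Since ellipticity of $P^\alpha_h$, self-adjointness of its symbol, and $\ind(P^\alpha_h)=0$ are already in hand, the only new ingredient is the way the Gauduchon hypothesis controls the lower-order behaviour. For (1), I first note that each $P_{\pm,h}=\sqrt{-1}\Lambda_\pm\bar\partial_\pm\partial_\pm$ annihilates constants, so $P^\alpha_h$ carries no zeroth-order term and is, in local coordinates, a second-order elliptic operator with negative-definite leading coefficients and no free term. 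On the compact manifold $M$ the maximum principle then forces every $f\in\ker P^\alpha_h$ to be constant; as constants visibly lie in the kernel, $\ker P^\alpha_h\cong\CC$.

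The analytic heart is the identity $\int_M P_{\pm,h}f\,\vol_g=0$ for all $f$. I would prove it by writing $P_{\pm,h}f\,\vol_g=\frac{\sqrt{-1}}{(n-1)!}\,\bar\partial_\pm\partial_\pm f\wedge\omega_\pm^{n-1}$ and integrating by parts twice; since $\partial_\pm$ (resp. $\bar\partial_\pm$) applied to a form of bidegree $(n,n-1)$ (resp. $(n-1,n)$) already equals its exterior derivative, the boundary-free Stokes theorem leaves only a multiple of $\int_M f\,\partial_\pm\bar\partial_\pm\omega_\pm^{n-1}$, which vanishes exactly by the Gauduchon condition $dd^c_\pm\omega_\pm=0$, that is, $\partial_\pm\bar\partial_\pm\omega_\pm^{n-1}=0$. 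Taking the $\alpha$-combination gives $\int_M P^\alpha_h f\,\vol_g=0$, hence $\img(P^\alpha_h)\subseteq\{f:\int_M f\,\vol_g=0\}$. Because $\ind(P^\alpha_h)=0$ and $\dim\ker P^\alpha_h=1$ by (1), the cokernel is one-dimensional, so $\img(P^\alpha_h)$ has codimension one and must coincide with the codimension-one space on the right, proving (2).

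For (3) I would again reduce to the single operators $P_{\pm,h}$ and compute their formal $L^2(\vol_g)$-adjoints by the same double integration by parts, now pushing the derivatives onto the test function and onto $\omega_\pm^{n-1}$; the Gauduchon identities $\partial_\pm\bar\partial_\pm\omega_\pm^{n-1}=0$ are precisely what cancels the torsion contributions and identifies $(P_{\pm,h})^*$ with $P_{\pm,h}$, after which the real convex combination stays self-adjoint. Granting (3), statement (2) also follows abstractly, since a self-adjoint elliptic operator on a compact manifold has closed range and $\img(P^\alpha_h)=\ker\big((P^\alpha_h)^*\big)^{\perp}=(\ker P^\alpha_h)^{\perp}=\{f:\int_M f\,\vol_g=0\}$.

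The step I expect to be the main obstacle is exactly this self-adjointness. All of the non-K\"ahler difficulty is concentrated in the first-order \emph{torsion} terms thrown up by the integrations by parts: these do not cancel for an arbitrary Hermitian metric, and it is only the Gauduchon property with respect to each $I_\pm$ separately that makes them disappear. Carefully bookkeeping bidegrees and signs across the two complex structures, and checking that the $\alpha$-weighted sum reintroduces no net first-order term, is the crux; by contrast the maximum-principle and index computations underlying (1) and (2) are routine.
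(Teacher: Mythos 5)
Your treatment of parts (1) and (2) is correct and is, in substance, the argument the paper intends: the paper itself records only the symbol computation, ellipticity, $\ind(P^\alpha_h)=0$ and the maximum principle, and then states the lemma as an analogue of Lemma 7.2.7 of \cite{LubkeTeleman1995} with no further details. Your double integration by parts, reducing $\int_M P_{\pm,h}(f)\,\vol_g$ to a multiple of $\int_M f\,\partial_\pm\bar\partial_\pm\omega_\pm^{n-1}=0$, is exactly how Assumption \ref{assump:Gauduchon} enters, and passing from the inclusion $\img(P^\alpha_h)\subseteq\{f:\int_M f\,\vol_g=0\}$ to equality via $\dim\ker P^\alpha_h=1$ and $\ind P^\alpha_h=0$ is the standard conclusion.

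Part (3) is where your proposal has a genuine gap, located exactly at the step you flag as the crux: the Gauduchon condition does \emph{not} cancel the torsion terms in the adjointness computation. For real $f,g$ one finds, with $c$ a nonzero universal constant,
\[
\langle P_{\pm,h}f,\,g\rangle_{L^2}-\langle f,\,P_{\pm,h}g\rangle_{L^2}
\;=\;c\int_M\bigl(g\,d^c_\pm f-f\,d^c_\pm g\bigr)\wedge d\omega_\pm^{n-1}.
\]
Writing $g\,d^c_\pm f-f\,d^c_\pm g=2g\,d^c_\pm f-d^c_\pm(fg)$, the Gauduchon identity $\partial_\pm\bar\partial_\pm\omega_\pm^{n-1}=0$ kills only the exact piece; what survives is $2c\int_M g\,d^c_\pm f\wedge d\omega_\pm^{n-1}$, and this vanishes for all $f,g$ if and only if $d\omega_\pm^{n-1}=0$, i.e. the metric is \emph{balanced}, a strictly stronger condition than Gauduchon. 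On the paper's own examples, the Hopf surfaces of Section \ref{sect:Hopfexample}, we have $n=2$, where balanced means K\"ahler, so $d\omega_\pm\neq0$ and neither $P_{+,h}$ nor $P_{-,h}$ is formally self-adjoint; nor can the convex combination rescue this, since self-adjointness of $P^\alpha_h$ for two distinct values of $\alpha$ would force self-adjointness of each $P_{\pm,h}$, hence balancedness. So your proof of (3) fails, and with it your alternative abstract derivation of (2) from (3) — harmless only because your index argument for (2) is independent and correct. What the Gauduchon hypothesis actually yields, and all that the paper ever uses downstream (Corollary \ref{line-bundle:stable}, Propositions \ref{prop:openness} and \ref{prop:fLp2boundsbyf1}), are the weaker facts that $\int_M P^\alpha_h(u)\,\vol_g=0$ for all $u$ — equivalently, constants lie in $\ker\bigl((P^\alpha_h)^*\bigr)$ — and the positivity $\int_M u\,P^\alpha_h(u)\,\vol_g\vargeq0$, where the torsion contribution dies because $u\,\partial_\pm u=\tfrac12\partial_\pm(u^2)$ is exact. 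The honest substitute for item (3), and the statement that makes the range argument in (2) work abstractly, is $\ker\bigl((P^\alpha_h)^*\bigr)=\CC$, not self-adjointness of $P^\alpha_h$ itself; note the paper supplies no argument for (3), so there is nothing there for your cancellation claim to fall back on.
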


We can now prove the following analogue of Corollary 2.1.6 in \cite{LubkeTeleman1995}.
\begin{corollary}\label{line-bundle:stable}
$I_\pm$-holomorphic line bundles  admit $\alpha$-Hermitian-Einstein metrics for all $\alpha \in (0,1)$.
\end{corollary}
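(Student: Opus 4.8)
The plan is to solve the $\alpha$-Hermitian-Einstein equation explicitly on a line bundle by reducing it to a scalar PDE governed by the operator $P^\alpha_h$, and then invoke Lemma~\ref{lemma:PalphaonCinfty}. First I would fix an arbitrary background Hermitian metric $h_0$ on the $I_\pm$-holomorphic line bundle $L=(V,\bar\partial_+,\bar\partial_-)$ with $\rk V=1$. Any other metric can be written as $h=e^{2\phi}h_0$ for a real function $\phi\in C^\infty(M)$, since $\End(L)$ is canonically trivial. The key computation is to track how the mean curvatures transform under this conformal change: for each sign, the Chern curvature satisfies $F^h_\pm=F^{h_0}_\pm+2\,\partial_\pm\bar\partial_\pm\phi$, so that $K_\pm^h=K_\pm^{h_0}-2\,P_{\pm,h_0}(\phi)$ (up to the sign convention in the definition of $P_{\pm,h}=\sqrt{-1}\Lambda_\pm\bar\partial_\pm\partial_\pm$). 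Taking the $\alpha$-combination \eqref{eq:alpha-mean-curv} gives $K_\alpha^h=K_\alpha^{h_0}-2\,P^\alpha_{h_0}(\phi)$.

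Next I would observe that since $L$ has rank one, the $\alpha$-Hermitian-Einstein equation \eqref{eq:alphaHE-new} reads $K_\alpha^h=\lambda\,\id_V$, which is now a scalar equation for the single function $\phi$, namely
\[
 2\,P^\alpha_{h_0}(\phi)=K_\alpha^{h_0}-\lambda.
\]
The constant $\lambda$ is forced upon us: integrating both sides against $\vol_g$ and using part~(2) of Lemma~\ref{lemma:PalphaonCinfty}, which says $P^\alpha_{h_0}$ has image equal to the mean-zero functions, we must have $\int_M(K_\alpha^{h_0}-\lambda)\vol_g=0$. This determines $\lambda=\frac{1}{\Vol(M)}\int_M K_\alpha^{h_0}\,\vol_g$, which is nothing but (a multiple of) the $\alpha$-degree $\deg_\alpha(L)$ divided by the volume, and in particular is real because the $\deg_\pm$ are real. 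With this choice, the right-hand side $K_\alpha^{h_0}-\lambda$ is mean-zero, hence lies in $\img(P^\alpha_{h_0})$ by Lemma~\ref{lemma:PalphaonCinfty}(2), so a solution $\phi$ exists. The resulting metric $h=e^{2\phi}h_0$ is then $\alpha$-Hermitian-Einstein.

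The step I expect to carry the real content is verifying the curvature transformation law $K_\pm^h=K_\pm^{h_0}-2P_{\pm,h_0}(\phi)$ simultaneously for both complex structures $I_+$ and $I_-$; this is where the fact that $P^\alpha_h$ is built as the $\alpha$-weighted sum of the two single-sign operators $P_{\pm,h}$ becomes essential, and where Assumption~\ref{assump:Gauduchon} (Gauduchon with respect to both $I_\pm$, with the common volume form $\vol_g=\frac{1}{n!}\omega_\pm^n$) guarantees that $\Lambda_\pm$ and the contraction with $\omega_\pm^{n-1}$ are compatible with a single integration measure, so that the solvability criterion from Lemma~\ref{lemma:PalphaonCinfty} applies verbatim. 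The remaining verification—that the image condition and the constant $\lambda$ match up—is then immediate. This mirrors exactly Corollary~2.1.6 of \cite{LubkeTeleman1995}, the only genuine difference being that the single Gauduchon Laplacian there is replaced by the elliptic operator $P^\alpha_h$, whose relevant analytic properties have already been recorded in Lemma~\ref{lemma:PalphaonCinfty}.
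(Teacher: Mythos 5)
Your proposal is correct and takes essentially the same approach as the paper's proof: a conformal change $h=e^{k}h_0$ (your $k=2\phi$) reduces the line-bundle equation to a scalar equation $P^\alpha(k)=c\,(K_\alpha^{h_0}-\lambda)$, with $\lambda$ fixed by the mean-zero condition and solvability supplied by Lemma~\ref{lemma:PalphaonCinfty}(2). The remaining differences (the factor of $2$, the sign convention, and writing $P^\alpha_{h_0}$ rather than $P^\alpha_{h}$, which agree when acting on functions of a line bundle) are purely notational.
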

\begin{proof}
 Let $(V,\bar\partial_+,\bar\partial_-) $ be an $I_\pm$-holomorphic line bundle on $M$ and let $h_0$ be a Hermitian metric on $V$. Therefore, $K_\alpha^{h_0} = f id_V$ for some function $f: M \to \RR$.
 Define $h = e^k h_0$ for some $k : M \to \RR$. Then,
 \[ K_\alpha^{h} = \left(f - (n-1)!P_h^\alpha(k)\right)\vol_g.\]
Choose $\lambda \in \RR$ such that $\displaystyle{\int_M (f - \lambda) \vol_g = 0}$. There then exists $k \in C^\infty(M)$ such that 
\[ \displaystyle{P_h^\alpha(k) = \frac{f-\lambda}{(n-1)!}}, \] 
up to a constant in $\CC$. It follows that the corresponding Hermitian metric $h = e^k h_0$ satisfies the $\alpha$-Hermitian-Einstein equation \eqref{eq:alphaHE-new}.
\end{proof}

As for holomorphic vector bundles on complex manifolds, we have a notion of holomorphic section for $I_\pm$-holomorphic vector bundles.
\begin{defn}\label{defn:Ipm-hol-sect}
A section $s$ of $(V, \bar\partial_+, \bar\partial_-)$ is called \emph{$I_\pm$-holomorphic} if $\bar\partial_\pm s = 0$. 
\end{defn}

\noindent
A key ingredient in the proof of Corollary \ref{coro:injectivityofbarK} is the following vanishing theorem for $I_\pm$-holomorphic sections of $V$, which is similar to Theorem 2.2.1 in \cite{LubkeTeleman1995}.

\begin{theorem}\label{thm:vanishing}
If $\lambda < 0$, then $(V, \bar\partial_+, \bar\partial_-)$ has no global $I_\pm$-holomorphic sections. 
In addition, if $\lambda = 0$ and $\alpha \in (0,1)$, then every global $I_\pm$-holomorphic section  of $(V, \bar\partial_+, \bar\partial_-)$ is parallel with respect to both $\nabla^C_\pm$.
\end{theorem}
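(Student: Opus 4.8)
The plan is to adapt the classical Bochner-type vanishing argument (Theorem 2.2.1 of \cite{LubkeTeleman1995}) by running it simultaneously for the two Chern connections $\nabla^C_+$ and $\nabla^C_-$ and then taking the $\alpha$-weighted combination. The crucial structural feature is that an $I_\pm$-holomorphic section $s$ satisfies \emph{both} $\bar\partial_+ s = 0$ and $\bar\partial_- s = 0$, so the vanishing computation can be carried out independently with respect to each complex structure before the two are coupled through the $\alpha$-Hermitian-Einstein equation \eqref{eq:alphaHE-new}.

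First I would establish, for each sign $\bullet = \pm$, the pointwise Weitzenböck identity
\[ P_{\bullet,h}(|s|^2) = h(K_\bullet s, s) - |\partial_\bullet s|^2. \]
This follows from $\bar\partial_\bullet s = 0$ together with the reduction $\bar\partial_\bullet \partial_\bullet s = F^h_\bullet s$ (valid because the $(0,2)$-part of $F^h_\bullet$ vanishes and $\partial_\bullet\bar\partial_\bullet s = 0$): differentiating $|s|^2 = h(s,s)$ gives $\partial_\bullet|s|^2 = h(\partial_\bullet s, s)$, and applying $\bar\partial_\bullet$ yields $\bar\partial_\bullet\partial_\bullet|s|^2 = h(F^h_\bullet s, s) - h(\partial_\bullet s,\partial_\bullet s)$; contracting with $\sqrt{-1}\Lambda_\bullet$ produces the stated formula, where $|\partial_\bullet s|^2 \geq 0$ is the pointwise norm induced by $h$ and $g$. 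Taking the $\alpha$-combination \eqref{eq:alpha-mean-curv} and substituting $K_\alpha = \lambda\,\id_V$ gives
\[ P^\alpha_h(|s|^2) = \lambda\,|s|^2 - \alpha\,|\partial_+ s|^2 - (1-\alpha)\,|\partial_- s|^2. \]

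Next I would integrate this identity over $M$ against $\vol_g$. By Lemma \ref{lemma:PalphaonCinfty}(2) the image of $P^\alpha_h$ consists of mean-zero functions, so the left-hand side integrates to $0$ (this is exactly where the Gauduchon hypothesis of Assumption \ref{assump:Gauduchon} enters). Hence
\[ \lambda \int_M |s|^2\,\vol_g = \int_M \left( \alpha\,|\partial_+ s|^2 + (1-\alpha)\,|\partial_- s|^2 \right) \vol_g \geq 0. \]
If $\lambda < 0$ the left-hand side is $\leq 0$, forcing $\int_M |s|^2\,\vol_g = 0$ and hence $s \equiv 0$, so there are no nonzero global $I_\pm$-holomorphic sections. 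If $\lambda = 0$ the right-hand side vanishes; since $\alpha \in (0,1)$ makes both weights strictly positive and both integrands are nonnegative, we conclude $\partial_+ s = 0$ and $\partial_- s = 0$ separately. Combined with $\bar\partial_\pm s = 0$ this gives $\nabla^C_\pm s = 0$, i.e. $s$ is parallel with respect to both Chern connections.

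I expect the only delicate point to be the bookkeeping in the Weitzenböck identity — pinning down the signs in $\sqrt{-1}\Lambda_\pm h(\partial_\pm s, \partial_\pm s) = |\partial_\pm s|^2$ and the reduction $\bar\partial_\pm\partial_\pm s = F^h_\pm s$ — together with the verification that $P^\alpha_h$ lands in the mean-zero functions, which I would simply quote from Lemma \ref{lemma:PalphaonCinfty}. It is worth emphasizing that the strict inclusion $\alpha \in (0,1)$ is essential in the $\lambda = 0$ case: it is precisely the positivity of \emph{both} weights that decouples the two integrals and forces $s$ to be parallel for each Chern connection rather than merely for the weighted average.
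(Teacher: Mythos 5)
Your proposal is correct and follows essentially the same route as the paper: both rest on the identity $P^\alpha_h(|s|^2) = \lambda|s|^2 - (\alpha|\partial_+ s|^2 + (1-\alpha)|\partial_- s|^2)$ for a global $I_\pm$-holomorphic section $s$, and then force the curvature/gradient terms to vanish, using positivity of both weights when $\lambda = 0$. The only cosmetic difference is that the paper extracts the conclusion pointwise via the maximum principle ($P^\alpha_h(|s|^2) = 0$), whereas you integrate over $M$ and invoke Lemma \ref{lemma:PalphaonCinfty}(2); these are interchangeable finishes of the same Bochner-type argument.
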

\begin{proof}
It can be shown that, for a global $I_\pm$-holomorphic section  $s$,
$$P^\alpha_h(h(s, s)) = \lambda |s|^2 - (\alpha|\partial_+s|^2 + (1-\alpha)|\partial_-s|^2).$$
For $\lambda < 0$, the maximum principle implies that $P^\alpha(h(s,s)) = 0$. Consequently, $|s|^2=0$ and $s = 0$. 
For $\lambda = 0$ and $\alpha \in (0,1)$, we see that $\partial_\pm s = 0$, which implies $\nabla^C_{\pm} s = 0$.
\end{proof}

Referring to Definition 1.1.16 and Proposition $1.1.17$ in \cite{LubkeTeleman1995}, we define the following analogue of an irreducible connection in the bi-Hermitian setting.
\begin{defn}\label{defn:irreduciblepairs}
 An pair $(\nabla_+, \nabla_-)$ of $h$-unitary connections on $V$ is called \emph{irreducible} if $V$ cannot be written as an $h$-orthogonal and $(\nabla_+, \nabla_-)$-parallel direct sum of nontrivial subbundles.
\end{defn}

\noindent
Using the vanishing theorem \ref{thm:vanishing}, the following is proved exactly like Theorem $2.3.2$ in \cite{LubkeTeleman1995}:
\begin{theorem}\label{thm:HEtostable}
 Let $(\nabla_+, \nabla_-)$ be an irreducible $\alpha$-Hermitian-Einstein pair of $h$-unitary connections on $V$. 
 Let $\bar\partial_\pm$ be the $(0,1)$-part of $\nabla_\pm$ with respect to the complex structure $I_\pm$.
 Then, $(\bar\partial_+, \bar\partial_-)$ is an $\alpha$-stable $I_\pm$-holomorphic structure  on $V$.
\end{theorem}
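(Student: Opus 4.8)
The plan is to mirror the classical argument (Theorem 2.3.2 of \cite{LubkeTeleman1995}) that an irreducible Hermitian-Einstein connection yields a stable holomorphic structure, adapting each step to the bi-Hermitian setting by using the $\alpha$-weighted quantities throughout and invoking the vanishing theorem \ref{thm:vanishing} at the crucial point. First I would argue by contradiction: suppose $(\bar\partial_+,\bar\partial_-)$ is not $\alpha$-stable, so there is a proper coherent subsheaf $\FFC=(\FFC_+,\FFC_-)$ of $(V,\bar\partial_+,\bar\partial_-)$ with $\mu_\alpha(\FFC)\geq\mu_\alpha(V)$. The goal is to produce from $\FFC$ a nontrivial $h$-orthogonal, $(\nabla_+,\nabla_-)$-parallel splitting of $V$, contradicting irreducibility.

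The key device, exactly as in \cite{LubkeTeleman1995}, is to represent the subsheaf by a weakly holomorphic projection. Away from the analytic set $S=S_+\cup S_-$ (of codimension $\geq 2$) the subsheaves $\FFC_\pm$ are subbundles agreeing on $M\setminus S$, so there is a single $L^2_1$ section $\pi$ of $\End(V)$, defined on $M\setminus S$ and extending across $S$ in the Sobolev sense, satisfying $\pi=\pi^*=\pi^2$ and, because $\FFC_\pm$ are $\bar\partial_\pm$-invariant, the two conditions
\[ (\id_V-\pi)\circ\bar\partial_\pm\circ\pi=0 \quad\text{for }\bullet=+,-.\]
The Chern-Weil type identity relating the $\alpha$-degree of $\FFC$ to $\pi$ then reads, after taking $\alpha$-weighted traces of the curvature terms,
\[ \deg_\alpha(\FFC)=\int_M\left(\langle K_\alpha,\pi\rangle - \alpha|\bar\partial_+\pi|^2-(1-\alpha)|\bar\partial_-\pi|^2\right)\vol_g.\]
This is the $\alpha$-combination of the two classical degree formulas applied to $(V,\bar\partial_+)$ and $(V,\bar\partial_-)$ with their respective Gauduchon forms $\omega_\pm$; Assumption \ref{assump:Gauduchon} is what guarantees both degree integrals make sense and that $\deg_\alpha(\FFC)=\alpha\deg_+(\FFC_+)+(1-\alpha)\deg_-(\FFC_-)$ is recovered.

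Next I would substitute the $\alpha$-Hermitian-Einstein condition $K_\alpha=\lambda\id_V$ into this identity. Since $\tr\pi$ is an integer-valued rank function, $\int_M\langle\lambda\id_V,\pi\rangle\vol_g=\lambda\,\rk(\FFC)\Vol(M)$, and the assumption $\mu_\alpha(\FFC)\geq\mu_\alpha(V)$ forces the slope inequality to combine with the manifestly nonnegative term $\alpha|\bar\partial_+\pi|^2+(1-\alpha)|\bar\partial_-\pi|^2$ in such a way that the only consistent possibility is $\bar\partial_+\pi=\bar\partial_-\pi=0$ and $\mu_\alpha(\FFC)=\mu_\alpha(V)$. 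Because $\alpha\in(0,1)$, both coefficients are strictly positive, so both $\bar\partial_\pm\pi=0$ individually; thus $\pi$ is a genuine smooth section of $\End(V)$ that is holomorphic with respect to both structures. A regularity/elliptic argument (as in \cite{LubkeTeleman1995}) upgrades the weakly holomorphic $\pi$ to a smooth projection, and $\bar\partial_\pm\pi=0$ together with $\pi^*=\pi$ shows $\img(\pi)$ is preserved by both Chern connections $\nabla^C_\pm$, yielding the forbidden $(\nabla_+,\nabla_-)$-parallel orthogonal decomposition $V=\img(\pi)\oplus\ker(\pi)$.

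The main obstacle I anticipate is the analytic regularity step: establishing that the weakly holomorphic $L^2_1$ projection $\pi$ extends across $S$ and is smooth, and justifying the integration-by-parts leading to the degree formula when $\pi$ is only Sobolev and $S$ may be nonempty. In the classical single-complex-structure case this is handled by the Uhlenbeck-Yau regularity theorem and by controlling the codimension of $S$; here the new subtlety is that $\FFC_+$ and $\FFC_-$ are coherent with respect to \emph{different} integrable complex structures $I_+$ and $I_-$, with possibly different singular sets $S_+,S_-$, so the extension must be done simultaneously and the two $\bar\partial_\pm$-regularity theories reconciled on $M\setminus S$. Once one knows $\pi$ is smooth and $\bar\partial_\pm\pi=0$ away from a codimension-$\geq2$ set, the vanishing theorem \ref{thm:vanishing} (applied in the $\lambda=0$ normalization to the endomorphism bundle, where $\alpha\in(0,1)$ forces parallelism) closes the argument; verifying that \ref{thm:vanishing} transfers verbatim to $\End(V)$ with its induced $\alpha$-Hermitian-Einstein structure is the final checkpoint.
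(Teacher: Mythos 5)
Your proposal is correct in outline, but it takes a genuinely different route from the paper. The paper's proof of Theorem \ref{thm:HEtostable} never introduces a projection operator: given a proper coherent subsheaf $\mathcal{F}$ of rank $r_{\mathcal{F}}$, it uses the inclusion $\det\mathcal{F}\to\wedge^{r_{\mathcal{F}}}V$ to produce a nonzero $I_\pm$-holomorphic section $s$ of $\wedge^{r_{\mathcal{F}}}V\otimes\det\mathcal{F}^*$; equipping the line bundle $\det\mathcal{F}$ with an $\alpha$-Hermitian-Einstein metric (Corollary \ref{line-bundle:stable}) makes this bundle $\alpha$-Hermitian-Einstein with constant $r_{\mathcal{F}}\lambda(V)-\lambda(\det\mathcal{F})$, and the vanishing theorem (Theorem \ref{thm:vanishing}) forces this constant to be nonnegative, i.e.\ $\mu_\alpha(\mathcal{F})\leq\mu_\alpha(V)$ via \eqref{eq:HEconstant}, with the equality case making $s$ parallel and yielding a splitting excluded by irreducibility. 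You instead run the Uhlenbeck-Yau style argument: represent $\mathcal{F}$ by an $L^2_1$ projection $\pi$, apply the $\alpha$-weighted Chern-Weil degree formula, and force $\bar\partial_\pm\pi=0$ in the destabilizing case. This is a legitimate alternative, and it has the virtue of being uniform with the paper's converse direction (\S\ref{subsect:coherentsheavesIpm}). But note what it costs: its key analytic input --- that an \emph{arbitrary} coherent subsheaf $(\mathcal{F}_+,\mathcal{F}_-)$ in the sense of Definition \ref{defn:coherentsheaves} is represented by a single Sobolev projection across $S=S_+\cup S_-$ for which the $\alpha$-degree formula holds --- is established nowhere in the paper; the paper only ever uses the opposite (Uhlenbeck-Yau) direction, projection $\Rightarrow$ subsheaf, in Proposition \ref{prop:destabilizing}, so you would have to import or prove this in the bi-Hermitian Gauduchon setting. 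By contrast, the paper's argument needs only Corollary \ref{line-bundle:stable} and Theorem \ref{thm:vanishing}, both already proven in \S\ref{subsect:HEtostable}, so it is the lighter proof. Two smaller remarks: your final appeal to Theorem \ref{thm:vanishing} on $\End(V)$ is valid (the induced constant is $0$) but redundant, since once $\pi$ is smooth, self-adjoint and satisfies $\bar\partial_\pm\pi=0$, the identity $\partial_\pm\pi=\left(\bar\partial_\pm\pi\right)^*=0$ already gives $\nabla_\pm\pi=0$; and your degree formula should carry the normalizing constants implicit in \eqref{eq:HEconstant} for the slope comparison to come out exactly.
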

\begin{proof}
 Assume that $h$ satisfies the $\alpha$-Hermitian-Einstein equation. Then, 
 \begin{equation}\label{eq:HEconstant}
  \lambda(V) = \frac{2\pi}{(n-1)! \Vol_g(M)} \mu_\alpha(V).
 \end{equation}
Suppose that $\mc F$ is a coherent subsheaf of $(V, \bar\partial_+, \bar\partial_-)$. Let $r_V = \rk V$ and $r_{\mc F} = \rk \mc F$. 
The inclusion $\mc F \hookrightarrow V$ then induces the injective map $\det \mc F \to \wedge^{r_{\mc F}} V$, which defines a section $s$ of $\wedge^{r_E} V \tensor \det  \mc F^*$. 
By construction, $s$ is an $I_\pm$-holomorphic section with respect to the induced $I_\pm$-holomorphic structure.
Suppose $(\nabla_+, \nabla_-)$ is an $\alpha$-Herimitian-Einstein pair. Then the same argument using the vanishing theorem as in Theorem 2.3.2  of \cite{LubkeTeleman1995} shows that
 $$r_{\mc F} \lambda(V) - \lambda(\det \mc F) \vargeq 0 \Rightarrow \mu_\alpha(\mc F) \varleq \mu_\alpha(V).$$
 Since $(\nabla_+, \nabla_-)$ is irreducible, we see that $(V, \bar\partial_+, \bar\partial_-)$ is $\alpha$-stable.
\end{proof}

In general, we have the following result, which essentially corresponds to Theorem 2.3.2 in \cite{LubkeTeleman1995}.
\begin{corollary}\label{coro:injectivityofbarK}
 If $(V, \bar\partial_+, \bar\partial_-)$ admits an $\alpha$-Hermitian-Einstein metric, then it is $\alpha$-polystable.
\end{corollary}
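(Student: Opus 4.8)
The plan is to run the standard induction on $\rk V$ that deduces polystability from the existence of a Hermitian--Einstein metric, now in the bi-Hermitian setting, using Theorem \ref{thm:HEtostable} as the stable base step and the reducibility alternative to peel off summands. First I would fix an $\alpha$-Hermitian-Einstein metric $h$ on $(V,\bar\partial_+,\bar\partial_-)$, so that the associated pair of Chern connections $(\nabla^C_+,\nabla^C_-)$ is an $\alpha$-Hermitian-Einstein pair with constant $\lambda$ in the sense of \eqref{eq:alphaHE-new}. The argument then splits according to whether this pair is irreducible in the sense of Definition \ref{defn:irreduciblepairs}.

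If $(\nabla^C_+,\nabla^C_-)$ is irreducible, then Theorem \ref{thm:HEtostable} immediately shows that $(\bar\partial_+,\bar\partial_-)$ is $\alpha$-stable, and an $\alpha$-stable bundle is trivially $\alpha$-polystable. If instead the pair is reducible, then by definition $V$ splits as an $h$-orthogonal and $(\nabla^C_+,\nabla^C_-)$-parallel direct sum $V=V_1\dsum V_2$ of nontrivial subbundles. Because each $V_i$ is parallel for both $\nabla^C_\pm$ and $h$-orthogonal, the restrictions $\nabla^C_\pm|_{V_i}$ are $h|_{V_i}$-unitary connections whose $(0,1)$-parts with respect to $I_\pm$ define $I_\pm$-holomorphic structures, making $(V_i,\bar\partial_+|_{V_i},\bar\partial_-|_{V_i})$ an $I_\pm$-holomorphic bundle whose Chern connections are precisely $\nabla^C_\pm|_{V_i}$. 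The key point is that the $\alpha$-mean curvature $K_\alpha$ is block-diagonal along this decomposition, so it restricts to $\lambda\,\id_{V_i}$ on each factor; hence each $(V_i,h|_{V_i})$ is again $\alpha$-Hermitian-Einstein with the same constant $\lambda$. By \eqref{eq:HEconstant} this forces $\mu_\alpha(V_i)=\mu_\alpha(V)$ for $i=1,2$.

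I would then close the argument by induction on rank. The base case is $\rk V=1$, where line bundles are $\alpha$-stable (property (1) following Definition \ref{defn:alphastable}) and hence $\alpha$-polystable. For the inductive step, each summand $V_i$ has strictly smaller rank, admits the $\alpha$-Hermitian-Einstein metric $h|_{V_i}$, and is therefore $\alpha$-polystable by the inductive hypothesis, that is, a direct sum of $\alpha$-stable bundles all of $\alpha$-slope $\mu_\alpha(V_i)=\mu_\alpha(V)$. Assembling the two decompositions exhibits $V$ as a direct sum of $\alpha$-stable bundles all sharing the common $\alpha$-slope $\mu_\alpha(V)$, which is exactly the definition of $\alpha$-polystability.

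The main obstacle is the verification in the reducible case that the parallel orthogonal decomposition is compatible with the entire package of structures: that the restricted connections really are the Chern connections of $h|_{V_i}$ for the restricted holomorphic structures $\bar\partial_\pm|_{V_i}$, and that $K_\alpha$ restricts block-diagonally so the summands inherit the same $\lambda$. In the classical single-complex-structure case this is Theorem 2.3.2 of \cite{LubkeTeleman1995}; here one must check it simultaneously for the two Chern connections $\nabla^C_+$ and $\nabla^C_-$, but this follows formally because Definition \ref{defn:irreduciblepairs} is tailored to require the subbundles to be parallel for both connections at once, so the same reasoning applies verbatim to each of $\nabla^C_\pm$.
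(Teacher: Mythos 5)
Your proof is correct and follows essentially the same route as the paper: the paper's own proof simply invokes Theorem \ref{thm:HEtostable} for the irreducible case together with the splitting-and-induction argument from the second half of Theorem 2.3.2 in \cite{LubkeTeleman1995}, which is exactly what you have written out (orthogonal $(\nabla^C_+,\nabla^C_-)$-parallel decomposition, restriction of the $\alpha$-Hermitian-Einstein condition with the same constant $\lambda$, equality of $\alpha$-slopes via \eqref{eq:HEconstant}, and induction on rank).
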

\begin{proof}
The proof follows from Theorem \ref{thm:HEtostable} and the arguments appearing in the proof of the second half of Theorem 2.3.2 in \cite{LubkeTeleman1995}.
\end{proof}

\subsection{$\alpha$-polystability implies $\alpha$-Hermitian-Einstein}\label{subsect:stabletoHE}
It is enough to show that if $(V,\bar\partial_+,\bar\partial_-)$ is $\alpha$-stable, then it admits an $\alpha$-Hermitian-Einstein metric.
Our proof of this statement follows closely the line of argument of Chapter 3 in \cite{LubkeTeleman1995}. 
We give precise references and provide explicit details of how the proof is adapted to the bi-Hermitian setting throughout.
We begin the section by outlining a strategy for the proof. 

Let us fix the $I_\pm$-holomorphic structures $\bar\partial_\pm$ and a Hermitian metric $h_0$ on $V$. 
For any positive-definite Hermitian endomorphism $f \in \Herm^+(V, h_0)$, let $h := fh_0$ be the Hermitian metric defined by
$$h(s, t) := h_0(fs, t)$$
 for $s, t \in C^\infty(V)$.
Let $\nabla^C_\pm = \bar\partial_\pm + \partial_{\pm, 0}$ be the Chern connections of $\bar\partial_\pm$ with respect to $h_0$. Their curvatures are then
$$F_\pm^{h_0} = \bar\partial_\pm(\partial_{\pm,0} h_0 \cdot h_0^{-1}),$$
where we also use $h_0$ to denote the matrix of the metric $h_0$ with respect to a local holomorphic frame. 
Consequently, the curvatures of the Chern connections of $\bar\partial_\pm$ with respect to $h$ are
$$F_\pm^h = F_\pm^{h_0} + \bar\partial_\pm(f^{-1}\partial_{\pm,0} (f))$$
and, referring to \eqref{eq:alpha-mean-curv}, the $\alpha$-mean curvature $K_\alpha^h$ with respect to $h$ is given by
$$K_\alpha^h = K_\alpha^{h_0} + \sqrt{-1}(\alpha\Lambda_+\bar\partial_+(f^{-1}\partial_{+,0} (f)) + (1-\alpha)\Lambda_-\bar\partial_-(f^{-1}\partial_{-,0} (f))).$$
Set
\[ K_\alpha^0 := K_\alpha^{h_0} - \lambda \id_V, \] 
where $\lambda := \lambda(V)$ is given by \eqref{eq:HEconstant}. 
The $\alpha$-Hermitian-Einstein equation \eqref{eq:alphaHE-new} for $h = f h_0$ can then be expressed in terms of $h_0$ and $f$ as
\begin{equation}\label{eq:alphacontinuity0}
 K_\alpha^0 + \sqrt{-1}(\alpha\Lambda_+\bar\partial_+(f^{-1}\partial_{+,0} (f)) + (1-\alpha)\Lambda_-\bar\partial_-(f^{-1}\partial_{-,0} (f))) = 0.
\end{equation}

As explained in \cite{LubkeTeleman1995}, section \S 3.1, the \emph{continuity method} consists in solving equation \eqref{eq:alphacontinuity0} by considering the perturbed equation
\begin{equation}\label{eq:alphacontinuity1}
 L^\alpha_\varepsilon(f):= K_\alpha^0 + \sqrt{-1}(\alpha\Lambda_+\bar\partial_+(f^{-1}\partial_{+,0} (f)) + (1-\alpha)\Lambda_-\bar\partial_-(f^{-1}\partial_{-,0} (f))) + \varepsilon \log(f) = 0,
\end{equation}
for $\varepsilon \in [0,1]$.
Note that equation \eqref{eq:alphacontinuity0} is equivalent to $L^\alpha_0(f) = 0$. The existence of an $\alpha$-Hermitian-Einstein metric $h$ on $V$ is therefore equivalent to the existence of a solution 
$f \in \Herm^+(V, h_0)$ of $L^\alpha_0(f) = 0$. 
The rest of this section deals with proving the existence of such a solution using the continuity method when $(V,\bar\partial_+,\bar\partial_-)$ is $\alpha$-stable.

\subsubsection{Summary of the proof and notation}
In this subsection, we provide a summary of the proof as well as some of the notation that will be used in the remainder of the paper.

Suppose that $(V, \bar\partial_+, \bar\partial_-)$ is $\alpha$-stable. Consider the set
\[ J = \left\{\varepsilon \in [0,1] : \mbox{ there exists $f_\varepsilon \in \Herm^+(V, h_0)$ such that $L^\alpha_\varepsilon(f_\varepsilon) = 0$}\right\}.\]
Then, $J$ is non-empty. Indeed, we have:

\begin{lemma}
$1 \in J$, implying that $J \neq \emptyset$.
\end{lemma}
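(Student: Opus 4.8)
The plan is to prove $1 \in J$ by running the continuity method a \emph{second} time, now deforming the inhomogeneous term $K_\alpha^0$ to zero rather than the parameter $\varepsilon$. The guiding principle is that the full zeroth-order term $\log f$ present at $\varepsilon = 1$ already makes the equation uniquely solvable, \emph{without} invoking $\alpha$-stability; stability enters only later, when one lets $\varepsilon \to 0$ and the coefficient of $\log f$ degenerates. Concretely, for $t \in [0,1]$ I would introduce the family
\[
 M_t(f) := t\, K_\alpha^0 + \sqrt{-1}\left(\alpha\Lambda_+\bar\partial_+(f^{-1}\partial_{+,0}f) + (1-\alpha)\Lambda_-\bar\partial_-(f^{-1}\partial_{-,0}f)\right) + \log f,
\]
so that $M_1 = L^\alpha_1$, and set $J' := \{t \in [0,1] : M_t(f) = 0 \text{ for some } f \in \Herm^+(V,h_0)\}$. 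The first observation is that $0 \in J'$: taking $f = \id_V$ kills the second-order term (since $\partial_{\pm,0}\id_V = 0$) and $\log\id_V = 0$, so $M_0(\id_V) = 0$. It then suffices to show that $J'$ is both open and closed in $[0,1]$; connectedness forces $J' = [0,1]$, and in particular $1 \in J'$, i.e.\ $L^\alpha_1(f) = 0$ admits a solution, which is exactly $1 \in J$.

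For openness I would linearize $M_t$ at a solution $f$ in a direction $\phi \in \Herm(V,h_0)$. The second-order part has the same leading symbol as the elliptic, self-adjoint operator $P^\alpha_h$ of Lemma \ref{lemma:PalphaonCinfty} (at $f=\id_V$ it is exactly $P^\alpha_h$ applied to $\phi$, since $\partial_{\pm,0}\id_V=0$), while the term $\log f$ linearizes to the positive-definite, self-adjoint zeroth-order operator $D\log(f)$ on $\Herm(V)$. Hence the linearized operator is elliptic, self-adjoint, of index zero, and positive definite, so it has trivial kernel (the positive zeroth-order term rules out nonzero solutions) and is therefore an isomorphism between the appropriate Hölder spaces. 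The implicit function theorem then gives openness of $J'$.

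For closedness I would establish a priori estimates uniform in $t \in [0,1]$. The crucial step is a $C^0$ bound on $\log f$: evaluating $M_t(f) = 0$ at a point where $\tr f + \tr(f^{-1})$ (equivalently $|\log f|$) is maximal and exploiting the sign of the second-order term supplied by the maximum principle, one bounds $\sup_M |\log f|$ purely in terms of $\sup_M |K_\alpha^0|$, uniformly in $t$. This keeps the eigenvalues of $f$ bounded away from $0$ and $\infty$, so limits remain in $\Herm^+(V,h_0)$; elliptic bootstrapping then yields bounds in all higher norms, and any sequence $t_i \to t$ with solutions $f_{t_i}$ subconverges to a solution $f_t$, proving $J'$ closed. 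The main obstacle is precisely this $C^0$ estimate, but — in sharp contrast to the analogous estimate needed as $\varepsilon \to 0$ — here the fixed, nondegenerate coefficient of $\log f$ makes it elementary and entirely independent of stability, which is why $1 \in J$ is the \emph{easy} end of the argument.
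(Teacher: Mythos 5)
Your proposal is correct in outline, but it takes a genuinely different --- and much heavier --- route than the paper. The paper's proof (deferring to Lemma 3.2.1 of \cite{LubkeTeleman1995}) involves no continuity method at all: the point is that the background metric $h_0$ is not given in advance but \emph{constructed}. Starting from an arbitrary metric $h$, one first conformally rescales it so that $\tr K_\alpha^0(h) = 0$ (a scalar equation solvable by Lemma \ref{lemma:PalphaonCinfty}; this is where the Gauduchon hypothesis enters), and then simply sets
\[
f_1 := \exp\bigl(-K_\alpha^0(h)\bigr), \qquad h_0 := f_1^{-1}h ,
\]
so that $h = f_1 h_0$. Since $K_\alpha^{f_1 h_0} = K_\alpha^{h}$, one gets $L^\alpha_1(f_1) = K_\alpha^0(h) + \log f_1 = 0$ identically: the solution at $\varepsilon = 1$ is produced algebraically, and the construction simultaneously arranges $\tr K_\alpha^0 = 0$ for the background, a normalization the paper quietly uses later (it forces $\det f_\varepsilon \equiv 1$ along the path, whence $\tr\eta_\varepsilon = 0$ in the proof of Lemma \ref{lemma:boundetasquare}). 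Your argument does not deliver this normalization, though it does prove the literal statement, and for \emph{every} fixed $h_0$ rather than a specially chosen one.

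Two caveats on your route. First, the phrase ``elliptic bootstrapping then yields bounds in all higher norms'' hides the only genuinely hard step: the equation is quasilinear with quadratic gradient terms, so a uniform $C^0$ bound on $\log f_t$ does not bootstrap directly; one must bound the path derivative $\dot f_t$ and integrate in $t$, i.e.\ redo the analogue of Proposition \ref{prop:fLp2boundsbyf1}. This does go through here \emph{without} simpleness, because at $\varepsilon = 1$ the term $2\varepsilon|\eta|^2$ in Proposition \ref{prop:linearizedestimate} is non-degenerate and the maximum principle bounds $\max_M|\eta_t|$ by $m_K$ --- in contrast to the $\varepsilon$-family, where that term degenerates as $\varepsilon \to 0$ and Lemma \ref{lemma:boundetasquare} (hence simpleness) is needed. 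So your claim of independence from stability is right, but it requires this argument, not generic bootstrapping. Second, the linearized operator at a solution is not literally self-adjoint or positive definite; the correct statement is that it has the same symbol as $P^\alpha$ (hence is elliptic of index $0$, Lemma \ref{lemma:linearization}) and is injective by the conjugation trick of Proposition \ref{prop:linearizedestimate}. In sum: your secondary continuity method works and proves a slightly stronger assertion, at the cost of re-running essentially all of the openness/closedness machinery; the paper's choice-of-$h_0$ trick is one line and also supplies the trace normalization needed downstream.
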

\begin{proof}
As in the proof of Lemma 3.2.1 in \cite{LubkeTeleman1995}, there exists a Hermitian metric $h_0$ on $V$ such that $\tr K_\alpha^0 = 0$ and $f_1 \in \Herm^+(V, h_0)$ with $L^\alpha_1(f_1) = 0$,
implying that $1 \in J$.
\end{proof}

\noindent
In fact, $J$ is both an open  and a closed subset of $(0,1]$ (see subsections \S \ref{subsect:openness} and \S \ref{subsect:closedness}, respectively).
Since $(0,1]$ is connected, this implies that $J = (0,1]$. There are now two possibilities:
\begin{itemize}
\item 
If $\lim_{\varepsilon \rightarrow 0} f_\epsilon = f_0$ exists for some $f_0 \in \Herm^+(V, h_0)$ such that $L^\alpha_0(f_0) = 0$, then
\[ h(s, t) := h_0(fs, t), \]
$s,t \in C^\infty(V)$, is an $\alpha$-Hermitian-Einstein metric on $(V,\bar\partial_+,\bar\partial_-)$ (see Corollary \ref{coro:L2boundgiveslimit}).

\item
If $\lim_{\varepsilon \rightarrow 0} f_\epsilon = f_0$ does not exist for some $f_0 \in \Herm^+(V, h_0)$ such that $L^\alpha_0(f_0) = 0$, then 
\begin{equation}\label{eq:limitDNE} 
\limsup\limits_{\varepsilon\rightarrow 0} \norm{\log f_\varepsilon}_{L^2} = \infty
\end{equation}
and $(V, \bar\partial_+, \bar\partial_-)$ is not $\alpha$-stable (by Corollary \ref{coro:L2boundgiveslimit} and Proposition \ref{prop:destabilizing}, respectively).
To be precise, the proof of Proposition \ref{prop:destabilizing} consists in showing that if \eqref{eq:limitDNE} holds,
then $(V, \bar\partial_+, \bar\partial_-)$ admits a {\em destablizing subsheaf} (see subsection \S \ref{subsect:coherentsheavesIpm} for details).
\end{itemize}

\noindent
Consequently, since we assumed $(V, \bar\partial_+, \bar\partial_-)$ to be $\alpha$-stable, it must admit an $\alpha$-Hermitian-Einstein metric, concluding the proof.

\begin{remark*}
We note that the proof outlined above does not require the $\alpha$-stability of $(V, \bar\partial_+, \bar\partial_-)$, but rather just the fact that $(V, \bar\partial_+, \bar\partial_-)$ is simple 
(which follows from the $\alpha$-stability by Lemma \ref{lemma:stabilityimpliessimplicity}), to prove that $J$ is a closed subset of $(0,1]$.
\end{remark*}

\noindent
{\bf Notation.} In the remainder of the paper, we drop the superscript $\alpha$ from the notation $L^\alpha_\varepsilon$. Moreover, we introduce the following short hand:
\begin{equation}\label{eq:shorthand}
\Lambda^\alpha \bar\partial(f^{-1}\partial f) := \alpha\Lambda_+\bar\partial_+(f^{-1}\partial_{+} (f)) + (1-\alpha)\Lambda_-\bar\partial_-(f^{-1}\partial_{-} (f))
\end{equation}
and use $\Lambda^\alpha \bar\partial(f^{-1} \partial_0 f)$ to denote the right-hand side of equation \eqref{eq:shorthand} with $\partial_\pm$ is replaced by $\partial_{\pm,0}$.
Furthermore, to simplify notation, we set
\[ P_\pm := P_{\pm, h_0} = \sqrt{-1} \Lambda_\pm\bar\partial_\pm \partial_{\pm,0}\]
and 
\[ P^\alpha:= P^\alpha_{h_0} = \alpha P_{+,h_0} + (1-\alpha)P_{-,h_0}.\]
Finally, for any $f \in \Herm^+(V, h_0)$, we let 
$$\Ad^{\pm\half}_f(\psi) := f^{\pm\half}\circ \psi \circ f^{\mp\half}$$
and define
$$d_\pm^f := \partial_{\pm, 0}^f + \bar\partial_\pm^f,$$
where
 \begin{equation}\label{eq:twisted-partial}
\mbox{ $\partial_{\pm,0}^f := \Ad^{-\half}_f \circ \partial_{\pm,0} \circ \Ad^\half_f$ and $\bar\partial_\pm^f := \Ad^\half_f \circ \bar\partial_\pm \circ \Ad^{-\half}$.}
 \end{equation}


\subsubsection{Openness of $J$}\label{subsect:openness}
We show that $J$ is an open subset of $(0,1]$ by using the Implicit Function Theorem. 
We follow the line of argument given in \cite{LubkeTeleman1995}, section \S 3.2.

Before proving that $J$ is an open subset of $(0,1]$  in Proposition \ref{prop:openness}, we need to establish a few technical results.
We begin with the following analogue of Lemma 3.2.3 in \cite{LubkeTeleman1995}:
\begin{lemma}\label{lemma:Lhat} 
 For any $f \in \Herm^+(V, h_0)$, we have $\hat L(\varepsilon, f) : = f\circ L_\varepsilon(f) \in \Herm(V, h_0)$.
\end{lemma}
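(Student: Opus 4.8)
The plan is to observe that $L_\varepsilon(f)$ is, up to a real multiple of the identity and a manifestly self-adjoint term, precisely the $\alpha$-mean curvature $K_\alpha^h$ of the metric $h=fh_0$, and then to exploit the fact that left multiplication by $f$ interchanges $h_0$-Hermiticity and $h$-Hermiticity.

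First I would rewrite $L_\varepsilon(f)$. Combining the curvature transformation formula for $h=fh_0$, which gives $K_\alpha^h = K_\alpha^{h_0} + \sqrt{-1}\,\Lambda^\alpha\bar\partial(f^{-1}\partial_0 f)$, with the definition $K_\alpha^0 = K_\alpha^{h_0} - \lambda\,\id_V$, equation \eqref{eq:alphacontinuity1} becomes
\[ L_\varepsilon(f) = K_\alpha^h - \lambda\,\id_V + \varepsilon\log f. \]
Next I would record the elementary adjunction identity relating the two metrics: if $A^*$ denotes the $h_0$-adjoint of $A\in\End(V)$ and $A^{*_h}$ its $h$-adjoint, then, since $f$ is $h_0$-Hermitian, one computes $A^{*_h} = f^{-1}A^* f$. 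Consequently $A$ is $h$-Hermitian if and only if $fA = (fA)^*$, that is, if and only if $fA\in\Herm(V,h_0)$.

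With this in hand the statement reduces to checking that $K_\alpha^h$ is $h$-Hermitian and that the remaining two summands behave well. The curvature $F_\pm^h$ of each Chern connection $\nabla^C_\pm$ for the metric $h$ is $h$-skew-Hermitian, so $\sqrt{-1}\,\Lambda_\pm F_\pm^h$ is $h$-Hermitian; since $\alpha,1-\alpha\in\RR$, the combination $K_\alpha^h = \alpha\sqrt{-1}\Lambda_+F_+^h + (1-\alpha)\sqrt{-1}\Lambda_-F_-^h$ is $h$-Hermitian, whence $fK_\alpha^h\in\Herm(V,h_0)$ by the adjunction identity. The term $-\lambda f$ is $h_0$-Hermitian because $\lambda\in\RR$ and $f$ is, and $f\log f$ is $h_0$-Hermitian because $f$ is positive definite $h_0$-Hermitian, so that $\log f$ is $h_0$-Hermitian and commutes with $f$. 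Adding these, $\hat L(\varepsilon,f) = fL_\varepsilon(f) = fK_\alpha^h - \lambda f + \varepsilon f\log f$ lies in $\Herm(V,h_0)$, as claimed. This is the exact analogue of Lemma 3.2.3 in \cite{LubkeTeleman1995}.

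The computation is essentially bookkeeping, so there is no serious obstacle; the one point demanding care is keeping the two adjoint structures ($h$ versus $h_0$) straight, which is precisely what the identity $A^{*_h}=f^{-1}A^* f$ isolates. I would therefore present that identity first, so that the Hermiticity of each of the three summands of $fL_\varepsilon(f)$ follows transparently.
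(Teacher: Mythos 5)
Your proof is correct, but it follows a genuinely different route from the paper's. The paper's proof is a reduction to the classical single-complex-structure case: it writes $L_\varepsilon = \alpha L^+_\varepsilon + (1-\alpha)L^-_\varepsilon$, where $L^\pm_\varepsilon(f) := K_\pm^0 + \sqrt{-1}\Lambda_\pm(\bar\partial_\pm(f^{-1}\circ\partial_{\pm,0}(f))) + \varepsilon\log f$ are exactly the operators treated in Lemma 3.2.3 of the L\"ubke--Teleman book, cites that lemma to get $f\circ L^\pm_\varepsilon(f)\in\Herm(V,h_0)$ for each sign separately, and concludes because a convex combination of $h_0$-Hermitian endomorphisms is $h_0$-Hermitian. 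You instead re-derive the content of the cited lemma from scratch: you identify $L_\varepsilon(f) = K^h_\alpha - \lambda\,\id_V + \varepsilon\log f$ for $h = fh_0$, isolate the adjunction identity $A^{*_h} = f^{-1}A^{*}f$ (which is precisely the mechanism hidden inside the L\"ubke--Teleman proof, and which correctly converts $h$-Hermiticity of $A$ into $h_0$-Hermiticity of $fA$), and then verify Hermiticity of each of the three summands $fK^h_\alpha$, $-\lambda f$, $\varepsilon f\log f$. Both arguments ultimately rest on the same convexity observation --- the paper applies it to the operators $L^\pm_\varepsilon$, you apply it to the mean curvatures $K^h_\pm$. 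What the paper's reduction buys is brevity and a clean appeal to the literature; what your argument buys is self-containedness and a conceptual clarification, namely that $\hat L(\varepsilon,f)$ is $f$ times the $\alpha$-Hermitian-Einstein operator of the deformed metric $h$ plus a manifestly Hermitian perturbation, so the lemma is really the statement that left multiplication by $f$ intertwines $h$- and $h_0$-Hermiticity. Either proof would be acceptable in the paper.
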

\begin{proof}
We first note that  $K_\alpha^0$ can be written as $K_\alpha^0 = \alpha K_+^0 + (1-\alpha) K_-^0$,
where $K_\pm^0  := K^{h_0}_\pm - \lambda \id_V$. 
Consequently, $L_\varepsilon = \alpha L^+_\varepsilon + (1-\alpha) L^-_\varepsilon$ with
\[ L^\pm_\varepsilon(f) := K_\pm^0 + \sqrt{-1}\Lambda_\pm(\bar\partial_\pm(f^{-1}\circ \partial_{\pm, 0}(f))) + \varepsilon \log(f). \]
 It follows from \cite{LubkeTeleman1995}, Lemma 3.2.3, that $\hat L^\pm(\varepsilon, f) := f\circ L_\varepsilon^\pm(f) \in \Herm(V, h_0)$,
 implying that the convex combination $\hat L(\varepsilon, f) \in \Herm(V, h_0)$ as well.
\end{proof}

Similarly to Lemma 3.2.4 in \cite{LubkeTeleman1995}, we work in the Sobolev space $L^p_k\Herm(V, h_0)$ and have
\begin{lemma}\label{lemma:linearization} 
 The linearization 
 $$d_2\hat L(\varepsilon, f) : L^p_k\Herm(V, h_0) \to L^p_{k-2}\Herm(V, h_0)$$
 of $\hat L(\varepsilon, f)$ at $f$ is a second order elliptic operator of index $0$. Therefore, $d_2\hat L(\varepsilon, f)$ is an isomorphism if and only if it is injective if and only if it is surjective.
\end{lemma}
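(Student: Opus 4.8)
The plan is to reduce the computation of the principal symbol of $d_2\hat L(\varepsilon,f)$ to the two single-structure pieces already understood, and then to read off ellipticity and the index from the symbol alone. Writing $\hat L = \alpha\hat L^+ + (1-\alpha)\hat L^-$, where $\hat L^\pm(\varepsilon,f) := f\circ L^\pm_\varepsilon(f)$ as in the proof of Lemma \ref{lemma:Lhat}, the fact that $d_2$ is the differential in the second variable and the coefficients $\alpha, 1-\alpha$ are constant gives
\[ d_2\hat L(\varepsilon,f) = \alpha\, d_2\hat L^+(\varepsilon,f) + (1-\alpha)\, d_2\hat L^-(\varepsilon,f). \]
Each summand $\hat L^\pm$ is exactly the object treated in Lemma 3.2.4 of \cite{LubkeTeleman1995} for the single complex structure $I_\pm$, so I would invoke that lemma to conclude that each $d_2\hat L^\pm(\varepsilon,f)$ is a second order operator whose top-order part, obtained by linearizing $f\circ\sqrt{-1}\Lambda_\pm\bar\partial_\pm(f^{-1}\partial_{\pm,0}f)$ in $f$, is $\sqrt{-1}\Lambda_\pm\bar\partial_\pm\partial_{\pm,0}(\cdot)$; the contributions of the term $\varepsilon\log f$ and of the product-rule corrections to $f^{-1}\partial_{\pm,0}f$ are of order at most one and do not affect the principal symbol.

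It then remains to assemble the symbols. Since the outer factor $f$ and the inner $f^{-1}$ cancel at the level of the (scalar) leading symbol, each $d_2\hat L^\pm$ has the same principal symbol as $P_\pm$, and hence
\[ \sigma\big(d_2\hat L(\varepsilon,f)\big)(x,u)(\phi) = \Big(\alpha\,\big|u^{1,0}_+\big|^2 + (1-\alpha)\,\big|u^{1,0}_-\big|^2\Big)\,\phi, \]
which is precisely the symbol of $P^\alpha_h$ computed above, with $u^{1,0}_\pm$ the $(1,0)$-part of $u$ under $I_\pm$. Because $I_\pm$ are orthogonal complex structures, $\big|u^{1,0}_\pm\big|^2 = \tfrac12|u|^2>0$ for every $u\neq 0$, so this symbol is a strictly positive multiple of the identity; thus $d_2\hat L(\varepsilon,f)$ is elliptic. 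The symbol is moreover self-adjoint, so exactly as for $P^\alpha_h$ the operator has index $0$: the formal adjoint has the same (self-adjoint) symbol, hence the same index, while in general its index is the negative, forcing $\ind\big(d_2\hat L(\varepsilon,f)\big)=0$.

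Finally, on the compact manifold $M$ an elliptic operator extends to a Fredholm map $L^p_k\Herm(V,h_0)\to L^p_{k-2}\Herm(V,h_0)$, and a Fredholm operator of index $0$ is injective if and only if it is surjective if and only if it is an isomorphism, which gives the stated equivalence. I expect the only delicate point to be the bookkeeping that confirms the lower-order terms, notably those from $\varepsilon\log f$, genuinely drop out of the principal symbol; the essential structural input, namely positivity of the convex combination $\alpha\big|u^{1,0}_+\big|^2+(1-\alpha)\big|u^{1,0}_-\big|^2$, which is what secures ellipticity in the bi-Hermitian setting and relies on both $I_+$ and $I_-$ being complex structures, is already in hand from the earlier computation of $\sigma(P^\alpha_h)$.
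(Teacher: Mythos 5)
Your proposal is correct and follows essentially the same route as the paper: both arguments reduce to showing that $d_2\hat L(\varepsilon,f)$ has the same principal symbol as $P^\alpha$ (all terms involving $\varepsilon\log f$ and the product-rule corrections being of lower order), so that ellipticity and index $0$ are inherited from the symbol computation already carried out for $P^\alpha_h$, and the Fredholm alternative then gives the stated equivalences. The only cosmetic difference is that the paper computes the linearization $d_2\hat L(\varepsilon,f)(\phi)$ explicitly and identifies its second-order term $\sqrt{-1}\,f\circ\Lambda^\alpha\bar\partial(f^{-1}\partial_0(\phi))$ directly, whereas you reach the same symbol via the convex decomposition $\hat L=\alpha\hat L^++(1-\alpha)\hat L^-$ together with Lemma 3.2.4 of \cite{LubkeTeleman1995} applied to each single complex structure.
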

\begin{proof}
 For $\phi \in L^p_k\Herm(V, h_0)$, we compute
 \begin{equation}\label{eq:linearizationLhat}
  \begin{split}
   & d_2\hat L(\varepsilon, f)(\phi) = \left.\frac{d}{dt}\right|_{t = 0} \hat L(\varepsilon, f + t\phi) \\
   = & \; \phi\circ K_\alpha^0 + \sqrt{-1}\left(\phi\circ \Lambda^\alpha\bar\partial(f^{-1}\partial_0 f) - f\circ \Lambda^\alpha\bar\partial(f^{-1}\circ \phi \circ \partial_0(f)) + f\circ \Lambda^\alpha\bar\partial(f^{-1}\partial_0(\phi)) \right)\\
   & + \varepsilon \phi\circ \log f + \varepsilon f^{-1}\phi.
  \end{split}
 \end{equation}
 It is clear that $d_2\hat L(\varepsilon, f)$ is a second order differential operator, with the second order term
\[ \sqrt{-1} f\circ \Lambda^\alpha\bar\partial(f^{-1}\partial_0(\phi)) = P^\alpha(\phi) + \mbox{(l.o.t in $\phi$)}. \]
 Thus, $d_2\hat L(\varepsilon, f)$ has the same symbol as $P^\alpha$, for which the statement holds.
\end{proof}

We also have the following counterpart of Proposition 3.2.5 in \cite{LubkeTeleman1995}:
\begin{prop}\label{prop:linearizedestimate} 
 Let $\varepsilon \in (0,1], \sigma \in \RR, f \in L^p_k\Herm^+(V, h_0)$ and $\phi \in \Herm(V, h_0)$. If
 \begin{equation}\label{eq:Lhatandlinearized}
  \hat L(\varepsilon, f) = 0 \text{ and } d_2\hat L(\varepsilon, f)(\phi) + \sigma f\circ \log f = 0,
 \end{equation}
 then, for $\eta := f^{-\half} \phi f^{-\half}$, we have
 $$P^\alpha(|\eta|^2) + 2 \varepsilon|\eta|^2 + \alpha|d_+^f \eta|^2 + (1-\alpha)|d_-^f \eta|^2 \varleq -2\sigma h_0(\log f, \eta).$$
\end{prop}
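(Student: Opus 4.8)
The plan is to read the whole statement as an $\alpha$-weighted combination of two copies of the computation underlying Proposition~3.2.5 of \cite{LubkeTeleman1995}, one carried out with respect to $I_+$ and one with respect to $I_-$. Indeed $L_\varepsilon$, $K^0_\alpha$, the shorthand \eqref{eq:shorthand}, the operator $P^\alpha$, and the twisted operators $\partial^f_{\pm,0},\bar\partial^f_\pm$ of \eqref{eq:twisted-partial} all split as $\alpha(\,\cdot\,)_+ + (1-\alpha)(\,\cdot\,)_-$, while $\Lambda_\pm$ acts on form indices and $f^{\pm\half}$ acts on the bundle $V$, so the two commute. The plan is therefore to establish the relevant pointwise identities separately for each sign and add them with weights $\alpha$ and $1-\alpha$.

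First I would use the first equation in \eqref{eq:Lhatandlinearized}. Since $f$ is invertible, $\hat L(\varepsilon,f)=0$ is equivalent to $L_\varepsilon(f)=0$, hence $\phi\circ L_\varepsilon(f)=0$. In the linearization \eqref{eq:linearizationLhat} the terms $\phi\circ K^0_\alpha$, $\sqrt{-1}\,\phi\circ\Lambda^\alpha\bar\partial(f^{-1}\partial_0 f)$ and $\varepsilon\,\phi\circ\log f$ are precisely $\phi\circ L_\varepsilon(f)$ and therefore drop out. What remains is the genuinely second-order part together with the contribution of the logarithmic term $\varepsilon f\circ\log f$; writing $\ell_f(\phi):=\int_0^\infty (f+s)^{-1}\phi\,(f+s)^{-1}\,ds$ for the Fr\'echet derivative of the matrix logarithm at $f$, the second equation in \eqref{eq:Lhatandlinearized} becomes, after conjugating by $f^{-\half}$ and substituting $\phi=f^{\half}\eta f^{\half}$,
\[ \sqrt{-1}\bigl(\alpha\Lambda_+\bar\partial^f_+\partial^f_{+,0}\eta+(1-\alpha)\Lambda_-\bar\partial^f_-\partial^f_{-,0}\eta\bigr)+\varepsilon\,f^{\half}\,\ell_f(\phi)\,f^{-\half}+\sigma\log f=0. \]
Here the identification of the second-order part with the twisted operators is the $\alpha$-weighted version of the corresponding step in \cite{LubkeTeleman1995}, using \eqref{eq:twisted-partial}, the fact that $\Lambda_\pm$ commutes with conjugation by $f^{\pm\half}$, and $f^{-\half}(f\circ\log f)f^{-\half}=\log f$.

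Next I would take the $h_0$-pairing of this equation with $\eta$; as $\eta\in\Herm(V,h_0)$ all the quantities below are real. For the leading term I would use, for each sign, the Bochner--Weitzenb\"ock identity for the $h_0$-unitary twisted connection $d^f_\pm=\partial^f_{\pm,0}+\bar\partial^f_\pm$ (the single-structure identity of \cite{LubkeTeleman1995}),
\[ P_\pm(|\eta|^2)=2\,h_0\bigl(\sqrt{-1}\Lambda_\pm\bar\partial^f_\pm\partial^f_{\pm,0}\eta,\ \eta\bigr)-|d^f_\pm\eta|^2, \]
and then form the $\alpha$-combination. The $\sigma$-term yields $2\sigma\,h_0(\log f,\eta)$, and after multiplying by $2$ the paired equation reads
\[ P^\alpha(|\eta|^2)+\alpha|d^f_+\eta|^2+(1-\alpha)|d^f_-\eta|^2+2\varepsilon\,h_0\bigl(f^{\half}\ell_f(\phi)f^{-\half},\eta\bigr)=-2\sigma\,h_0(\log f,\eta). \]

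Finally, the inequality stems entirely from the $\varepsilon$-term. The operative fact is the pointwise operator convexity of the logarithm, exactly as in the proof of Proposition~3.2.5 of \cite{LubkeTeleman1995}: for $f\in\Herm^+(V,h_0)$ and $\phi\in\Herm(V,h_0)$ one has
\[ h_0\bigl(f^{\half}\ell_f(\phi)f^{-\half},\ \eta\bigr)\vargeq |\eta|^2, \]
with equality if and only if $\phi$ commutes with $f$, in which case $\ell_f(\phi)=f^{-1}\phi$ by $\int_0^\infty(f+s)^{-2}\,ds=f^{-1}$ and the left side collapses to $|\eta|^2$. Since $\varepsilon>0$, inserting this lower bound into the displayed identity and discarding the non-negative excess gives
\[ P^\alpha(|\eta|^2)+2\varepsilon|\eta|^2+\alpha|d^f_+\eta|^2+(1-\alpha)|d^f_-\eta|^2\varleq -2\sigma\,h_0(\log f,\eta), \]
which is the claim. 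I expect the main obstacle to be the bookkeeping of the second and third steps: verifying that conjugation by $f^{\half}$ and the Weitzenb\"ock formula genuinely split along the two complex structures, so that the single-structure results of \cite{LubkeTeleman1995} may be summed with weights $\alpha$ and $1-\alpha$. The one genuinely analytic ingredient, the logarithmic inequality, is a pointwise statement about the single positive endomorphism $f$ and is insensitive to the bi-Hermitian structure.
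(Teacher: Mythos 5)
Your proposal is correct and follows essentially the same route as the paper's proof: both reduce the statement to the $\alpha$-weighted combination of the two single-structure computations from Proposition 3.2.5 of \cite{LubkeTeleman1995}, use $L_\varepsilon(f)=0$ to kill the zeroth-order terms in $d_2\hat L$, conjugate by $f^{\half}$ to arrive at $P^{\alpha,f}(\eta)+\varepsilon\Phi=-\sigma\log f$, and then combine the Weitzenb\"ock-type identity $P_\pm(|\eta|^2)=h_0(P^f_\pm\eta,\eta)+h_0(\eta,(P^f_\pm\eta)^*)-|d^f_\pm\eta|^2$ with the logarithm-derivative bound $h_0(\Phi,\eta)\vargeq|\eta|^2$. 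The only cosmetic difference is that the paper pairs the equation symmetrically (with $\eta$ and with the adjoint) where you pair once and invoke realness, and that you make $d_2(\log f)(\phi)$ explicit via the integral formula; the content is identical.
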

\begin{proof}
Consider the operators
$P_\pm^f := \sqrt{-1}\Lambda_\pm \bar\partial_\pm^f \partial_{\pm, 0}^f$ 
and 
$P^{\alpha, f} := \alpha P_+^f + (1-\alpha) P_-^f,$
where $\partial_{\pm, 0}^f$ and $\bar\partial_\pm^f$ are given by \eqref{eq:twisted-partial}.
 From \cite{LubkeTeleman1995}, page $68$, we have
 $$\sqrt{-1}d_2(\Lambda_\pm \bar\partial_\pm (f^{-1}\partial_{\pm,0} (f))) = \Ad_f^{-\half}(P_\pm^f(\eta)),$$ 
 which implies
 $$\sqrt{-1}d_2(\Lambda^\alpha \bar\partial (f^{-1}\partial_0 (f))) = \Ad_f^{-\half}(P^{\alpha,f}(\eta)).$$
 Since $\hat L(\varepsilon, f) = f\circ L_\varepsilon(f)$, the first equation in \eqref{eq:Lhatandlinearized} implies that
 $$d_2\hat L(\varepsilon, f)(\phi) = f\circ(\sqrt{-1}d_2(\Lambda^\alpha\bar\partial(f^{-1}\partial_0 (f)))(\phi) + \varepsilon d_2(\log f)(\phi)).$$
 Since $\Ad_f$ acts trivially on $\log f$, the second equation in \eqref{eq:Lhatandlinearized} implies that
 $$P^{\alpha, f}(\eta) + \varepsilon \Phi = - \sigma \log f,$$ 
where $\Phi = \Ad_f^\half (d_2(\log f)(\phi))$.
 Because $\log f$ and $\eta$ are both Hermitian, we get
 $$h_0(P^{\alpha, f}(\eta), \eta) + h_0(\eta, P^{\alpha, f}(\eta)^*) + \varepsilon h_0(\Phi, \eta) + \varepsilon h_0(\eta, \Phi^*) = - 2\sigma h_0(\log f, \eta).$$
 Since $d_\pm^f$ are $h_0$-unitary, as in \cite{LubkeTeleman1995}, page $68$, we get
 $$P_\pm(|\eta|^2) = h_0(P_\pm^f(\eta), \eta) + h_0(\eta, P_\pm^f(\eta)^*) - |d_\pm^f\eta|^2$$ 
 and 
 $$h_0(\Phi, \eta) \vargeq |\eta|^2, h_0(\eta, \Phi^*) \vargeq |\eta|^2.$$
 This implies that
 $$P^\alpha(|\eta|^2) = h_0(P^{\alpha,f}(\eta), \eta) + h_0(\eta, P^{\alpha,f}(\eta)^*) -\alpha|d_+^f\eta|^2 - (1-\alpha)|d_-^f\eta|^2$$
 and the proposition follows.
\end{proof}

We are now ready to prove that $J$ is a nonempty open subset of $(0, 1]$. This is done in the next proposition,
which follows from arguments similar to those used to prove Corollary 3.2.7 in \cite{LubkeTeleman1995}:
\begin{prop}\label{prop:openness} 
 For any $\varepsilon_0 \in (0,1]$ and $f_0 \in \Herm^+(V, h_0)$ such that $\hat L(\varepsilon_0, f_0) = 0$, there exists $\delta > 0$ and a unique differentiable map
 \[\begin{array}{rcl}
 f : I = (0,1] \inter (\varepsilon_0 - \delta, \varepsilon_0 + \delta) & \longrightarrow & \Herm^+(V, h_0) \\ 
 \varepsilon & \longmapsto & f_\varepsilon,
\end{array}\]
 such that $f_{\varepsilon_0} = f_0$ and $L_\varepsilon(f_\varepsilon) = 0$ for all $\varepsilon \in I$. Thus,
 $J$ is a nonempty open subset of $(0, 1]$.
\end{prop}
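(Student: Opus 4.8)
The plan is to apply the Implicit Function Theorem to $\hat L$ in a Sobolev setting, exactly as in Corollary 3.2.7 of \cite{LubkeTeleman1995}. Regarding $\hat L$ as a smooth map $\hat L(\varepsilon, f) \colon (0,1] \times L^p_k\Herm^+(V, h_0) \to L^p_{k-2}\Herm(V, h_0)$, the whole proposition reduces to showing that the partial linearization $d_2\hat L(\varepsilon_0, f_0)$ is a linear isomorphism at any solution $(\varepsilon_0, f_0)$ with $\varepsilon_0 > 0$. Once this is in hand, the IFT produces a unique differentiable path $\varepsilon \mapsto f_\varepsilon$ of solutions of $\hat L(\varepsilon, f_\varepsilon) = 0$ near $\varepsilon_0$, and hence of $L_\varepsilon(f_\varepsilon) = 0$ since $f_\varepsilon$ is invertible. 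Because $\Herm^+(V, h_0)$ is open in $\Herm(V, h_0)$ and $f_0$ is positive-definite, $f_\varepsilon$ stays positive-definite for $\varepsilon$ close to $\varepsilon_0$; elliptic regularity (bootstrapping on $d_2\hat L$, which is elliptic by Lemma \ref{lemma:linearization}) upgrades each $f_\varepsilon$ to a smooth section. This exhibits $\varepsilon_0$ as an interior point of $J$ in $(0,1]$, so $J$ is open, while nonemptiness is already known from $1 \in J$.

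By Lemma \ref{lemma:linearization}, $d_2\hat L(\varepsilon_0, f_0)$ is a second-order elliptic operator of index $0$, hence an isomorphism as soon as it is injective, so the key step is injectivity. I would deduce this from Proposition \ref{prop:linearizedestimate} taken with $\sigma = 0$: if $\phi \in \Herm(V, h_0)$ satisfies $d_2\hat L(\varepsilon_0, f_0)(\phi) = 0$, then for $\eta := f_0^{-\half}\phi f_0^{-\half}$ the estimate gives
\[ P^\alpha(|\eta|^2) + 2\varepsilon_0|\eta|^2 + \alpha|d_+^{f_0}\eta|^2 + (1-\alpha)|d_-^{f_0}\eta|^2 \varleq 0.\]
Integrating over $M$ against $\vol_g$, the term $\int_M P^\alpha(|\eta|^2)\,\vol_g$ vanishes, since $P^\alpha$ maps into the space of functions of zero integral by Lemma \ref{lemma:PalphaonCinfty}(2) (this is exactly where the Gauduchon condition of Assumption \ref{assump:Gauduchon} is used). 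What survives is
\[ \int_M \left(2\varepsilon_0|\eta|^2 + \alpha|d_+^{f_0}\eta|^2 + (1-\alpha)|d_-^{f_0}\eta|^2\right)\vol_g \varleq 0,\]
an integral of pointwise non-negative quantities. As $\varepsilon_0 > 0$ and $\alpha \in (0,1)$, this forces $|\eta|^2 \equiv 0$, so $\eta = 0$ and therefore $\phi = f_0^{\half}\eta f_0^{\half} = 0$. Thus $d_2\hat L(\varepsilon_0, f_0)$ is injective, completing the isomorphism claim.

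The main obstacle, and the reason the statement is confined to $\varepsilon_0 \in (0,1]$ rather than to all of $[0,1]$, is precisely the presence of the strictly positive zeroth-order term $2\varepsilon_0|\eta|^2$: at $\varepsilon_0 = 0$ the same estimate would only yield $d_\pm^{f_0}\eta = 0$, so $\eta$ would merely be parallel and injectivity could genuinely fail. Everything else is a faithful transcription of the Kähler argument of \cite{LubkeTeleman1995}, the one structural change being that the single operator $\sqrt{-1}\Lambda\bar\partial\partial$ is replaced throughout by the convex combination $P^\alpha = \alpha P_+ + (1-\alpha)P_-$; its ellipticity (Lemma \ref{lemma:linearization}) and the vanishing of its integral (Lemma \ref{lemma:PalphaonCinfty}) are exactly the two inputs that make the Lübke–Teleman proof go through, so I expect no further difficulty beyond the bookkeeping of the $\pm$ terms.
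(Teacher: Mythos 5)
Your proposal is correct and takes essentially the same approach as the paper: reduce via the Implicit Function Theorem and the index-zero ellipticity of Lemma \ref{lemma:linearization} to injectivity of $d_2\hat L(\varepsilon_0,f_0)$, then obtain the pointwise inequality from Proposition \ref{prop:linearizedestimate} with $\sigma = 0$. The only (immaterial) difference is the last step: the paper drops the gradient terms, pairs $P^\alpha(|\eta|^2) + 2\varepsilon|\eta|^2 \varleq 0$ against $|\eta|^2$ and invokes positivity of $P^\alpha$, whereas you integrate the full inequality directly and kill $\int_M P^\alpha(|\eta|^2)\,\vol_g$ via Lemma \ref{lemma:PalphaonCinfty}(2); both yield $\eta = 0$, hence $\phi = 0$.
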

\begin{proof}
 By the Implicit Function Theorem for Banach spaces and bootstraping for $L_\varepsilon(f) = 0$, 
 we only have to show that $d_2\hat L(\varepsilon, f)$ is an isomorphism at any solution $(\varepsilon, f)$ of $\hat L(\varepsilon, f) =0$ in $(0,1] \times L^p_k\Herm^+(V, h_0)$. 
In fact, proving injectivity of $d_2\hat L(\varepsilon, f)$ suffices by Lemma \ref{lemma:linearization}.

 Suppose that $\phi \in L^p_k\Herm(V,h_0)$ is such that $d_2\hat L(\varepsilon, f)(\phi) = 0$. By Proposition \ref{prop:linearizedestimate}, we get
 $$P^\alpha(|\eta|^2) + 2 \varepsilon|\eta|^2 + \alpha|d_+^f \eta|^2 + (1-\alpha)|d_-^f \eta|^2 \varleq 0,$$
 which implies 
 $$P^\alpha(|\eta|^2) + 2 \varepsilon|\eta|^2 \varleq 0,$$
 where $\eta = f^{-\half} \phi f^{-\half}$. Since $P^\alpha$ is a positive operator, we have
 $$2\varepsilon \int |\eta|^4 d\vol_g \varleq \int P^\alpha(|\eta|^2)|\eta|^2 d\vol_g + 2\varepsilon\int |\eta|^4 d\vol_g\varleq 0.$$
 It follows that $|\eta| = 0$, which implies that $\phi = 0$.
\end{proof}


\subsubsection{Closedness of $J$}\label{subsect:closedness}
We show that $J$ is a closed subset of $(0, 1]$ when $(V, \bar\partial_+, \bar\partial_-)$ is {\em simple} (see definition  \ref{defn:simpleness}).
The closedness of $J$ is then a direct consequence of the following theorem.
\begin{theorem}\label{thm:Jclosed}
 Let $\varepsilon_0 \in (0, 1]$ and suppose there exists $f_\varepsilon \in \Herm^+(V, h_0)$ such that $L_\varepsilon(f_\varepsilon) = 0$
for any $\varepsilon > \varepsilon_0 > 0$ with $\varepsilon \in (0,1]$. 
 If $(V, \bar\partial_+, \bar\partial_-)$ is simple, then  there exists a solution $f_{\varepsilon_0}$ of the equation $L_{\varepsilon_0}(f_{\varepsilon_0}) = 0$.
\end{theorem}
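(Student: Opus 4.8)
The plan is to run the standard compactness argument of the continuity method (Chapter 3.3 of \cite{LubkeTeleman1995}), adapted to the $\alpha$-weighted, two-operator setting. Fix a sequence $\varepsilon_i \searrow \varepsilon_0$ with $\varepsilon_i \in (0,1]$ and $\varepsilon_i > \varepsilon_0$, and let $f_i := f_{\varepsilon_i} \in \Herm^+(V,h_0)$ be the given solutions of $L_{\varepsilon_i}(f_i) = 0$. It suffices to produce uniform a priori estimates strong enough to extract a subsequence converging in $C^\infty$ to some $f_{\varepsilon_0} \in \Herm^+(V,h_0)$; since $L_\varepsilon(f)$ depends continuously on $(\varepsilon,f)$, passing to the limit in $L_{\varepsilon_i}(f_i)=0$ then yields $L_{\varepsilon_0}(f_{\varepsilon_0})=0$, which is exactly the desired solution.

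The crux is a uniform $C^0$ bound $\sup_M |\log f_i| \varleq C$ independent of $i$. I would obtain this in two stages. First, a $C^0$--$L^2$ estimate: writing $s_i := \log f_i$, the equation $L_{\varepsilon_i}(f_i)=0$ together with the eigenvalue structure of the mean curvature yields an elliptic differential inequality for $|s_i|$ governed by the operator $P^\alpha$ of Lemma \ref{lemma:PalphaonCinfty}; since $P^\alpha$ is elliptic, self-adjoint, with kernel the constants, the maximum principle (equivalently, a Green's-function or Moser-iteration argument) gives $\sup_M|s_i| \varleq C_1 \norm{s_i}_{L^2} + C_2$ with geometric constants. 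Second, a uniform $L^2$ bound on $s_i$: pairing $L_{\varepsilon_i}(f_i)=0$ with $s_i$ in the $h_0$-inner product and integrating, Assumption \ref{assump:Gauduchon} kills the total-derivative contributions and the convexity of $\log$ gives the nonnegativity $\int_M \< \sqrt{-1}\, \Lambda^\alpha \bar\partial(f_i^{-1}\partial_0 f_i), s_i\>\, \vol_g \vargeq 0$, so that $\varepsilon_0 \norm{s_i}_{L^2}^2 \varleq \varepsilon_i \norm{s_i}_{L^2}^2 \varleq \norm{K_\alpha^0}_{L^2}\,\norm{s_i}_{L^2}$. Because $\varepsilon_0 > 0$, this bounds $\norm{s_i}_{L^2} \varleq \norm{K_\alpha^0}_{L^2}/\varepsilon_0$ uniformly, and hence $\sup_M|s_i|$ uniformly.

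Simplicity enters to exclude the remaining degenerate scenario, namely that the estimates above break down through a loss of positive-definiteness in the limit. I would argue by contradiction: if no subsequence of $f_i$ stays in a fixed compact subset of $\Herm^+(V,h_0)$, set $m_i := \norm{s_i}_{L^2} \to \infty$ and $u_i := s_i/m_i$, extract a weak $L^2_1$-limit $u_\infty$ with $\norm{u_\infty}_{L^2}=1$, and show, using the integrated identity after dividing by $m_i$, that $u_\infty$ is a nonzero, trace-free endomorphism of $V$ that is holomorphic with respect to both $\bar\partial_+$ and $\bar\partial_-$. This contradicts Definition \ref{defn:simpleness}, since a simple $I_\pm$-holomorphic bundle admits no nonscalar such endomorphism. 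Thus the normalized blow-up cannot occur, and the $f_i$ remain in a fixed compact set of $\Herm^+(V,h_0)$.

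Finally, with the uniform $C^0$ bound in hand (which in particular keeps $f_i$ bounded away from the boundary of $\Herm^+(V,h_0)$), I would bootstrap: the linearization is a second-order elliptic operator of index $0$ with the symbol of $P^\alpha$ (Lemma \ref{lemma:linearization}), so the $L^p_k$ elliptic estimates applied to $L_{\varepsilon_i}(f_i)=0$ upgrade the $C^0$ bound to uniform $L^p_k$ bounds for all $k$, hence uniform $C^\infty$ bounds. Arzel\`a--Ascoli then gives a subsequence $f_i \to f_{\varepsilon_0}$ in $C^\infty$, with $f_{\varepsilon_0} \in \Herm^+(V,h_0)$ by the two-sided $C^0$ control on $\log f_i$, and $L_{\varepsilon_0}(f_{\varepsilon_0})=0$. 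I expect the main obstacle to be the uniform a priori $C^0$ estimate: specifically, carrying out the bi-Hermitian integration by parts correctly when two Chern connections, the $\alpha$-weighting, and only the Gauduchon (rather than K\"ahler) condition are in play, so that the positivity and the coercive $\varepsilon_0$-term combine as above; verifying that simplicity cleanly rules out the normalized blow-up is the second delicate point.
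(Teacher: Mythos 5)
Your uniform $C^0$ estimate is correct, and it is in fact the paper's own Lemma \ref{lemma:mepsilonbound}: from $\half P^\alpha(|\log f_\varepsilon|^2)+\varepsilon|\log f_\varepsilon|^2 \varleq m_K|\log f_\varepsilon|$ and the maximum principle one gets $\max_M|\log f_\varepsilon| \varleq m_K/\varepsilon \varleq m_K/\varepsilon_0$ uniformly on $(\varepsilon_0,1]$. But the two steps you build on top of it do not hold together. First, your use of simplicity is vacuous: you run a blow-up argument under the hypothesis $\norm{\log f_i}_{L^2}\to\infty$, which your own second-stage estimate $\norm{\log f_i}_{L^2} \varleq \norm{K_\alpha^0}_{L^2}/\varepsilon_0$ has already excluded. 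So, as written, simplicity never actually does any work in your proof, whereas it is a genuine hypothesis of the theorem and is used at a precise, unavoidable point in the paper's argument; a proof in which it plays no role cannot be complete.

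Second, and this is the real gap, the bootstrap step fails. The equation $L_\varepsilon(f)=0$ says $\sqrt{-1}\Lambda^\alpha\bar\partial(f^{-1}\partial_0 f) = -K_\alpha^0-\varepsilon\log f$, and the left-hand side is not $P^\alpha f$ plus terms controlled by $C^0$ data: expanding $\bar\partial_\pm(f^{-1}\partial_{\pm,0}f)$ produces the term $-f^{-1}\bar\partial_\pm f\, f^{-1}\partial_{\pm,0} f$, which is \emph{quadratic in the first derivatives} of $f$. The linear $L^p$ elliptic estimate for $P^\alpha$ therefore only gives $\norm{f}_{L^p_2} \varleq C\left(1+\norm{\nabla f}_{L^{2p}}^2\right)$ with constants depending on the $C^0$ data, and a $C^0$ bound on $f$ and $f^{-1}$ gives no control of $\norm{\nabla f}_{L^{2p}}$; for matrix-valued (system) equations with quadratic gradient growth there is no general implication from $C^0$ bounds to gradient or $L^p_2$ bounds. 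This is exactly the difficulty the paper circumvents by the path-derivative argument of L\"ubke--Teleman: one differentiates $\varepsilon\mapsto f_\varepsilon$, sets $\phi_\varepsilon = df_\varepsilon/d\varepsilon$ and $\eta_\varepsilon = f_\varepsilon^{-\half}\phi_\varepsilon f_\varepsilon^{-\half}$, and it is here that simplicity enters: it forces the kernel of $\Laplacian_{\bar\partial,\alpha}=\alpha\Laplacian_{\bar\partial_+}+(1-\alpha)\Laplacian_{\bar\partial_-}$ to consist of scalars, giving the Poincar\'e inequality of Lemma \ref{lemma:boundetasquare}, hence the uniform bound $\max_M|\phi_\varepsilon|\varleq C(m)$ of Proposition \ref{prop:fLp2boundsbyf1}(1), then $\norm{\phi_\varepsilon}_{L^p_2}\varleq C(m)(1+\norm{f_\varepsilon}_{L^p_2})$, and finally, by integrating this Gronwall-type inequality from $\varepsilon=1$, the uniform bound $\norm{f_\varepsilon}_{L^p_2}\varleq e^{C(m)(1-\varepsilon)}(1+\norm{f_1}_{L^p_2})$ of Proposition \ref{prop:fLp2boundsbyf1}(5). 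Only with this uniform $L^p_2$ bound in hand can one extract a weakly convergent subsequence and conclude as in Proposition 3.3.6(i) of \cite{LubkeTeleman1995}. To repair your proof you would need either to reproduce this $\varepsilon$-derivative argument (where simplicity does real work), or to supply an independent a priori gradient estimate for solutions of $L_\varepsilon(f)=0$, which is not a consequence of standard elliptic theory.
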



\noindent
To prove Theorem \ref{thm:Jclosed}, we follow the line of argument found in \cite{LubkeTeleman1995}, section \S 3.3. 
The key step of the proof is finding a bound for $\norm{f_\varepsilon}_{L^p_2}$; this is done in Proposition \ref{prop:fLp2boundsbyf1} (5).
We begin by  establishing this and a few other technical results, from which Theorem \ref{thm:Jclosed} will ensue. 

For the rest of this subsection, we work under the hypothesis of Theorem \ref{thm:Jclosed}. 
Let $\varepsilon_0$ and $f_\varepsilon$ be as in the assumption of Theorem \ref{thm:Jclosed}.
We define
\begin{equation}\label{eq:fphiepsilonpair}
 m_\varepsilon := \max_M|\log f_\varepsilon|, \,\,\,\, \phi_\varepsilon := \frac{df_\varepsilon}{d\varepsilon} \,\,\,\, \text{ and } \,\,\,\, \eta_\varepsilon := f_\varepsilon^{-\half} \circ \phi_\varepsilon \circ f^{-\half}.
\end{equation}
Set $$m_K := \max_M |K_\alpha^0|.$$ 
The bound of $\norm{f_\varepsilon}_{L^p_2}$ will then be expressed in terms of $m_K$, $\varepsilon_0$, etc\dots, and will, in particular, be independent of $\varepsilon$
(see Proposition \ref{prop:fLp2boundsbyf1}).

Let us first prove the following counterpart of Lemma 3.3.4 in \cite{LubkeTeleman1995}.
\begin{lemma}\label{lemma:mepsilonbound} 
 Let $\varepsilon \in (0,1]$. Then, for any $f \in \Herm^+(V, h_0)$ such that $L_\varepsilon(f) = 0$, we have
 \begin{enumerate}
  \item $\displaystyle{\half P^\alpha(|\log f|^2) + \varepsilon|\log f|^2 \varleq m_K |\log f|}$,
  \item $\displaystyle{m := \max_M |\log f| \varleq \frac{1}{\varepsilon} m_K}$,
  \item $\displaystyle{m \varleq C \left(\norm{\log f}_{L^2} + m_K\right)}$,
 \end{enumerate}
 where $C$ is a constant depending only on $g$ and $h_0$. In particular, $C$ is independent of $\varepsilon$.
\end{lemma}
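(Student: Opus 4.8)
The plan is to follow the proof of Lemma 3.3.4 in \cite{LubkeTeleman1995}, adapting its two ingredients---a pointwise convexity inequality and the maximum principle---to the $\alpha$-weighted operator $P^\alpha$. The starting observation is that the equation $L_\varepsilon(f) = 0$ reads
\[ K_\alpha^0 + \sqrt{-1}\,\Lambda^\alpha\bar\partial(f^{-1}\partial_0 f) + \varepsilon \log f = 0, \]
so that $\sqrt{-1}\,\Lambda^\alpha\bar\partial(f^{-1}\partial_0 f) = -K_\alpha^0 - \varepsilon \log f$. For part (1), the key input is the purely pointwise inequality of \cite{LubkeTeleman1995} (their Lemma 3.3.3): applied to each complex structure $I_\pm$ separately it gives
\[ \half P_\pm(|\log f|^2) \varleq h_0\!\left(\sqrt{-1}\,\Lambda_\pm\bar\partial_\pm(f^{-1}\partial_{\pm,0} f),\, \log f\right). \]
This is an algebraic consequence of the convexity of the logarithm evaluated on the eigenvalues of $f$, uses no integration by parts, and hence holds verbatim on our Hermitian (merely Gauduchon) manifold. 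Taking the $\alpha$-convex combination yields $\half P^\alpha(|\log f|^2) \varleq h_0(\sqrt{-1}\,\Lambda^\alpha\bar\partial(f^{-1}\partial_0 f),\log f)$. Substituting the rearranged equation and using that $K_\alpha^0$ and $\log f$ are both Hermitian (so that their $h_0$-pairing is real), I obtain
\[ \half P^\alpha(|\log f|^2) \varleq -h_0(K_\alpha^0, \log f) - \varepsilon|\log f|^2, \]
and Cauchy--Schwarz $-h_0(K_\alpha^0,\log f) \varleq |K_\alpha^0|\,|\log f| \varleq m_K|\log f|$ gives exactly part (1).

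For part (2), I would evaluate (1) at a point $x_0 \in M$ where $|\log f|$ attains its maximum $m$. Since $P^\alpha$ is a second-order elliptic operator with negative-definite leading coefficients (as recorded in the discussion preceding Lemma \ref{lemma:PalphaonCinfty}), the maximum principle forces $P^\alpha(|\log f|^2)(x_0) \vargeq 0$. Discarding this nonnegative term in (1) leaves $\varepsilon m^2 \varleq m_K m$, whence $m \varleq \varepsilon^{-1} m_K$.

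Part (3) is the genuinely delicate estimate and the main obstacle, since the bound must be \emph{independent of $\varepsilon$}. Here I would discard instead the favorable term $\varepsilon|\log f|^2 \vargeq 0$ in (1), leaving the differential inequality $\half P^\alpha(|\log f|^2) \varleq m_K|\log f|$, whose coefficients no longer involve $\varepsilon$. Absorbing the right-hand side via $m_K|\log f| \varleq \half m_K(1 + |\log f|^2)$ turns this into a uniformly elliptic subsolution inequality $P^\alpha(u) \varleq c_1 u + c_2$ for $u := |\log f|^2 \vargeq 0$, with $c_1, c_2$ depending only on $m_K$. A standard Moser-iteration (equivalently Green's-function) estimate for the elliptic self-adjoint operator $P^\alpha$---exactly as invoked in \cite{LubkeTeleman1995}---then produces
\[ \max_M u \varleq C'\!\left(\norm{u}_{L^1} + m_K^2\right) = C'\!\left(\norm{\log f}_{L^2}^2 + m_K^2\right), \]
and taking square roots (using $\sqrt{a^2+b^2}\varleq a+b$ for $a,b\vargeq 0$) yields $m \varleq C(\norm{\log f}_{L^2} + m_K)$. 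The crux is to verify that the constant $C$ emitted by the iteration depends only on the fixed geometric data $(g, h_0)$---that is, on the ellipticity and Sobolev constants of $M$ together with the symbol $\alpha|u^{1,0}_+|^2 + (1-\alpha)|u^{1,0}_-|^2$ of $P^\alpha$---and \emph{not} on $f$ or on $\varepsilon$. This uniformity is exactly what the $\varepsilon$-independence of the coefficients $c_1,c_2$ guarantees, and it is what makes the bound usable in the subsequent closedness argument for $J$.
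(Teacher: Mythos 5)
Your proposal is correct and follows essentially the same route as the paper's proof: the same pointwise convexity inequality of L\"ubke--Teleman (which the paper cites as page 74 of \cite{LubkeTeleman1995}) applied to each $P_\pm$ and combined $\alpha$-linearly for (1), the same evaluation at a maximum point via the maximum principle for (2), and the same $\varepsilon$-independent subsolution estimate for (3), where the Moser-iteration bound you describe is exactly the content of Lemma 3.3.2 in \cite{LubkeTeleman1995} invoked by the paper. The only differences are cosmetic, e.g.\ the paper absorbs the right-hand side via $2m_K|\log f| \varleq m_K^2 + |\log f|^2$ rather than your variant $m_K|\log f| \varleq \half m_K(1+|\log f|^2)$.
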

\begin{proof}
 $(1)$: Since $0 = L_\varepsilon(f) = K_\alpha^0 + \sqrt{-1} \Lambda^\alpha \bar\partial(f^{-1}\partial_0 f) + \varepsilon \log f$, we have
 $$\varepsilon|\log f|^2 = - h_0(K_\alpha^0, \log f) - h_0(\sqrt{-1}\Lambda^\alpha\bar\partial(f^{-1}\partial_0 f), \log f).$$
 On the other hand, from \cite{LubkeTeleman1995}, page 74,
 $$h_0(\sqrt{-1}\Lambda_\pm\bar\partial_\pm(f^{-1}\partial_{\pm,0} f)) \vargeq \half P_\pm(|\log f|^2) \Longrightarrow h_0(\sqrt{-1}\Lambda^\alpha\bar\partial(f^{-1}\partial_0 f), \log f) \vargeq \half P^\alpha(|\log f|^2).$$
 Cauchy-Schwarz implies
 $$\varepsilon |\log f|^2 \varleq - h_0(K_\alpha^0, \log f) - \half P^\alpha(|\log f|^2) \varleq |K_\alpha^0| |\log f| - \half P^\alpha(|\log f|^2),$$
 which gives $\displaystyle{\half P^\alpha(|\log f|^2) + \varepsilon|\log f|^2 \varleq m_K |\log f|}$.

 $(2)$: Suppose that $|\log f|^2$ is maximal at $x_0 \in M$, that is, $m = |\log f(x_0)|^2$. By the maximum principle, we therefore have
 $$P^\alpha(|\log f|^2) (x_0) \vargeq 0.$$
 Then, $(1)$ implies that
 $$\varepsilon |\log f(x_0)|^2 \varleq m_K |\log f(x_0)| \Longrightarrow m \varleq \frac{1}{\varepsilon} m_K.$$

 $(3)$: By $(1)$, we have
 $$P^\alpha(|\log f|^2) \varleq 2 m_K |\log f| \varleq m_K^2 + |\log f|^2.$$
 Since $P^\alpha$ is elliptic, it follows from Lemma 3.3.2 in \cite{LubkeTeleman1995} that
 $$m^2 = \max_M |\log f|^2 \varleq C(\norm{|\log f|^2}_{L^1} + m_K^2) = C(\norm{\log f}_{L^2}^2 + m_K^2) \varleq C(\norm{\log f}_{L^2} + m_K)^2.$$
We thus get $m \varleq C(\norm{\log f}_{L^2} + m_K)$.
\end{proof}

The following lemma is similar to Lemma 3.3.1 in \cite{LubkeTeleman1995}.
\begin{lemma}\label{lemma:boundetasquare}
 Let $(V, \bar\partial_+, \bar\partial_-)$ be simple. For any $\varepsilon \in (\varepsilon_0, 1]$, there exists a constant $C := C(m_\varepsilon)$ depending only on $m_\varepsilon$ such that
 $$\alpha\norm{d_+^{f_\varepsilon} \eta_\varepsilon}_{L^2}^2 + (1-\alpha)\norm{d_-^{f_\varepsilon} \eta_\varepsilon}_{L^2}^2 \vargeq C\norm{\eta_\varepsilon}_{L^2}^2.$$
\end{lemma}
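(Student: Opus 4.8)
The plan is to prove the asserted inequality as a Poincar\'e-type spectral-gap estimate for the second-order elliptic operator $\eta\mapsto\alpha\,(d_+^{f_\varepsilon})^{\ast}d_+^{f_\varepsilon}\eta+(1-\alpha)\,(d_-^{f_\varepsilon})^{\ast}d_-^{f_\varepsilon}\eta$ acting on Hermitian endomorphisms, the decisive input being that simplicity of $(V,\bar\partial_+,\bar\partial_-)$ cuts the kernel of $(d_+^{f_\varepsilon},d_-^{f_\varepsilon})$ down to the scalars. Two preliminary observations make such an estimate possible. \emph{First, $\eta_\varepsilon$ is trace-free}: taking the trace of $L_\varepsilon(f_\varepsilon)=0$ and using $\tr K_\alpha^0=0$ together with $\tr(f^{-1}\partial_{\pm,0}f)=\partial_{\pm,0}\log\det f$ gives, for $u_\varepsilon:=\log\det f_\varepsilon$, the scalar equation $P^\alpha(u_\varepsilon)+\varepsilon u_\varepsilon=0$; integrating over $M$ (so that the mean-zero property of $\img(P^\alpha)$ from Lemma~\ref{lemma:PalphaonCinfty} forces $\int_M u_\varepsilon\,\vol_g=0$), then pairing with $u_\varepsilon$ and using positivity of $P^\alpha$, yields $u_\varepsilon\equiv0$, whence $\det f_\varepsilon\equiv1$ and $\tr\eta_\varepsilon=\tr(f_\varepsilon^{-1}\phi_\varepsilon)=\frac{d}{d\varepsilon}\log\det f_\varepsilon=0$. \emph{Second, the kernel is scalar}: if $\eta\in\Herm(V,h_0)$ satisfies $d_+^f\eta=d_-^f\eta=0$, then splitting by type gives $\bar\partial_\pm^f\eta=0$, so $\psi:=\Ad_f^{-\half}\eta$ obeys $\bar\partial_+\psi=\bar\partial_-\psi=0$; by simplicity (Definition~\ref{defn:simpleness}) $\psi=c\,\id_V$, whence $\eta=\Ad_f^{\half}\psi=c\,\id_V$ with $c\in\RR$. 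Thus among trace-free $\eta$ the only $(d_+^f,d_-^f)$-parallel one is $\eta=0$.

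The lemma then follows from the uniform statement that for each $m>0$ there exists $C(m)>0$ with
\[\alpha\norm{d_+^f\eta}_{L^2}^2+(1-\alpha)\norm{d_-^f\eta}_{L^2}^2\vargeq C(m)\norm{\eta}_{L^2}^2\]
for every $f\in\Herm^+(V,h_0)$ solving $L_\varepsilon(f)=0$ with $\max_M|\log f|\varleq m$ and every trace-free $\eta\in\Herm(V,h_0)$; one specialises to $f=f_\varepsilon$, $\eta=\eta_\varepsilon$, $m=m_\varepsilon$, which makes $C$ depend only on $m_\varepsilon$. I would prove this uniform statement by contradiction, exactly as in Lemma~3.3.1 of \cite{LubkeTeleman1995}: if it failed there would be $\varepsilon_j\in(0,1]$, solutions $f_j$ with $\max_M|\log f_j|\varleq m$, and trace-free $\eta_j$ with $\norm{\eta_j}_{L^2}=1$ and $\alpha\norm{d_+^{f_j}\eta_j}_{L^2}^2+(1-\alpha)\norm{d_-^{f_j}\eta_j}_{L^2}^2\to0$; since $\alpha,1-\alpha>0$, both $\norm{d_\pm^{f_j}\eta_j}_{L^2}\to0$. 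The $C^0$ bound on $\log f_j$ bounds $f_j^{\pm\half}$ uniformly and, fed into the elliptic equation $L_{\varepsilon_j}(f_j)=0$, yields uniform $L^p_2$ bounds on $f_j$; after passing to a subsequence, $f_j\to f_\infty$ in $C^1$ and $\eta_j\weakto\eta_\infty$ in $L^2_1$, with $\norm{\eta_\infty}_{L^2}=1$ by the compact embedding $L^2_1\into L^2$ (Rellich).

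Passing to the limit in $d_\pm^{f_j}\eta_j\to0$ then gives $d_\pm^{f_\infty}\eta_\infty=0$, so $\eta_\infty$ is a nonzero, trace-free, $(d_+^{f_\infty},d_-^{f_\infty})$-parallel Hermitian endomorphism, which is impossible by the second observation. This is the contradiction, and the uniform inequality follows.

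The main obstacle is this compactness step, and it is the only place where the bi-Hermitian setting needs genuine care beyond \cite{LubkeTeleman1995}. Concretely, one must verify, without circularity with Proposition~\ref{prop:fLp2boundsbyf1}, that the $C^0$ control coming from $m$ propagates through the quasilinear elliptic equation to uniform $L^p_2$ (hence $C^1$) bounds on the $f_j$, so that the $f_j$-dependent connection coefficients of $d_\pm^{f_j}$ converge strongly and the weak $L^2_1$ limit $\eta_\infty$ really is annihilated by both $d_\pm^{f_\infty}$; and the vanishing of $\eta_\infty$ must then be extracted from simplicity of the \emph{pair} $(\bar\partial_+,\bar\partial_-)$ in the sense of Definition~\ref{defn:simpleness}, rather than from simplicity of a single holomorphic structure as in the classical case. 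Every remaining detail transcribes directly from \cite{LubkeTeleman1995}, Lemma~3.3.1.
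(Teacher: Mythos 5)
Your two preliminary observations are correct and are exactly the ingredients the paper also uses: $\tr\eta_\varepsilon=0$ follows from $\det f_\varepsilon\equiv 1$ (trace of $L_\varepsilon(f_\varepsilon)=0$ plus the normalization $\tr K_\alpha^0=0$), and the characterization of the kernel via $\psi=\Ad_f^{-\half}\eta$ and simplicity of the pair is right. The gap is in the quantitative step. Your contradiction argument runs the compactness on $\eta_j$ against the \emph{twisted} operators $d_\pm^{f_j}$, so you need the connection coefficients of $d_\pm^{f_j}$ (which involve first derivatives of $f_j$) to converge strongly, hence uniform $L^p_2$ bounds on the solutions $f_j$; you propose to get these by feeding the $C^0$ bound $\max_M|\log f_j|\varleq m$ into the equation $L_{\varepsilon_j}(f_j)=0$. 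That step is not available: the equation is quasilinear with a quadratic gradient nonlinearity (the $\Lambda\bigl(\bar\partial f\, f^{-1}\wedge\partial_0 f\bigr)$ terms), which is precisely the borderline situation where $C^0$ control does not upgrade to $C^1$ or $W^{2,p}$ control by standard elliptic theory. In this paper, as in \cite{LubkeTeleman1995}, the $L^p_2$ bound on $f_\varepsilon$ is Proposition \ref{prop:fLp2boundsbyf1}(5), and the proof of that proposition (already in part (1)) invokes Lemma \ref{lemma:boundetasquare}; so within the paper's logical structure your route is circular, and as a standalone claim it would require substantial new a priori estimates that you do not supply. The closing appeal that "every remaining detail transcribes from Lemma 3.3.1 of \cite{LubkeTeleman1995}" does not rescue this, because that lemma is not proved by compactness: its proof is the conjugation/spectral-gap argument described below, so there is nothing of this kind to transcribe.

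The paper's proof (following \cite{LubkeTeleman1995}) avoids every difficulty you flag by conjugating \emph{before} estimating: setting $\psi_\varepsilon=\Ad_{f_\varepsilon}^{-\half}\eta_\varepsilon$, one has the pointwise comparisons $\norm{d_\pm^{f_\varepsilon}\eta_\varepsilon}_{L^2}^2\vargeq C(m_\varepsilon)\norm{\bar\partial_\pm\psi_\varepsilon}_{L^2}^2$ and $|\psi_\varepsilon|\vargeq C(m_\varepsilon)|\eta_\varepsilon|$, so the whole statement reduces to a Poincar\'e inequality for the \emph{fixed} operator $\Laplacian_{\bar\partial,\alpha}=\alpha\,\bar\partial_+^*\bar\partial_+ +(1-\alpha)\,\bar\partial_-^*\bar\partial_-$, whose kernel is $\CC\,\id_V$ by simplicity and whose first nonzero eigenvalue $c_1>0$ is a single constant independent of $\varepsilon$ and of $f$; one applies it to $\psi_\varepsilon$, which satisfies $\int_M\tr(\psi_\varepsilon)\,d\vol_g=0$, and converts back to $\eta_\varepsilon$. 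Only the $C^0$ bound $m_\varepsilon$ ever enters, through the comparison constants. If you prefer to keep a contradiction-style argument, the repair is the same trick: run the compactness on $\psi_j=\Ad_{f_j}^{-\half}\eta_j$ rather than on $\eta_j$. Then $\norm{\bar\partial_\pm\psi_j}_{L^2}\to 0$ while $\norm{\psi_j}_{L^2}$ stays bounded above and below, the G{\aa}rding inequality for the fixed elliptic operator $\Laplacian_{\bar\partial,\alpha}$ gives a uniform $L^2_1$ bound on $\psi_j$, and the weak limit is a nonzero, integral-trace-zero element of $\ker\Laplacian_{\bar\partial,\alpha}$ --- a contradiction with simplicity. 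At no point do derivatives of $f_j$ enter, and the circularity disappears.
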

\begin{proof}
 As in the proof of Lemma 3.3.1 in \cite{LubkeTeleman1995}, we define $\psi_\varepsilon := \Ad_{f_\varepsilon}^{-\half} \eta_\varepsilon$ and obtain
 $$\norm{d_\pm^{f_\varepsilon} \eta_\varepsilon}_{L^2}^2 \vargeq C\norm{\bar\partial_\pm \psi_\varepsilon}_{L^2}^2 = C \<\bar\partial_\pm^* \bar\partial_\pm \psi_\varepsilon, \psi_\varepsilon\>_{L^2} = C\<\Laplacian_{\bar\partial_\pm} \psi_\varepsilon, \psi_\varepsilon\>_{L^2,}$$
 where $\Laplacian_{\bar\partial_\pm} = \bar\partial_\pm^* \bar\partial_\pm$ and $C$ depends only on $m_\varepsilon$. Let $\Laplacian_{\bar\partial, \alpha} := \alpha\Laplacian_{\bar\partial_+} + (1-\alpha)\Laplacian_{\bar\partial_-}$. We then have
 $$\alpha\norm{d_+^{f_\varepsilon} \eta_\varepsilon}_{L^2}^2 + (1-\alpha)\norm{d_-^{f_\varepsilon} \eta_\varepsilon}_{L^2}^2 \vargeq C\<\Laplacian_{\bar\partial,\alpha} \psi_\varepsilon, \psi_\varepsilon\>_{L^2}.$$

 For any $\varphi \in C^\infty(\End(V))$, we compute
 $$\<\Laplacian_{\bar\partial,\alpha} \varphi, \varphi\>_{L^2} = \alpha \norm{\bar\partial_+\varphi}_{L^2}^2 +(1-\alpha) \norm{\bar\partial_-\varphi}_{L^2}^2.$$
Recall from definition \ref{defn:simpleness} that $(V, \bar\partial_+, \bar\partial_-)$ is simple if, for any $\Phi \in C^\infty(\End(V))$
with $\bar \partial_\pm \Phi = 0$, we have $\Phi = c \id_V$ for some $c \in \CC$.
The simpleness of $(V, \bar\partial_+, \bar\partial_-)$ thus implies that $\ker (\Laplacian_{\bar\partial,\alpha}) = c \id_V$. 
 
 Let $c_1$ be the first non-zero eigenvalue of $\Laplacian_{\bar\partial,\alpha}$. Then $c_1 >0$ since $\Laplacian_{\bar\partial,\alpha}$ is non-negative. Moreover, $c_1$ is independent of $\varepsilon$ and
 $$\<\Laplacian_{\bar\partial,\alpha} \varphi, \varphi\>_{L^2} \vargeq c_1\norm{\varphi}_{L^2}^2$$ 
 for any $\varphi$ such that $$\int_M \tr(\varphi) d\vol_g = 0.$$
 Since $|\psi_\varepsilon| = \left|\Ad_{f_\varepsilon}^{-\half} \eta_\varepsilon\right| \vargeq C(m_\varepsilon) |\eta_\varepsilon|$ and
 $$\displaystyle{\tr(\psi_\varepsilon) = \tr(\eta_\varepsilon) = \frac{d}{d\varepsilon} \tr(\log f_\varepsilon) = \frac{d}{d\varepsilon} \log \det f_\varepsilon = 0},$$
 we obtain
 $$\alpha\norm{d_+^{f_\varepsilon} \eta_\varepsilon}_{L^2}^2 + (1-\alpha)\norm{d_-^{f_\varepsilon} \eta_\varepsilon}_{L^2}^2 \vargeq C\<\Laplacian_{\bar\partial,\alpha} \psi_\varepsilon, \psi_\varepsilon\>_{L^2} \vargeq C c_1 \norm{\psi_\varepsilon}_{L^2}^2 \vargeq C'\norm{\eta_\varepsilon}_{L^2}^2$$
 and $C'$ only depends on $m_\varepsilon$.
\end{proof}

The next proposition combines Propositions 3.3.3 and 3.3.5 in \cite{LubkeTeleman1995}.
\begin{prop}\label{prop:fLp2boundsbyf1}
 Suppose $(V, \bar\partial_+, \bar\partial_-)$ is simple and there exists a positive constant $m$ such that $m_\varepsilon \varleq m$ for all $\varepsilon \in (\varepsilon_0, 1]$. 
 Then, there exists a constant $C: = C(m)$ such that 
 \begin{enumerate}
  \item $\displaystyle{\max_M |\phi_\varepsilon| \varleq C(m)}$,
  \item $\displaystyle{\norm{\phi_\varepsilon}_{L^p} \varleq C(m)\left(\norm{\phi_\varepsilon}_{L^p} + \norm{P^\alpha(\phi_\varepsilon)_{L^p}}\right)}$,
  \item $\displaystyle{\norm{P^\alpha(\phi_\varepsilon)}_{L^p} \varleq C(m) \left(1 + \norm{\phi_\varepsilon}_{L^{2p}_1} \norm{f_\varepsilon}_{L^{2p}_1} + \norm{f_\varepsilon}_{L^{2p}_1}^2\right)}$,
  \item $\displaystyle{\norm{\phi_\varepsilon}_{L^p_2} \varleq C(m) \left(1 + \norm{f_\varepsilon}_{L^p_2}\right)}$,
  \item $\displaystyle{\norm{f_\varepsilon}_{L^p_2} \varleq e^{C(m)(1-\varepsilon)} \left(1 + \norm{f_1}_{L^p_2}\right)}$,
 \end{enumerate}
for any $p >1$ and $\varepsilon \in (\varepsilon_0, 1]$.
\end{prop}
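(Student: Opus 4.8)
\textbf{The plan} is to establish the five estimates in the order listed, since each feeds into the next, following Propositions 3.3.3 and 3.3.5 of \cite{LubkeTeleman1995}. The common starting point is that differentiating the defining relation $\hat L(\varepsilon, f_\varepsilon) = 0$ in $\varepsilon$ reproduces the hypothesis of Proposition \ref{prop:linearizedestimate}: since $\partial_\varepsilon \hat L(\varepsilon, f) = f \circ \log f$, the chain rule gives $d_2 \hat L(\varepsilon, f_\varepsilon)(\phi_\varepsilon) + f_\varepsilon \circ \log f_\varepsilon = 0$, which is precisely \eqref{eq:Lhatandlinearized} with $\sigma = 1$ and $\phi = \phi_\varepsilon$. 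Thus Proposition \ref{prop:linearizedestimate} yields
\[ P^\alpha(|\eta_\varepsilon|^2) + 2\varepsilon|\eta_\varepsilon|^2 + \alpha|d_+^{f_\varepsilon}\eta_\varepsilon|^2 + (1-\alpha)|d_-^{f_\varepsilon}\eta_\varepsilon|^2 \varleq -2 h_0(\log f_\varepsilon, \eta_\varepsilon). \]
For (1) I would discard the non-negative gradient terms and apply Cauchy--Schwarz together with $|\log f_\varepsilon| \varleq m_\varepsilon \varleq m$ to get $P^\alpha(|\eta_\varepsilon|^2) + 2\varepsilon|\eta_\varepsilon|^2 \varleq 2m|\eta_\varepsilon|$. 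At a maximum of $|\eta_\varepsilon|^2$ the maximum principle forces $P^\alpha(|\eta_\varepsilon|^2)\vargeq 0$ (as in the proof of Lemma \ref{lemma:mepsilonbound}), so $\varepsilon|\eta_\varepsilon| \varleq m$ there and $\max_M|\eta_\varepsilon| \varleq m/\varepsilon_0$. Since $m_\varepsilon \varleq m$ confines the eigenvalues of $f_\varepsilon$ to $[e^{-m}, e^m]$, converting back through $\phi_\varepsilon = f_\varepsilon^{\half}\eta_\varepsilon f_\varepsilon^{\half}$ bounds $\max_M|\phi_\varepsilon|$, hence $\norm{\phi_\varepsilon}_{L^p}$, by a constant $C(m)$ (depending also on the fixed $\varepsilon_0$).

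Estimate (2) is the interior $L^p$ elliptic estimate for the operator $P^\alpha$, whose ellipticity was recorded via its symbol, so that $\norm{\phi_\varepsilon}_{L^p_2} \varleq C(\norm{\phi_\varepsilon}_{L^p} + \norm{P^\alpha(\phi_\varepsilon)}_{L^p})$. Estimate (3) is where the structure of the equation is exploited. Starting from the explicit linearization \eqref{eq:linearizationLhat} and the identity $d_2 \hat L(\varepsilon, f_\varepsilon)(\phi_\varepsilon) = -f_\varepsilon \circ \log f_\varepsilon$, I would solve for the second-order term and write $P^\alpha(\phi_\varepsilon)$ as a sum of: zeroth-order terms in $\phi_\varepsilon$, bounded by $C(m)$ via (1) and $m_\varepsilon \varleq m$; and bilinear terms of schematic form $(\partial_0 \phi_\varepsilon)(\partial_0 f_\varepsilon)$ and $(\partial_0 f_\varepsilon)^2$. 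The apparent second-order-in-$f_\varepsilon$ contributions are reduced to bounded terms by substituting $\sqrt{-1}\,\Lambda^\alpha\bar\partial(f_\varepsilon^{-1}\partial_0 f_\varepsilon) = -K_\alpha^0 - \varepsilon\log f_\varepsilon$, which comes from $L_\varepsilon(f_\varepsilon) = 0$ itself. Taking $L^p$ norms and applying Hölder with exponents $2p, 2p$ yields exactly (3).

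Finally, (4) and (5) combine these. Substituting (1) and (3) into (2), I would control the mixed Sobolev norms by the Gagliardo--Nirenberg interpolation $\norm{u}_{L^{2p}_1} \varleq C\norm{u}_{L^p_2}^{\half}\norm{u}_{C^0}^{\half}$, using that $\norm{\phi_\varepsilon}_{C^0}$ and $\norm{f_\varepsilon}_{C^0}$ are already $C(m)$-controlled; Young's inequality then absorbs the resulting $\norm{\phi_\varepsilon}_{L^p_2}$ term into the left-hand side, giving (4). For (5), since $\tfrac{d}{d\varepsilon}f_\varepsilon = \phi_\varepsilon$, estimate (4) reads $\bigl|\tfrac{d}{d\varepsilon}\norm{f_\varepsilon}_{L^p_2}\bigr| \varleq \norm{\phi_\varepsilon}_{L^p_2} \varleq C(m)(1 + \norm{f_\varepsilon}_{L^p_2})$, and integrating this differential inequality (Gronwall) from $\varepsilon$ up to $1$ produces $\norm{f_\varepsilon}_{L^p_2} \varleq e^{C(m)(1-\varepsilon)}(1 + \norm{f_1}_{L^p_2})$.

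\textbf{Main obstacle.} The delicate step is the absorption in (4): closing the bilinear estimate of (3) into a bound that is only \emph{linear} in $\norm{f_\varepsilon}_{L^p_2}$ requires the Sobolev multiplication and interpolation inequalities in the correct range of exponents, together with careful tracking of which norms are already $C(m)$-controlled. The remaining steps reduce, respectively, to the maximum principle, off-the-shelf elliptic regularity, Hölder's inequality, and Gronwall's lemma. Note that the simpleness hypothesis is not used directly in (1)--(5); it enters (through Lemma \ref{lemma:boundetasquare}) the separate argument that supplies the standing bound $m_\varepsilon \varleq m$ assumed here.
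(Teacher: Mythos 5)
There is a genuine gap, and it sits exactly where your proof of (1) departs from the paper. You bound $\max_M|\eta_\varepsilon|$ by evaluating the pointwise inequality of Proposition \ref{prop:linearizedestimate} at a maximum of $|\eta_\varepsilon|^2$ and absorbing into the term $2\varepsilon|\eta_\varepsilon|^2$; this yields $\max_M|\eta_\varepsilon|\varleq m/\varepsilon$, hence a constant of the form $C(m)/\varepsilon_0$, not $C(m)$. That is not a cosmetic loss. The whole point of the proposition is that $C$ depends on $m$ alone, because the same estimates are invoked (via Corollary \ref{coro:L2boundgiveslimit}) in the regime $\varepsilon\to 0$: there one assumes a uniform $L^2$ bound for all $\varepsilon\in(0,1]$, deduces $m_\varepsilon\varleq m$ uniformly from Lemma \ref{lemma:mepsilonbound}(3), and needs $\norm{f_\varepsilon}_{L^p_2}$ bounded uniformly as $\varepsilon\to 0$ in order to extract a limit $f_0$ solving $L_0(f_0)=0$. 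With your version of (1), the Gronwall step (5) degrades to a differential inequality $x'(\varepsilon)\vargeq -\left(C(m)/\varepsilon\right)(1+x(\varepsilon))$, whose integration gives only $\norm{f_\varepsilon}_{L^p_2}\varleq C\,\varepsilon^{-C(m)}$, which blows up as $\varepsilon\to 0$ and destroys the compactness argument.

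The paper avoids this by using the simpleness hypothesis \emph{inside} the proof of (1), which is why it appears in the statement: integrating the inequality of Proposition \ref{prop:linearizedestimate} over $M$ (the $P^\alpha$ term integrates away) gives $\alpha\norm{d_+^{f_\varepsilon}\eta_\varepsilon}_{L^2}^2+(1-\alpha)\norm{d_-^{f_\varepsilon}\eta_\varepsilon}_{L^2}^2\varleq C_1(m)\norm{\eta_\varepsilon}_{L^2}$, and then Lemma \ref{lemma:boundetasquare} --- whose constant comes from the first nonzero eigenvalue of $\Laplacian_{\bar\partial,\alpha}$, an $\varepsilon$-independent spectral gap that exists precisely because $(V,\bar\partial_+,\bar\partial_-)$ is simple --- converts the gradient bound into $\norm{\eta_\varepsilon}_{L^2}\varleq C'(m)$. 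The sup bound then follows from $P^\alpha(|\eta_\varepsilon|^2)\varleq|\eta_\varepsilon|^2+m^2$ and the elliptic estimate used in Lemma \ref{lemma:mepsilonbound}(3), with no factor of $1/\varepsilon$ anywhere. Consequently your closing remark is exactly backwards: simpleness \emph{is} used directly in (1) (through Lemma \ref{lemma:boundetasquare}), whereas the standing bound $m_\varepsilon\varleq m$ is \emph{not} supplied by that lemma --- it comes from Lemma \ref{lemma:mepsilonbound}(2) in the proof of Theorem \ref{thm:Jclosed}, and from Lemma \ref{lemma:mepsilonbound}(3) plus the assumed $L^2$ bound in Corollary \ref{coro:L2boundgiveslimit}. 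Your treatment of (2)--(5) (elliptic estimate, differentiating $L_\varepsilon(f_\varepsilon)=0$ plus H\"older, interpolation and absorption, Gronwall) does match the paper, but each of those steps inherits the $\varepsilon$-uniformity of (1), so the proof as written does not establish the proposition.
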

\begin{proof}
 $(1)$: By Proposition \ref{prop:openness}, we may differentiate $\hat L(\varepsilon, f_\varepsilon) = 0$ with respect to $\varepsilon$ and get
 $$d_2\hat L(\varepsilon, f_\varepsilon)(\phi_\varepsilon) + f_\varepsilon \circ \log f_\varepsilon = 0.$$
 By Proposition \ref{prop:linearizedestimate}, with $\sigma = 1$, we have
 \begin{equation}\label{eq:Peta}
  P^\alpha(|\eta_\varepsilon|^2) + 2 \varepsilon|\eta_\varepsilon|^2 + \alpha|d_+^f \eta_\varepsilon|^2 + (1-\alpha)|d_-^f \eta_\varepsilon|^2 \varleq -2 h_0(\log f_\varepsilon, \eta_\varepsilon) \varleq 2 |\log f_\varepsilon||\eta_\varepsilon|.
 \end{equation}

 Since $P^\alpha$ is a positive operator, integrating \eqref{eq:Peta} gives
 $$\alpha \norm{d_+^f \eta_\varepsilon}_{L^2}^2 + (1-\alpha) \norm{d_-^f \eta_\varepsilon}_{L^2}^2 \varleq - 2 \<\log f_\varepsilon, \eta_\varepsilon\>_{L^2} \varleq 2 \norm{\log f}_{L^2} \norm{\eta_\varepsilon}_{L^2} \varleq C_1(m) \norm{\eta_\varepsilon}_{L^2}.$$
Lemma \ref{lemma:boundetasquare} then implies that
 $$C \norm{\eta_\varepsilon}_{L^2}^2 \varleq C_1(m) \norm{\eta_\varepsilon}_{L^2} \Longrightarrow \norm{\eta_\varepsilon}_{L^2} \varleq C'$$
 for some constant $C'$ that depends only on $m$.

 Again, by equation \eqref{eq:Peta}, we have
 $$P^\alpha(|\eta_\varepsilon|^2)  \varleq 2 |\log f_\varepsilon||\eta_\varepsilon|\varleq |\eta_\varepsilon|^2 + m^2.$$
 As in the proof of Lemma \ref{lemma:mepsilonbound} $(3)$, we get
 $$\max_M |\eta_\varepsilon| \varleq C_2 \left(\norm{\eta_\varepsilon}_{L^2} + m\right) \Longrightarrow \max_M |\phi_\varepsilon| \varleq C(m).$$

 $(2)$: This follows from the elliptic estimate for the elliptic operator $P^\alpha$ (see Theorem 9.11 of \cite{GilbargTrudinger1998}).

 $(3)$: The proof on pages 76 and 77 of \cite{LubkeTeleman1995} works here. Differentiating $L_\varepsilon(f_\varepsilon) = 0$ with respect to $\varepsilon$ gives the same identity for $P^\alpha$ (instead of $P$). The rest of the proof is identical.

 $(4)$: This follows from $(1) - (3)$.

 $(5)$: This follows from $(4)$. Indeed, for $t \in [\varepsilon, 1]$, let $x(t) = \norm{f_t}_{L^p_2} $. Then,
 $$\frac{d}{d t}x(t) \vargeq - \norm{\frac{d}{dt} f_t}_{L^p_2} \vargeq - C(m)(1 + x(t)) \Longrightarrow \frac{x'(t)}{1+x(t)} \vargeq -C(m).$$
 Integrating over $[\varepsilon, 1]$, we get
 $$\frac{1+x(1)}{1+x(\varepsilon)} \vargeq e^{-C(m)(1-\varepsilon)} \Longrightarrow \norm{f_\varepsilon}_{L^p_2} \varleq 1 + \norm{f_\varepsilon}_{L^p_2} \varleq e^{C(m)(1 - \varepsilon)}\left(1 + \norm{f_1}_{L^p_2}\right).$$
\end{proof}

\noindent
{\it Proof of Theorem \ref{thm:Jclosed}.}
The item $(5)$ above gives a bound of $\norm{f}_{L^p_2}$. From this point on, the proof of Theorem \ref{thm:Jclosed} is identical to the proof of Proposition 3.3.6 (i) in \cite{LubkeTeleman1995}.
\qed

We end this subsection by stating the following corollary, which is the analogue of Proposition 3.3.6 (ii) in \cite{LubkeTeleman1995}.
\begin{corollary}\label{coro:L2boundgiveslimit}
 Suppose that there exists a constant $C$ such that $\norm{f_\varepsilon}_{L^2} \varleq C$ for all $\varepsilon \in (0, 1]$. 
 Then, there exists a solution $f_0$ of the equation $L_0(f_0) = 0$. In other words, the metric $h$ defined by $h(s, t) := h_0(f_0(s), t)$ is  $\alpha$-Hermitian-Einstein. 
\end{corollary}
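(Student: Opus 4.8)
The plan is to extract a limit of the family $\{f_\varepsilon\}$ as $\varepsilon \to 0$ by feeding the a priori estimates already established into a standard elliptic compactness argument, following the proof of Proposition 3.3.6 (ii) in \cite{LubkeTeleman1995} with $P^\alpha$ in place of $P$. Since $J = (0,1]$, a solution $f_\varepsilon$ of $L_\varepsilon(f_\varepsilon) = 0$ exists for every $\varepsilon \in (0,1]$, and the hypothesis provides a uniform $L^2$ bound on $\log f_\varepsilon$ (the quantity that enters the estimates below). First I would convert this $L^2$ bound into a uniform $C^0$ bound: Lemma \ref{lemma:mepsilonbound} (3) gives $m_\varepsilon = \max_M |\log f_\varepsilon| \varleq C(\norm{\log f_\varepsilon}_{L^2} + m_K) \varleq m$ with $m$ independent of $\varepsilon$. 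In particular the eigenvalues of each $f_\varepsilon$ lie in the fixed interval $[e^{-m}, e^{m}]$, so the family remains in a $C^0$-bounded subset of $\Herm^+(V, h_0)$ that is bounded away from the boundary of the positive cone.

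Next I would obtain a uniform higher-order bound. Fix $p$ large enough that $L^p_2 \into C^1$ compactly. With $m_\varepsilon \varleq m$ in hand, Proposition \ref{prop:fLp2boundsbyf1} (5) yields $\norm{f_\varepsilon}_{L^p_2} \varleq e^{C(m)(1-\varepsilon)}(1 + \norm{f_1}_{L^p_2}) \varleq C'$, again uniformly in $\varepsilon$. As $L^p_2$ is reflexive, there is a sequence $\varepsilon_i \to 0$ with $f_{\varepsilon_i} \weakto f_0$ weakly in $L^p_2$; by compactness of the embedding the convergence is strong in $C^1$, so in particular $f_{\varepsilon_i} \to f_0$, $\partial_{\pm,0} f_{\varepsilon_i} \to \partial_{\pm,0} f_0$ and $f_{\varepsilon_i}^{-1} \to f_0^{-1}$ all in $C^0$. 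The uniform bound $|\log f_{\varepsilon_i}| \varleq m$ passes to the limit, so $|\log f_0| \varleq m$ and $f_0 \in \Herm^+(V, h_0)$ is genuinely positive definite.

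Finally I would pass to the limit in $L_{\varepsilon_i}(f_{\varepsilon_i}) = 0$. The term $K_\alpha^0$ is fixed; the penalising term satisfies $\varepsilon_i \log f_{\varepsilon_i} \to 0$ uniformly since $|\log f_{\varepsilon_i}| \varleq m$; and in the nonlinear term $\sqrt{-1}\,\Lambda^\alpha \bar\partial(f^{-1}\partial_0 f)$ every factor below top order converges in $C^0$, while the single top-order derivative $\bar\partial_\pm \partial_{\pm,0} f_{\varepsilon_i}$ converges weakly in $L^p$ with $C^0$-convergent coefficient $f_{\varepsilon_i}^{-1}$. Testing against smooth sections then gives $L_0(f_0) = 0$ in the weak sense, and an elliptic bootstrap (the leading part of $L_0$ being the elliptic operator $P^\alpha$) upgrades $f_0$ to a smooth solution; the metric $h(s,t) := h_0(f_0 s, t)$ is then $\alpha$-Hermitian-Einstein. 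I expect the only genuinely delicate points to be the two just indicated, namely keeping the limit inside the positive cone $\Herm^+(V, h_0)$ and justifying the weak passage in the nonlinear second-order term, and both are controlled by the uniform $C^0$ bound of the first step; everything else is the compactness template of \cite{LubkeTeleman1995}.
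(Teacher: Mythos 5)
Your proof follows exactly the route the paper takes: the paper's own proof of Corollary \ref{coro:L2boundgiveslimit} is nothing more than a citation of Proposition 3.3.6 (ii) in \cite{LubkeTeleman1995}, and your chain --- uniform $C^0$ bound on $\log f_\varepsilon$ from Lemma \ref{lemma:mepsilonbound} (3), uniform $L^p_2$ bound from Proposition \ref{prop:fLp2boundsbyf1} (5), weak $L^p_2$ compactness together with the compact embedding into $C^1$, passage to the limit in the equation, and elliptic bootstrap via $P^\alpha$ --- is precisely the intended adaptation of that argument (note that Proposition \ref{prop:fLp2boundsbyf1}, and hence your use of it, requires simplicity of $(V,\bar\partial_+,\bar\partial_-)$, which is the standing assumption in this part of the paper and holds in the stable case where the corollary is applied).

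There is, however, one step you assert rather than prove, and as stated it is not automatic: you say ``the hypothesis provides a uniform $L^2$ bound on $\log f_\varepsilon$'', but the hypothesis bounds $\norm{f_\varepsilon}_{L^2}$, not $\norm{\log f_\varepsilon}_{L^2}$, and for a general positive Hermitian endomorphism a bound on $|f|$ gives no control of $|\log f|$, since eigenvalues of $f_\varepsilon$ may degenerate to $0$ as $\varepsilon \to 0$. The missing bridge is the determinant normalization built into the continuity path: with $h_0$ chosen so that $\tr K_\alpha^0 = 0$ (as in the proof that $1 \in J$), taking the trace of $L_\varepsilon(f_\varepsilon) = 0$ gives $P^\alpha(\log\det f_\varepsilon) + \varepsilon \log\det f_\varepsilon = 0$, and the maximum principle forces $\det f_\varepsilon = 1$; this is the same fact the paper uses implicitly in Lemma \ref{lemma:boundetasquare}, where $\tr \eta_\varepsilon = \frac{d}{d\varepsilon}\log\det f_\varepsilon = 0$. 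Once $\det f_\varepsilon = 1$, the smallest eigenvalue of $f_\varepsilon$ is bounded below by $\lambda_{\max}^{-(r-1)}$ with $r = \rk V$, so pointwise $|\log f_\varepsilon| \varleq C\left(1 + |f_\varepsilon|\right)$, and the hypothesis does yield $\norm{\log f_\varepsilon}_{L^2} \varleq C'$. With this observation inserted, Lemma \ref{lemma:mepsilonbound} (3) applies and the remainder of your argument goes through as written.
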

\begin{proof}
The proof is similar to the proof of Proposition 3.3.6 (ii) in \cite{LubkeTeleman1995}
\end{proof}


\subsubsection{Destabilizing subsheaves}\label{subsect:coherentsheavesIpm}
We consider the limit when $\varepsilon \to 0$ and show that $\alpha$-stability (see definition \ref{defn:alphastable}) gives a bound on $\norm{f_\varepsilon}_{L^2}$, as required by Corollary \ref{coro:L2boundgiveslimit}. More precisely, we prove
\begin{prop}\label{prop:destabilizing}
 If $\limsup\limits_{\varepsilon\rightarrow 0} \norm{\log f_\varepsilon}_{L^2} = \infty$, then $(V, \bar\partial_+, \bar\partial_-)$ is not $\alpha$-stable.
\end{prop}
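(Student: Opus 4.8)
The plan is to argue by contradiction in the manner of Uhlenbeck-Yau, adapting the presentation of \cite{LubkeTeleman1995}, \S 3.4: from the blow-up of $\norm{\log f_\varepsilon}_{L^2}$ I will extract a limiting Hermitian endomorphism whose spectral projections produce a coherent subsheaf violating $\alpha$-stability. First I would set $u_\varepsilon := \log f_\varepsilon$ and $l_\varepsilon := \norm{u_\varepsilon}_{L^2}$ and pass to a subsequence along which $l_\varepsilon \to \infty$; each $u_\varepsilon$ is self-adjoint with respect to $h_0$, and reducing to the trace-free part as in \cite{LubkeTeleman1995} we may take $\tr u_\varepsilon = 0$. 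Normalising, put $\tilde u_\varepsilon := u_\varepsilon / l_\varepsilon$, so $\norm{\tilde u_\varepsilon}_{L^2} = 1$. Pairing the equation $L_\varepsilon(f_\varepsilon) = 0$ with $u_\varepsilon$ and integrating, the identity of \cite{LubkeTeleman1995} rewrites the second-order term as a nonnegative weighted energy while $|K_\alpha^0| \le m_K$ controls the zeroth-order term; after dividing by $l_\varepsilon^2$ this produces a uniform bound
$$\alpha\norm{\bar\partial_+\tilde u_\varepsilon}_{L^2}^2 + (1-\alpha)\norm{\bar\partial_-\tilde u_\varepsilon}_{L^2}^2 \le \frac{C\,m_K}{l_\varepsilon},$$
and hence a uniform $L^2_1$ bound on $\tilde u_\varepsilon$.

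By weak $L^2_1$-compactness and the compact embedding $L^2_1 \hookrightarrow L^2$, I would extract a weak limit $\tilde u_\infty \in L^2_1\Herm(V, h_0)$ with $\norm{\tilde u_\infty}_{L^2} = 1$ (hence $\tilde u_\infty \ne 0$), self-adjoint and trace-free. The crucial analytic input is the Uhlenbeck-Yau estimate, adapted as in \cite{LubkeTeleman1995}, \S 3.4: passing to the limit in the equation paired with $\tilde u_\varepsilon$ shows that, for every smooth $\Phi \colon \RR \times \RR \to \RR$ with $\Phi(x,y) \le (x-y)^{-1}$ for $x > y$,
$$\int_M \tr\big(\tilde u_\infty\, K_\alpha^0\big)\,\vol_g + \alpha\int_M \langle \Phi(\tilde u_\infty)(\bar\partial_+ \tilde u_\infty),\bar\partial_+\tilde u_\infty\rangle + (1-\alpha)\int_M \langle \Phi(\tilde u_\infty)(\bar\partial_- \tilde u_\infty),\bar\partial_-\tilde u_\infty\rangle \le 0.$$
Because $\alpha$ and $1-\alpha$ are both strictly positive, the usual choices of $\Phi$ force the eigenvalues of $\tilde u_\infty$ to be constant and, for each spectral projection $\pi$, force \emph{simultaneously} $(\id_V - \pi)\bar\partial_+\pi = 0$ and $(\id_V - \pi)\bar\partial_-\pi = 0$ weakly; were either identity to fail, the corresponding weighted energy would diverge against the finite right-hand side. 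Thus $\pi$ is a self-adjoint $L^2_1$ projection that is weakly holomorphic for $\bar\partial_+$ and for $\bar\partial_-$ at once.

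With such a $\pi$ I would invoke the regularity theorem of Uhlenbeck-Yau (\cite{UhlenbeckYau86}; see \cite{LubkeTeleman1995}, \S 3.4): a self-adjoint $L^2_1$ projection that is weakly $\bar\partial_\pm$-holomorphic determines a coherent subsheaf together with an analytic singular set off which it is a genuine subbundle. Applied with respect to $I_+$ this gives $\FFC_+ \subset (V, \bar\partial_+)$ with singular set $S_+$; applied with respect to $I_-$ it gives $\FFC_- \subset (V, \bar\partial_-)$ with singular set $S_-$; both coincide with $\img\pi$ on $M \setminus S$, where $S := S_+ \union S_-$ is analytic of codimension at least $2$. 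Hence $\FFC := (\FFC_+, \FFC_-)$ is a coherent subsheaf of $(V, \bar\partial_+, \bar\partial_-)$ in the sense of Definition \ref{defn:coherentsheaves}, and it is proper since $0 \ne \pi \ne \id_V$ (as $\norm{\tilde u_\infty}_{L^2} = 1$ and $\tr\tilde u_\infty = 0$). Finally, the Chern-Weil expressions for $\deg_+(\FFC_+)$ and $\deg_-(\FFC_-)$ in terms of $\pi$, $F_+$ and $F_-$, inserted into the $\alpha$-weighted inequality above exactly as in the classical slope estimate of \cite{LubkeTeleman1995}, yield $\mu_\alpha(\FFC) \vargeq \mu_\alpha(V)$, contradicting $\alpha$-stability. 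This proves the proposition.

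The main obstacle is the simultaneity: showing that a single weak limit $\tilde u_\infty$ yields spectral projections that are weakly holomorphic for \emph{both} $\bar\partial_+$ and $\bar\partial_-$, and then verifying that the two coherent sheaves it produces---one on $(M, I_+)$, the other on $(M, I_-)$---really do agree off a common analytic set of codimension at least $2$, as required by Definition \ref{defn:coherentsheaves}. The feature that makes this work is precisely the strict positivity of the weights $\alpha$ and $1-\alpha$, which keeps the $+$ and $-$ energy contributions from cancelling each other; once the structure of $\pi$ is in hand, the degree bookkeeping is a routine $\alpha$-weighted rerun of the classical computation.
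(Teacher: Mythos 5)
Your proposal is correct in outline but takes a genuinely different route from the paper's. The paper (following \cite{LubkeTeleman1995}, \S 3.4) normalizes \emph{multiplicatively}: it sets $\rho(\varepsilon) = e^{-M_\varepsilon}$, with $M_\varepsilon$ the maximum of the largest eigenvalue of $\log f_\varepsilon$, shows in Proposition \ref{prop:weaklyconvergence} that $f_i := \rho(\varepsilon_i)f_{\varepsilon_i}$ is bounded in $L^2_1$, and obtains the destabilizing object directly as the single projection $\pi = \id_V - \lim_{\sigma_j \to 0}\bigl(\lim_i f_i\bigr)^{\sigma_j}$; weak holomorphicity of $\pi$ for both $\bar\partial_\pm$ comes from the pointwise power estimates of Lemma \ref{lemma:termsestimate}, and destabilization from the Chern--Weil formula quoted at the end of the proof. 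You instead normalize \emph{additively} ($\tilde u_\varepsilon = \log f_\varepsilon / \norm{\log f_\varepsilon}_{L^2}$) and run the Uhlenbeck--Yau/Simpson spectral argument ($\Phi$-test functions, constancy of eigenvalues, spectral projections, one of which destabilizes), as in \cite{UhlenbeckYau86} and \cite{Simpson88}. Both routes converge at the end: Uhlenbeck--Yau regularity applied to each complex structure separately, agreement of $\FFC_+$ and $\FFC_-$ off $S_+ \union S_-$ as required by Definition \ref{defn:coherentsheaves}, and an $\alpha$-weighted slope estimate; your observation that strict positivity of $\alpha$ and $1-\alpha$ forces the $+$ and $-$ energies to vanish separately is exactly the mechanism the paper uses too. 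What the paper's route buys is avoidance of the spectral machinery: by pairing the equation with powers $f^\sigma$ rather than with $\log f$, every integration by parts takes the form $\int_M P^\alpha(\mbox{scalar})\,d\vol_g = 0$, which is precisely what the Gauduchon hypothesis supplies. Your route yields finer information (constant eigenvalues, a whole filtration) at the cost of redoing the Uhlenbeck--Yau limit analysis in the bi-Hermitian Gauduchon setting.

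Two points need repair. First, your displayed estimate $\alpha\norm{\bar\partial_+\tilde u_\varepsilon}_{L^2}^2 + (1-\alpha)\norm{\bar\partial_-\tilde u_\varepsilon}_{L^2}^2 \varleq C m_K / l_\varepsilon$ is not what the pairing gives. Dividing by $l_\varepsilon^2$ bounds the \emph{weighted} energy $\int \langle \Psi(l_\varepsilon \tilde u_\varepsilon)(\bar\partial_\pm \tilde u_\varepsilon), \bar\partial_\pm \tilde u_\varepsilon\rangle$ by $Cm_K/l_\varepsilon$, but the weight $\Psi(l\lambda_i, l\lambda_j) = \bigl(e^{l(\lambda_i - \lambda_j)} - 1\bigr)/\bigl(l(\lambda_i - \lambda_j)\bigr)$ itself degenerates like $1/l_\varepsilon$ on eigenvalue pairs with $\lambda_i < \lambda_j$, so converting to the unweighted energy yields only a \emph{uniform} bound; moreover that conversion requires the uniform sup bound $\max_M |\tilde u_\varepsilon| \varleq C$, which you should quote from Lemma \ref{lemma:mepsilonbound} (3). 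A uniform bound suffices for weak $L^2_1$ compactness, so this is fixable, but the decay rate you assert is false. Second, the derivation of the $\Phi$-inequality for $\tilde u_\infty$ rests on an integration by parts whose divergence term does not vanish for free outside the K\"ahler world: one must identify it as $\half P^\alpha(|\log f_\varepsilon|^2)$, using $h_0(f_\varepsilon^{-1}\partial_{\pm,0} f_\varepsilon, \log f_\varepsilon) = \half \partial_\pm |\log f_\varepsilon|^2$, so that its integral vanishes by the Gauduchon assumption --- the same mechanism behind Lemma \ref{lemma:mepsilonbound} (1) and Step II of Proposition \ref{prop:weaklyconvergence}. Your outline invokes the Uhlenbeck--Yau machinery ``as usual'' without this check, and it is the one place where a K\"ahler-specific argument could silently fail in the present setting.
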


\noindent
This corresponds to Proposition 3.4.1 in \cite{LubkeTeleman1995}. 

To prove Proposition \ref{prop:destabilizing}, we again follow \cite{LubkeTeleman1995}, this time section \S 3.4, 
to show the existence of a \emph{destabilizing} coherent subsheaf of $(V, \bar\partial_+, \bar\partial_-)$ under the assumptions of Proposition \ref{prop:destabilizing}.
We first recall, from definition \ref{defn:coherentsheaves}, the key notion of \emph{coherent subsheaf} in this context.
 \begin{defn*}
 Let $\FFC_\pm$ be coherent subsheaves of $(V, \bar\partial_\pm)$, respectively. The pair $\FFC := (\FFC_+, \FFC_-)$ is said to be a \emph{coherent subsheave} of $(V, \bar\partial_+, \bar\partial_-)$  if there exists analytic subsets $S_+$  and $S_-$ of $(M, I_+)$ and $(M, I_-)$, respectively, such that
  \begin{enumerate}
   \item $S := S_-\union S_+$ have codimension at least $2$;
   \item $\FFC_\pm|_{M \setminus S_\pm}$ are locally free and $\FFC_-|_{M\setminus S} = \FFC_+|_{M\setminus S}$.
  \end{enumerate}
The \emph{rank} of $\FFC$ is the rank of $\FFC_\pm|_{M \setminus S_\pm}$ and the \emph{$\alpha$-slope} of $\FFC$ is
  $$\mu_\alpha(\FFC) := \alpha \frac{\deg_+(\FFC_+)}{\rk(\FFC_+)} + (1-\alpha) \frac{\deg_-(\FFC_-)}{\rk(\FFC_-)}.$$
\end{defn*}

We also consider the following objects.

\begin{defn}\label{defn:weaklyholomorphic}
 An endomorphism $\pi \in L^2_1(\End(V))$ is a \emph{weakly holomorphic subbundle} of $(V, \bar\partial_\pm)$ if
 $$\pi^2 = \pi^* = \pi \text{ and } \left(\id_V - \pi\right) \circ \bar\partial_\pm(\pi) = 0 \in L^1(\End(V)).$$
\end{defn}

\begin{remark*}
Notice that, since $M$ is compact, we have $L^2(\End(V)) \into L^1(\End(V))$ and
$$L^2(\End(V)) \times L^2(\End(V)) \to L^1(\End(V)) : (f, g) \mapsto f\circ g.$$
\end{remark*}

The following result of Uhlenbeck-Yau, which is stated as Theorem 3.4.3 in \cite{LubkeTeleman1995}, then tells us that weakly holomorphic subbundles correspond to coherent subsheaves.
\begin{theorem}[Uhlenbeck-Yau \cite{UhlenbeckYau86,UhlenbeckYau89}]
A weakly holomorphic subbundle $\pi$ of $V$ represents a coherent subsheaf $\FFC$ of $V$. More precisely, there exists a coherent subsheaf $\FFC$ of $V$ and an analytic subset $S \subset M$ of codimension at least $2$ such that $\FFC|_{M \setminus S}$ is a holomorphic subbundle of $V|_{M\setminus S}$ and the orthogonal projection of $E|_{M\setminus S}$ onto $\FFC|_{M \setminus S}$ is given by $\pi$.
\end{theorem}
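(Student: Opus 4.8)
The plan is to deduce this statement from the classical Uhlenbeck--Yau regularity theorem, applied separately to each of the two complex structures $I_+$ and $I_-$, and then to repackage the output as a coherent subsheaf in the sense of Definition \ref{defn:coherentsheaves}. The starting observation is that a weakly holomorphic subbundle $\pi$ of $(V,\bar\partial_+,\bar\partial_-)$, as defined in Definition \ref{defn:weaklyholomorphic}, satisfies $\pi^2=\pi^*=\pi$ together with $(\id_V-\pi)\circ\bar\partial_\pm(\pi)=0$ for \emph{both} signs. In particular, $\pi$ is simultaneously a weakly holomorphic subbundle of the ordinary holomorphic bundle $(V,\bar\partial_+)$ on the complex manifold $(M,I_+)$ and of $(V,\bar\partial_-)$ on $(M,I_-)$. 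Since $\pi$ lies in $L^2_1(\End(V))$ and each $\bar\partial_\pm\pi$ lies in $L^2$, the hypotheses of the classical theorem are met for each complex structure individually.

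First, applying the classical Uhlenbeck--Yau theorem to $(V,\bar\partial_+)$ produces a coherent subsheaf $\FFC_+$ of $(V,\bar\partial_+)$ and an analytic subset $S_+\subset(M,I_+)$ of codimension at least $2$ such that $\FFC_+|_{M\setminus S_+}$ is a holomorphic subbundle of $V|_{M\setminus S_+}$ and $\pi|_{M\setminus S_+}$ is exactly the orthogonal projection onto $\FFC_+|_{M\setminus S_+}$. Running the identical argument for $(V,\bar\partial_-)$ yields $\FFC_-$ and an analytic subset $S_-\subset(M,I_-)$ with the analogous properties, the projection again being $\pi$.

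It then remains to check that the pair $\FFC:=(\FFC_+,\FFC_-)$ satisfies the two conditions of Definition \ref{defn:coherentsheaves}. Setting $S:=S_+\union S_-$, the union of two analytic sets of codimension at least $2$ again has codimension at least $2$, so condition (1) holds. For condition (2), each $\FFC_\pm|_{M\setminus S_\pm}$ is locally free by construction, and off $S$ both $\FFC_+$ and $\FFC_-$ coincide with the image subbundle $\img(\pi)|_{M\setminus S}$; since a single projection $\pi$ determines both, we obtain $\FFC_+|_{M\setminus S}=\FFC_-|_{M\setminus S}$ as subbundles of $V$. Thus $(\FFC_+,\FFC_-)$ is a coherent subsheaf of $(V,\bar\partial_+,\bar\partial_-)$ whose orthogonal projection away from $S$ is $\pi$, as claimed.

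The genuine analytic content sits entirely in the classical Uhlenbeck--Yau result, and that is where the main obstacle lies: the proof that an $L^2_1$ projection $\pi$ with $(\id_V-\pi)\circ\bar\partial(\pi)=0$ is, after removal of an analytic set of codimension at least $2$, a smooth holomorphic subbundle. This rests on an elliptic bootstrap together with a removable-singularity argument for the current associated with $\pi$ (controlling its second fundamental form in $L^2$ and extracting coherence of the resulting sheaf), which I would not reproduce but import as a black box from \cite{UhlenbeckYau86,UhlenbeckYau89,LubkeTeleman1995}. In the bi-Hermitian setting the only point beyond this black box is the elementary remark that the same projection $\pi$ serves both complex structures, so the two coherent subsheaves automatically agree wherever both are locally free.
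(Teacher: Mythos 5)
Your proposal is correct and takes essentially the same route as the paper: the paper gives no proof of this statement at all, importing it as a pure citation of the classical single-complex-structure Uhlenbeck--Yau regularity theorem (Theorem 3.4.3 in \cite{LubkeTeleman1995}), which is exactly the black box you invoke. The only extra content in your write-up --- that one projection $\pi$ serves both structures, so the sheaves $\FFC_\pm$ produced for $(V,\bar\partial_+)$ and $(V,\bar\partial_-)$ both equal $\img(\pi)$ off $S=S_+\cup S_-$ and hence assemble into a coherent subsheaf in the sense of Definition \ref{defn:coherentsheaves} --- is precisely the elementary observation the paper makes right after the statement and uses in the proof of Proposition \ref{prop:destabilizing}, so there is no gap.
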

\noindent
By the Uhlenbeck-Yau Theorem, we then only need to construct a simultaneous weakly holomorphic subbundle $\pi$ of both $(V,\bar\partial_\pm)$ that destabilizes $(V, \bar\partial_+, \bar\partial_-)$. 
This will occupy the rest of this section.
We start with the following lemma, which corresponds to Lemma 3.4.4 in \cite{LubkeTeleman1995}.
\begin{lemma}\label{lemma:termsestimate}
 Let $f \in \Herm^+(V, h_0)$ and $0 < \sigma \varleq 1$. Then,
 \begin{enumerate}
  \item $\displaystyle{\sqrt{-1}\Lambda_\pm h_0(f^{-1}\partial_{\pm,0} f, \partial_{\pm,0}(f^\sigma)) \vargeq \left|f^{-\frac{\sigma}{2}}\partial_{\pm,0}(f^\sigma)\right|^2}$.
  \item Suppose further that $L_\varepsilon(f) = 0$ for some $\varepsilon >0$. Then,
  $$\frac{1}{\sigma}P^\alpha(\tr f^\sigma) + \varepsilon h_0(\log f, f^\sigma) + \alpha \left|f^{-\frac{\sigma}{2}}\partial_{+,0}(f^\sigma)\right|^2 + (1-\alpha) \left|f^{-\frac{\sigma}{2}}\partial_{-,0}(f^\sigma)\right|^2 \varleq - h_0(K_\alpha^0, f^\sigma).$$
 \end{enumerate}
\end{lemma}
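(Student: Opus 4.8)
The plan is to prove the two statements separately, reducing each to the single-complex-structure computations of \cite{LubkeTeleman1995}, Lemma 3.4.4, applied independently to $I_+$ and $I_-$, and combining the two only through the convex weights $\alpha$ and $1-\alpha$ at the very end. The key observation is that (1) involves one complex structure at a time and is purely pointwise, while (2) couples them only through the already-decomposed equation $L_\varepsilon(f)=0$.

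For (1), I would fix a point, diagonalize the positive Hermitian endomorphism $f$ with eigenvalues $\mu_1,\dots,\mu_r$ in an $h_0$-orthonormal eigenframe, and use the spectral-calculus identity $[\partial_{\pm,0}(f^\sigma)]_{ij}=\tfrac{\mu_i^\sigma-\mu_j^\sigma}{\mu_i-\mu_j}[\partial_{\pm,0}f]_{ij}$ (with diagonal value $\sigma\mu_i^{\sigma-1}$). Expanding both sides of (1) in this frame and summing over a unitary coframe reduces the inequality, for the coefficient of each $|[\partial_{\pm,0}f]_{ij}|^2$, to the elementary bound
\[ \mu_i^{\sigma-1}\ \vargeq\ \frac{\mu_i^\sigma-\mu_j^\sigma}{\mu_i-\mu_j},\qquad \mu_i,\mu_j>0,\ 0<\sigma\varleq 1. \]
This last inequality follows from a two-line case analysis on the sign of $\mu_i-\mu_j$ together with the monotonicity of $t\mapsto t^{\sigma-1}$ (note $\sigma-1\varleq 0$); it is the only genuine computation, and it is exactly the one in \cite{LubkeTeleman1995} applied to each of $(M,I_+)$ and $(M,I_-)$.

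For (2), the idea is to pair $L_\varepsilon(f)=0$ with $f^\sigma$ in the $h_0$-trace pairing and integrate the second-order term by parts pointwise. Using the splitting $L_\varepsilon=\alpha L^+_\varepsilon+(1-\alpha)L^-_\varepsilon$ from Lemma \ref{lemma:Lhat}, it suffices to treat each $T_\pm:=h_0\big(\sqrt{-1}\Lambda_\pm\bar\partial_\pm(f^{-1}\partial_{\pm,0}f),\,f^\sigma\big)$ and then form the convex combination. For a fixed sign, the graded Leibniz rule applied to the scalar $(1,0)$-form $\tr((f^{-1}\partial_{\pm,0}f)f^\sigma)$, together with the two trace identities $\tr((f^{-1}\partial_{\pm,0}f)f^\sigma)=\tfrac1\sigma\,\partial_{\pm,0}(\tr f^\sigma)$ and $h_0(f^{-1}\partial_{\pm,0}f,\partial_{\pm,0}f^\sigma)=\tr\big((f^{-1}\partial_{\pm,0}f)\wedge\bar\partial_\pm f^\sigma\big)$ (the latter using that $f^\sigma$ is Hermitian, so $\bar\partial_\pm f^\sigma=(\partial_{\pm,0}f^\sigma)^{*}$ for the Chern connection), yields the pointwise identity
\[ T_\pm=\frac{1}{\sigma}P_\pm(\tr f^\sigma)+\sqrt{-1}\Lambda_\pm h_0\big(f^{-1}\partial_{\pm,0}f,\ \partial_{\pm,0}f^\sigma\big). \]
Bounding the gradient term below by $\big|f^{-\frac\sigma2}\partial_{\pm,0}(f^\sigma)\big|^2$ via part (1) gives $T_\pm\vargeq \tfrac1\sigma P_\pm(\tr f^\sigma)+\big|f^{-\frac\sigma2}\partial_{\pm,0}(f^\sigma)\big|^2$. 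Forming $\alpha T_++(1-\alpha)T_-$, recognizing the left side as $h_0(\sqrt{-1}\Lambda^\alpha\bar\partial(f^{-1}\partial_0 f),f^\sigma)$, and substituting its value $-h_0(K_\alpha^0,f^\sigma)-\varepsilon h_0(\log f,f^\sigma)$ from $L_\varepsilon(f)=0$ gives the claimed inequality after rearranging.

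The step I expect to be most delicate is keeping the pairing conventions consistent in (2): one must check that $\sqrt{-1}\Lambda_\pm$ combined with the Hermitian $h_0$-pairing of $\End(V)$-valued $(1,0)$-forms matches exactly the contraction appearing in (1), and that the adjoint identity $\bar\partial_\pm f^\sigma=(\partial_{\pm,0}f^\sigma)^{*}$ is inserted with the sign that lands the by-parts term on the $\vargeq$ side of (1) rather than the $\varleq$ side. Everything else is a per-complex-structure transcription of \cite{LubkeTeleman1995}, the bi-Hermitian content entering only through the harmless convex combination with weights $\alpha$ and $1-\alpha$.
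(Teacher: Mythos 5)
Your proposal is correct and follows essentially the same route as the paper: pair $L_\varepsilon(f)=0$ with $f^\sigma$, use the graded Leibniz rule together with the trace identity $\tr((f^{-1}\partial_{\pm,0}f)f^\sigma)=\tfrac1\sigma\partial_{\pm,0}(\tr f^\sigma)$ to split off $\tfrac1\sigma P^\alpha(\tr f^\sigma)$, bound the remaining gradient term below by part (1), and rearrange. The only cosmetic differences are that you reprove part (1) via eigenvalue diagonalization where the paper simply cites Lemma 3.4.4(i) of L\"ubke--Teleman, and that you perform the Leibniz decomposition for each complex structure separately before taking the convex combination, whereas the paper writes it directly for the $\alpha$-combined operators.
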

\begin{proof}
 The first part follows from Lemma 3.4.4 (i) of \cite{LubkeTeleman1995}. The second part is similar, but we give the details below. Since $L_\varepsilon(f) = K_\alpha^0 + \sqrt{-1} \Lambda^\alpha \bar\partial(f^{-1}\partial_0 f) + \varepsilon \log f = 0$, we have
  $$h_0(K_\alpha^0, f^\sigma) + \sqrt{-1} h_0(\Lambda^\alpha \bar\partial(f^{-1}\partial_0 f), f^\sigma) + \varepsilon h_0(\log f, f^\sigma) = 0.$$
 The second term above can be rewritten as
 $$h_0(\Lambda^\alpha \bar\partial(f^{-1}\partial_0 f), f^\sigma) = \Lambda^\alpha \bar\partial h_0(f^{-1}\partial_0 f, f^\sigma) + \Lambda^\alpha h_0(f^{-1}\partial_0 f, \partial_0 f^\sigma),$$
 where the terms on the right denote the $\alpha$-linear combinations of the respective terms with ${}_\pm$-subscripts. More precisely, we have
 \begin{equation*}
  \begin{split}
   & \sqrt{-1}\Lambda^\alpha h_0(f^{-1}\partial f, \partial f^\sigma) \\
   & := \sqrt{-1}\alpha\Lambda_+ h_0(f^{-1}\partial_{+,0} f, \partial_{+,0} f^\sigma) + \sqrt{-1}(1-\alpha)\Lambda_- h_0(f^{-1}\partial_{-,0} f, \partial_{-,0} f^\sigma) \\
   & \vargeq \alpha \left|f^{-\frac{\sigma}{2}}\partial_{+,0}(f^\sigma)\right|^2 + (1-\alpha)\left|f^{-\frac{\sigma}{2}}\partial_{-,0}(f^\sigma)\right|^2
  \end{split}
 \end{equation*}
 by part $(1)$, and
 \begin{equation*}
  \begin{split}
   & \sqrt{-1}\Lambda^\alpha h_0(f^{-1}\partial_0 f, \partial_0 f^\sigma) \\
   & := \alpha\sqrt{-1}\Lambda_+ h_0(f^{-1}\partial_{+,0} f, \partial_{+,0} f^\sigma) + (1-\alpha)\sqrt{-1}\Lambda_- h_0(f^{-1}\partial_{-,0} f, \partial_{-,0} f^\sigma) \\
   & = \frac{1}{\sigma} \left(\alpha P_+(\tr f^\sigma) + (1-\alpha) P_-(\tr f^\sigma) \right) = \frac{1}{\sigma} P^\alpha(\tr f^\sigma).
  \end{split}
 \end{equation*}
 The inequality in $(2)$ then follows.
\end{proof}

For the rest of this subsection, we assume that the assumptions of Proposition \ref{prop:destabilizing} hold, that is,
$$\limsup\limits_{\varepsilon\rightarrow 0} \norm{\log f_\varepsilon}_{L^2} = \infty.$$
For any $\varepsilon > 0$ and $x \in M$, let $\lambda(\varepsilon, x)$ be the largest eigenvalue of $\log f_\varepsilon$, 
$M_\varepsilon:= \max\{\lambda(\varepsilon, x): x\in M\}$ 
and 
$$\rho(\varepsilon) := e^{-M_\varepsilon}.$$
The next proposition corresponds to parts Proposition $3.4.6$ (i) and (ii) in \cite{LubkeTeleman1995}.
\begin{prop}\label{prop:weaklyconvergence}
 There exists a subsequence $\varepsilon_i$ of positive real numbers such that
  $$\lim\limits_{i\to \infty}\varepsilon_i = 0 \text{ and } \lim\limits_{i \to \infty} \norm{\log f_{\varepsilon_i}}_{L^2} = \infty.$$
 Let $f_i := \rho(\varepsilon_i)f_{\varepsilon_i}$. Then, in $L^2_1(\Herm(V, h_0))$, we have:
 \begin{enumerate}
  \item There exists $0 \neq f_\infty$ such that $f_i \weakto f_\infty$ weakly in $L^2_1$.
  \item There exist $0 \neq \tilde f_\infty$ and a sequence $\sigma_j \to 0$ in $(0,1)$ such that $\displaystyle{f_\infty^{\sigma_j} \weakto \tilde f_\infty}$ weakly in $L^2_1$.
 \end{enumerate}
\end{prop}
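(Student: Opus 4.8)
The plan is to follow the template of Proposition 3.4.6 in \cite{LubkeTeleman1995}, adapting each estimate so that the $\alpha$-weighted combination of the two Chern data plays the role of the single Chern connection in the classical argument. The key analytic engine is the inequality in Lemma \ref{lemma:termsestimate} (2), which now carries both $\partial_{+,0}$- and $\partial_{-,0}$-terms weighted by $\alpha$ and $1-\alpha$; since both weights are strictly positive on $(0,1)$, every place where L\"ubke--Teleman extract an $L^2_1$-bound from a single $\left|f^{-\sigma/2}\partial_0(f^\sigma)\right|^2$ term will here produce a bound on the convex combination, and hence on each piece separately.

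First I would extract the subsequence $\varepsilon_i \to 0$ with $\norm{\log f_{\varepsilon_i}}_{L^2} \to \infty$ directly from the hypothesis $\limsup_{\varepsilon\to 0}\norm{\log f_\varepsilon}_{L^2} = \infty$; this is immediate from the definition of $\limsup$. The normalization $f_i := \rho(\varepsilon_i) f_{\varepsilon_i} = e^{-M_{\varepsilon_i}} f_{\varepsilon_i}$ ensures that the largest eigenvalue of $\log f_i$ is $0$, so $f_i \leq \id_V$ and $\tr f_i$, $\det f_i$ are uniformly controlled; in particular $\norm{f_i}_{L^\infty}$ is bounded. To upgrade this to a weak $L^2_1$-limit I would apply Lemma \ref{lemma:termsestimate} (2) with $\sigma = 1$ to each $f_{\varepsilon_i}$: integrating over the compact $M$, the $P^\alpha$-term integrates to zero (by Lemma \ref{lemma:PalphaonCinfty} (2), its image is the mean-zero functions), the $\varepsilon$-term is nonnegative up to the uniform $m_K$-bound, and the right-hand side $-h_0(K_\alpha^0, f^\sigma)$ is controlled by $m_K \norm{f_i}_{L^\infty}$. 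This yields a uniform bound
\[
\alpha\norm{f_i^{-1/2}\partial_{+,0} f_i}_{L^2}^2 + (1-\alpha)\norm{f_i^{-1/2}\partial_{-,0} f_i}_{L^2}^2 \leq C,
\]
and combining with the $L^\infty$-bound on $f_i$ gives a uniform $L^2_1$-bound. By weak compactness of bounded sets in the Hilbert space $L^2_1(\Herm(V,h_0))$, a further subsequence converges weakly to some $f_\infty$. The nontriviality $f_\infty \neq 0$ is the delicate point: one must show the weak limit does not collapse. Here I would reproduce the L\"ubke--Teleman argument showing that $\tr f_\infty$ is bounded below in an integral sense — the normalization forces $\max$ eigenvalue $=1$ pointwise for each $f_i$ (after exponentiating), so $\norm{f_i}_{L^2}$ stays bounded away from $0$, and lower semicontinuity of the norm under weak convergence (together with the Rellich compact embedding $L^2_1 \hookrightarrow L^2$, which promotes weak $L^2_1$-convergence to strong $L^2$-convergence) transfers this lower bound to $f_\infty$.

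For part (2), I would apply Lemma \ref{lemma:termsestimate} (2) again, but now to $f_\infty^{\sigma}$ for $\sigma \in (0,1)$, exploiting that the estimate is uniform in $\sigma$; this gives a family $\{f_\infty^\sigma\}$ bounded in $L^2_1$ uniformly as $\sigma \to 0$. Choosing $\sigma_j \to 0$ and again invoking weak compactness in $L^2_1$ produces a weak limit $\tilde f_\infty$. The nontriviality $\tilde f_\infty \neq 0$ follows because $f_\infty^{\sigma_j} \to \id$ would be the only degenerate option, which is excluded by the spectral normalization of $f_\infty$ (its top eigenvalue is pinned, so $f_\infty^{\sigma_j}$ cannot converge to a multiple of the identity in a way that kills the limit); this is exactly the passage recorded on the relevant pages of \cite{LubkeTeleman1995}, and the $\alpha$-weighted estimate above is what makes the argument go through verbatim. \emph{The main obstacle} I anticipate is precisely establishing that the weak limits $f_\infty$ and $\tilde f_\infty$ are nonzero: the a priori bounds only control norms from above, and ruling out collapse requires the interplay between the eigenvalue normalization (which gives a pointwise upper bound of $1$ and a matching lower control), the Rellich compactness promoting weak to strong $L^2$-convergence, and lower semicontinuity of the $L^2_1$-norm. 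Everything else is a routine $\alpha$-weighted transcription of \cite{LubkeTeleman1995}, Proposition 3.4.6 (i)--(ii).
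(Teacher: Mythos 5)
Your overall architecture matches the paper's proof: extract the subsequence from the $\limsup$, normalize by $\rho(\varepsilon_i)$, apply Lemma \ref{lemma:termsestimate}~(2) with $\sigma=1$, integrate using the vanishing of $\int_M P^\alpha(\tr f_\varepsilon)\, d\vol_g$ (Gauduchon, equivalently Lemma \ref{lemma:PalphaonCinfty}~(2)) and Lemma \ref{lemma:mepsilonbound}~(2), and use $f_i \varleq \id_V$ to pass from $\left|f_i^{-1/2}\partial_{\pm,0}f_i\right|$ to $\left|\partial_{\pm,0}f_i\right|$ and get the uniform $L^2_1$ bound. However, there is a genuine gap at exactly the point you yourself flag as delicate: the non-vanishing of $f_\infty$. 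You assert that ``the normalization forces max eigenvalue $=1$ pointwise for each $f_i$, so $\norm{f_i}_{L^2}$ stays bounded away from $0$.'' This is false: $\rho(\varepsilon)=e^{-M_\varepsilon}$ with $M_\varepsilon=\max_{x\in M}\lambda(\varepsilon,x)$ is a \emph{global} normalization, so the top eigenvalue of $f_i$ equals $1$ only at the point(s) where that maximum is attained and can be arbitrarily small elsewhere; a priori $f_i$ could concentrate near a single point and $\norm{f_i}_{L^2}\to 0$. What rules this out in the paper (Step I) is a reverse estimate your proposal never invokes: from Lemma \ref{lemma:termsestimate}~(2) and Lemma \ref{lemma:mepsilonbound}~(2) one obtains the pointwise differential inequality $P^\alpha(\tr f_\varepsilon) \varleq C \tr f_\varepsilon$, and Moser iteration for the elliptic operator $P^\alpha$ then gives $\norm{\tr f_\varepsilon}_{L^\infty} \varleq C \norm{\tr f_\varepsilon}_{L^2}$. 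Only after this does the correct consequence of the normalization, namely $\norm{f_i}_{L^\infty}\vargeq 1$, yield the lower bound $\norm{f_i}_{L^2}\vargeq C^{-1}$, which then transfers to $f_\infty$ via Rellich and strong $L^2$ convergence, as you say. Without this sup-by-$L^2$ estimate your lower bound has no justification.

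A second, smaller misstep is in part (2): you propose to apply Lemma \ref{lemma:termsestimate}~(2) ``to $f_\infty^{\sigma}$,'' but that lemma requires $L_\varepsilon(f)=0$, and $f_\infty$ is only a (possibly degenerate) weak $L^2_1$ limit, not a solution of any equation; it need not be positive definite or smooth. The paper's route is to apply the uniform-in-$\sigma$ estimates to $f_i^{\sigma} = \left(\rho(\varepsilon_i)f_{\varepsilon_i}\right)^{\sigma}$, where the equation does hold, and pass to the limit in $i$ first (identifying the weak limit with $f_\infty^{\sigma}$ via a.e.\ convergence) before letting $\sigma_j \to 0$. Finally, your non-degeneracy argument for $\tilde f_\infty$ (``$f_\infty^{\sigma_j}\to\id$ would be the only degenerate option, excluded by the spectral normalization'') does not parse; the standard argument is that the eigenvalues of $f_\infty$ lie in $[0,1]$, so $f_\infty^{\sigma}\vargeq f_\infty$ pointwise a.e., and since the positive cone is convex and closed, hence weakly closed, any weak limit satisfies $\tilde f_\infty \vargeq f_\infty \neq 0$.
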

\begin{proof}
As in the proof of Proposition $3.4.6$ (i) in \cite{LubkeTeleman1995}, the key is showing that $\rho(\varepsilon) f_\varepsilon$ is bounded in $L^2_1$, 
which leads to statement $(1)$. Statement $(2)$ can then be proven using the same arguments, applied to $\left(\rho(\varepsilon)f_\varepsilon\right)^\sigma$. We do it in two steps.

 {\bf Step I}:
 We show that there exists $C > 0$ (independent of $\varepsilon$) such that
 $$C^{-1} \varleq \norm{\rho(\varepsilon)f_\varepsilon}_{L^2} \varleq C.$$
To do this, we apply Lemma \ref{lemma:termsestimate} $(2)$, with $\sigma = 1$, and get
 $$P^\alpha(\tr f_\varepsilon) + \varepsilon h_0(\log f_\varepsilon, f_\varepsilon) + \alpha \left|f_\varepsilon^{-\frac{1}{2}}\partial_{+,0} f_\varepsilon\right|^2 + (1-\alpha) \left|f_\varepsilon^{-\frac{1}{2}}\partial_{-,0}f_\varepsilon\right|^2 \varleq - h_0(K_\alpha^0, f_\varepsilon).$$
 It follows that
 $$P^\alpha(\tr f_\varepsilon) \varleq -h_0(\varepsilon \log f_\varepsilon + K_\alpha^0, f_\varepsilon) \varleq C \max_M(|\varepsilon \log f_\varepsilon| + |K_\alpha^0|) |f_\varepsilon| \varleq C |f_\varepsilon|,$$
 where the last inequality uses Lemma \ref{lemma:mepsilonbound} $(2)$.

 Since $f_\varepsilon \in \Herm^+(V, h_0)$, we have
 $$C_1^{-1} \tr f_\varepsilon \varleq |f| \varleq C_1 \tr f_\varepsilon,$$
 which implies that $P^\alpha(\tr f_\varepsilon) \varleq C \tr f_\varepsilon$.
 Moser's iteration implies that
 $$\norm{\tr f_\varepsilon}_{L^\infty} \varleq C \norm{\tr f_\varepsilon}_{L^2} \Longrightarrow \norm{\rho(\varepsilon)f_\varepsilon}_{L^\infty} \varleq C \norm{\rho(\varepsilon)f_\varepsilon}_{L^2}$$
 for some $C > 0$, independent of $\varepsilon$.

 By the definition of $\rho(\varepsilon)$, we get
 $$\norm{\rho(\varepsilon) f_\varepsilon}_{L^\infty} \vargeq 1 \Longrightarrow C^{-1} \varleq \norm{\rho(\varepsilon) f_\varepsilon}_{L^2}$$
  and, on the other hand,
 $$\norm{\rho(\varepsilon) f_\varepsilon}_{L^2} \varleq \Vol_g(M)^{\frac{1}{2}} \norm{\rho(\varepsilon)f_\varepsilon}_{L^\infty} \varleq \Vol_g(M)^{\frac{1}{2}} \norm{\id_V}_{L^\infty} \varleq C.$$

 {\bf Step II}: Let $\Cnabla{+}$ be the Chern connection for $\bar\partial_+$ defined with respect to $h_0$. We show that
  $$\norm{\Cnabla{+}(\rho(\varepsilon)f_\varepsilon)}_{L^2} \varleq C,$$
  which implies that $\rho(\varepsilon)f_\varepsilon$ is bounded in $L^2_1$.
  Indeed,
 \begin{equation*}
  \begin{split}
   & \norm{\Cnabla{+}(\rho(\varepsilon)f_\varepsilon)}_{L^2} = \int_M \left|\Cnabla{+}(\rho(\varepsilon)f_\varepsilon)\right|^2 d\vol_g = 2 \int_M \left|\partial_{+,0}(\rho(\varepsilon)f_\varepsilon)\right|^2 d\vol_g \\
   \varleq \, & 2 \int_M \left|\left(\rho(\varepsilon) f_\varepsilon\right)^{-\frac{1}{2}}\partial_{+,0}(\rho(\varepsilon)f_\varepsilon)\right|^2 d\vol_g \hspace{0.2in} (\text{ since } \rho(\varepsilon) f_\varepsilon \varleq \id_V) \\
   \varleq \, & 2 \rho(\varepsilon) \alpha^{-1} \int_M \left(\alpha \left|f_\varepsilon^{-\frac{1}{2}}\partial_{+,0} f_\varepsilon\right|^2 + (1-\alpha) \left|f_\varepsilon^{-\frac{1}{2}}\partial_{-,0}f_\varepsilon\right|^2 \right) d\vol_g \\
   \varleq \, & 2 \alpha^{-1} \int_M \left(-h_0(K_\alpha^0, \rho(\varepsilon)f_\varepsilon) - \rho(\varepsilon)P^\alpha(\tr f_\varepsilon) - h_0(\varepsilon \log f_\varepsilon, \rho(\varepsilon) f_\varepsilon)\right) d\vol_g \\
   \varleq \, & C.
  \end{split}
 \end{equation*}
 In the last inequality, the first and third terms follow from Step I and Lemma \ref{lemma:mepsilonbound}. Moreover, the second term vanishes because $g$ is Gauduchon with respect to both $I_\pm$:
 $$\int_M P^\alpha(\tr f_\varepsilon) d\vol_g = \alpha \int_M P_+(\tr f_\varepsilon) \omega_+^n + (1-\alpha) \int_M P_-(\tr f_\varepsilon) \omega_-^n = 0.$$
 The non-vanishing of $f_\infty$ in $(1)$ follows from the lower bound in Step I.
\end{proof}

\noindent
{\em Proof of Proposition \ref{prop:destabilizing}.}
We follow the proof of Proposition 3.4.1 in \cite{LubkeTeleman1995}, pages $86$ -- $90$.
From Proposition \ref{prop:weaklyconvergence}, the following defines a projection over a subset $W \subset M$ of full measure:
$$\pi = \id_V - \lim\limits_{\sigma_j \to 0}\left(\lim\limits_{i \to \infty} f_i\right)^{\sigma_j} \in L^1(\Herm(V, h_0))$$
The convergence is pointwise in $W$ because weakly convergence in $L^2_1$ implies almost everywhere pointwise convergence. We then need to show that
$$(\id_V - \pi) \circ \bar\partial_\pm(\pi) = 0$$
in $L^1$.
As in \cite{LubkeTeleman1995}, we have
$$\left|(\id_V - \pi) \circ \bar\partial_\pm(\pi)\right| = \left| \bar\partial_\pm(\id_V - \pi) \circ \pi\right| = \left| \left(\bar\partial_\pm(\id_V - \pi) \circ \pi\right)^*\right| = \left| \pi \circ \bar\partial_{\pm,0}(\id_V - \pi)\right|$$
and it suffices to show that
\begin{equation}\label{eq:projectionL2vanish}
 \norm{\pi\circ \bar\partial_{\pm,0}(\id_V - \pi)}_{L^2} = 0.
\end{equation}
Since $f_i \varleq \id_V$, that is, all eigenvalues of $f_i$ are in $(0,1]$, the following inequality holds
$$0 \varleq \frac{a + \frac{b}{2}}{a}\left(\id_V - f_i^a\right) \varleq f_i^{-\frac{b}{2}}$$
as in \cite{LubkeTeleman1995}.
Using Lemma \ref{lemma:termsestimate} $(2)$, we have
\begin{equation*}
 \begin{split}
  & \int_M\left(\alpha\left|(\id_V - f_i^a) \circ \partial_+(f_i^b)\right|^2 + (1-\alpha) \left|(\id_V - f_i^a) \circ \partial_-(f_i^b)\right|^2 \right) d\vol_g \\
  \varleq \, & \frac{a}{a+\frac{b}{2}} \int_M\left( \alpha\left|f_i^{-\frac{b}{2}} \circ \partial_+(f_i^b)\right|^2 + (1-\alpha)\left|f_i^{-\frac{b}{2}}\circ \partial_-(f_i^b)\right|^2\right)d\vol_g  \\
  \varleq \, & \frac{a}{a+\frac{b}{2}} \int_M \left(-h_0(K_\alpha^0, f_i) - P^\alpha(\tr f_i) - h_0(\varepsilon_i \log f_{\varepsilon_i}, f_i)\right) d\vol_g \\
   \varleq \, & \frac{a}{a+\frac{b}{2}} C  \hspace{0.5in} (\text{as in Step II of Proposition \ref{prop:weaklyconvergence}}).
 \end{split}
\end{equation*}
Taking limits in the order $\lim\limits_{b \to 0} \lim\limits_{a \to 0} \lim\limits_{i \to \infty}$ on both sides of the above inequality, we get
$$\int_M\left(\alpha\left|(\id_V - \pi) \circ \partial_+(\pi)\right|^2 + (1-\alpha) \left|(\id_V - \pi) \circ \partial_-(\pi)\right|^2 \right) d\vol_g = 0,$$
which implies equation \eqref{eq:projectionL2vanish}. Hence, $\pi$ represents a coherent subsheaf $\mc F = (\mc F_+, \mc F_-)$ of $(V, \bar\partial_+, \bar\partial_-)$.
In fact, we have
$$0 < \rk(\mc F) := \rk(\mc F_\pm) < \rk(V),$$
so that $\mc F$ is a proper coherent subsheaf of $(V, \bar\partial_+, \bar\partial_-)$; this is proven the same way as Corollary 3.4.7 in \cite{LubkeTeleman1995}.
To show that $\mc F$ destabilizes $(V, \bar\partial_+, \bar\partial_-)$, that is,
$$\mu_\alpha(\mc F) \vargeq \mu_\alpha(V),$$
we carry out the proof of Proposition 3.4.8 in \cite{LubkeTeleman1995} verbatim, using the corresponding fact that
$$\mu^\alpha(\mc F) = \mu^\alpha(V) + \frac{(n-1)!}{2\pi \rk \mc F} \int_{W} \left(\tr(K_\alpha^0\circ \pi) - \alpha\left|\partial_+(\pi)\right|^2 - (1-\alpha)\left|\partial_-(\pi)\right|^2\right) d\vol_g.$$

\qed


\begin{thebibliography}{99}

\bibitem{Braam-Hurtubise}
P. J. Braam and J. Hurtubise, Instantons on Hopf surfaces and monopoles on solid tori, {\em J. Reine Angew.Math.} {\bf 400} (1989), 146?-172.

\bibitem{Buchdahl}
Nicholas P. Buchdahl, Hermitian-Einstein connections and stable vector bundles over compact complex surfaces, {\em Math. Ann.} {\bf 280} (1988), 625Ð648.

\bibitem{Donaldson1983}
 Simon~K.~Donaldson, A new proof of a theorem of Narasimhan and Seshadri, {\em J. Differential Geom.} {\bf 18} (1983), no.~2, 269--277.

\bibitem{Donaldson1985}
 Simon~K.~Donaldson, Anti self-dual Yang- Mills connections over complex algebraic surfaces and stable vector bundles, {\em Proc. London Math. Soc.} (3) {\bf 50} (1985), no.~1, 1--26.

 \bibitem{Donaldson1987}
 Simon~K.~Donaldson, Infinite determinants, stable bundles and curvature, {\em Duke Math. J.} {\bf 54} (1987), no.~1, 231--247.

\bibitem{Friedman}
R. Friedman, Algebraic surfaces and holomorphic vector bundles, {\em Universitext}, Springer-Verlag, New York, 1998.

 \bibitem{GilbargTrudinger1998}
  David~Gilbarg and Neil~S.~Trudinger, Elliptic partial differential equations of second order, reprint of the 1998 edition. {\it Classics in Mathematics, Springer-Verlag, Berlin}, 2001. 
 %
 \bibitem{GatesHullRocek}
  S.~James~Gates, Jr., Chris~M.~Hull, and Martin~Ro\v cek, Twisted multiplets and new supersymmetric nonlinear $\sigma$-models, {\it Nuclear Phys. B} {\bf 248} (1984), no. 1, 157 -- 186.
 %
 \bibitem{Gualtieri04}
  Marco~Gualtieri, Generalized complex geometry, {\it DPhil thesis, University of Oxford}, 2004. 
 %
 \bibitem{Gualtieri0710}
  Marco~Gualtieri, Branes on Poisson varieties, {\it The many facets of geometry}, 368 -- 394, {\it Oxford Univ. Press, Oxford}, 2010. 
 %
 \bibitem{Gualtieri0703}
  Marco~Gualtieri, Generalized complex geometry, {\it Ann. of Math} (2) {\bf 174} (2011), no. 1, 75 -- 123. 
 %
 \bibitem{Gualtieri10}
  Marco~Gualtieri, Generalized K\"ahler geometry, {\it Comm. Math. Phys.} {\bf 331} (2014), no. 1, 297 -- 331. 
  %
  \bibitem{Gualtieri-Hu-Moraru}
  Marco~Gualtieri, Shengda~Hu and Ruxandra~Moraru, Generalized holomorphic bundles on Hopf surfaces, work in progress.
 %
 \bibitem{Hitchin87}
  Nigel~Hitchin, The self-duality equations on a Riemann surface, {\it Proc. London Math. Soc.} (3) {\bf 55} (1987), no. 1, 59 -- 126.
 %
 \bibitem{Hitchin11}
  Nigel~Hitchin, Generalized holomorphic bundles and the $B$-field action, {\it J. Geom. and Phys.} {\bf 61} (2011), no. 1, 352 -- 362.
  %
  \bibitem{HuPre}
  Shengda~Hu, On generalized K\"ahler geometry on a compact Lie group, {\it in preparation}.
  %
  \bibitem{HuMoraruSeyyedali}
  Shengda~Hu, Ruxandra~Moraru and Reza Seyyedali, Moduli spaces of $\alpha$-stable generalized holomorphic bundles on generalized K\"ahler manifolds, in preparation.
  %
  \bibitem{Li-Yau}
 Li J., Yau S.-T., Hermitian Yang-Mills connections on non-KŠhler manifolds, In: {\em Mathematical aspects of string theory}, World Scientific, 1987.
 %
 \bibitem{LubkeTeleman1995}
  Martin~L\"ubke and Andrei~Teleman, The Kobayashi-Hitchin correspondence, {\it World Scientific Publishing Co., Inc., River Edge, NJ}, 1995.
 %
 \bibitem{Kobayashi87}
  Shoshichi~Kobayashi, Differential geometry of complex vector bundles, {\it Publications of the Mathematical Society of Japan, 15.} Kan\^ o Memorial Lectures, 5. {\it Princeton University Press, Princeton, NJ; Iwanami Shoten, Tokyo}, 1987.
 %
 \bibitem{Moraru}
 R. Moraru, Integrable systems associated to a Hopf surface, {\em Canad. J. Math.} {\bf 55} (3) (2003), 609-635.
 %
 \bibitem{Mumford63}
  David~Mumford, Projective invariants of projective structures and applications, 1963 {\it Proc. Internat. Congr. Mathematicians (Stockholm, 1962), Inst. Mittag-Leffler, Djursholm} pp. 526 -- 530.
 %
 \bibitem{NarasimhanSeshadri}
 Mudumbai~S.~Narasimhan\ and\ Conjeeveram~S.~Seshadri, Stable and unitary vector bundles on a compact Riemann surface, {\em Ann. of Math.} (2) {\bf 82} (1965), 540--567.
 %
 \bibitem{Rayan10}
Steven Rayan, Co-Higgs bundles on $\mathbb{P}^1$, {\em New York J. Math.} {\bf 19} (2013), 925--945; arXiv:1010.2526.
%
\bibitem{Rayan11}
Steven Rayan, {\em Geometry of co-Higgs bundles}, {\em DPhil thesis, Oxford University}, 2011.
%
\bibitem{Rayan13}
Steven Rayan, Constructing co-Higgs bundles on $\mathbb{CP}^2$,  arXiv:1309.7014.
%
 \bibitem{Simpson88}
 Carlos~T.~Simpson, Constructing variations of Hodge structure using Yang-Mills theory and applications to uniformization, {\it J. Amer. Math. Soc.} {\bf 1} (1988), no. 4, 867 -- 918.
 %
 \bibitem{UhlenbeckYau86}
  Karen~Uhlenbeck and Shing-Tung~Yau, On the existence of Hermitian-Yang-Mills connections in stable vector bundles, {\it Frontiers of the mathematical sciences: 1985} (New York, 1985), {\it Comm. Pure Appl. Math.} {\bf 39} (1986), no. S, suppl. S257 -- S293.
  %
 \bibitem{UhlenbeckYau89}
  Karen~Uhlenbeck and Shing-Tung~Yau, A note on our previous paper: On the existence of Hermitian-Yang-Mills connections in stable vector bundles, 
  {\it Comm. Pure Appl. Math.} {\bf XLII} (1989), no. S, 703--707.
  %
  \bibitem{Vicente-Colmenares}
 Alejandra~Vicente-Colmenares, Semi-stable rank-2 co-Higgs bundles on Hirzebruch surfaces and their moduli spaces, in preparation.
\end{thebibliography}
\end{document}